\newcounter{comments}
\newenvironment{displaycomment}{\begin{list}{}{\rightmargin=1cm\leftmargin=1cm}\item\sf\begin{small}}{\end{small}\end{list}}
 \numberwithin{equation}{subsection}
  \definecolor{MyBlue}{RGB}{25,106,180}
\def\quot#1{``#1''}
\theoremstyle{nonumberplain}  
\newtheorem{proof}{Proof} 
\theoremstyle{plain}  
\newtheorem{proposition}{Proposition}[subsection]  
\newtheorem{corollary}[proposition]{Corollary}  
\newtheorem{lemma}[proposition]{Lemma}  
\newtheorem{theorem}[proposition]{Theorem}
\newtheorem{maintheorem}{Theorem}[section]   
\newtheorem{maincorollary}[maintheorem]{Corollary}
\newtheorem{remark}[proposition]{Remark}
\newtheorem{example}[proposition]{Example}  
\newtheorem{definition}[proposition]{Definition}
\def\mathscr#1{\EuScript{#1}}
\theoremstyle{nonumberplain}
\DeclareMathOperator*{\Per}{Per}
\DeclareMathOperator*{\sskew}{skew}
\newcommand{\Skew}{\mathrm{Skew}}
\newcommand{\Alt}{\mathrm{Alt}}
\DeclareMathOperator*{\inv}{inv}
\newcommand*{\res}{\operatorname{res}}
\newcommand{\R}{\mathbb{R}}
\newcommand{\N}{\mathbb{N}}
\newcommand{\Z}{\mathbb{Z}}
\newcommand{\dd}{\mathrm{d}}
\newcommand{\pr}{\mathrm{pr}}
\newcommand{\Spin}{\mathrm{Spin}}
\newcommand{\SO}{\mathrm{SO}}
\newcommand{\id}{\mathrm{id}}
\newcommand{\dis}{_{\mathrm{dis}}}
\newcommand{\Hom}{\mathrm{Hom}}
\newcommand{\CExt}{\mathrm{c}\mathscr{E}\mathrm{xt}}
\newcommand{\DCCExt}{\mathrm{dc}\text{-}\mathrm{c}\mathscr{E}\mathrm{xt}}
\def\TwoGroupFusFac#1#2{\mathcal{G}(#1, #2)}
\def\XExt#1{\mathscr{X}\text{-}\mathrm{c}\mathscr{E}\mathrm{xt}(#1)}
\newcommand{\Lietwogroups}{\mathscr{L}\text{ie-2-}\mathscr{G}\text{rp}}
\newcommand{\XMod}{\mathscr{X}\text{-}\mathscr{M}\mathrm{od}}
\newcommand{\U}{\mathrm{U}}
\newcommand{\ev}{\mathrm{ev}}
\newcommand{\const}{\texttt{const}}
\crefname{equation}{\unskip}{\unskip}
\def\BCSS{^{^{_\mathrm{BCSS}}}}
\def\Waldorf{^{^{_\mathrm{W}}}}
\def\diff{^\mathrm{dflg}}
\def\rep{\mathrm{rep}}
\def\si{_{\mathrm{si}}}
\title{Lie 2-groups from loop group extensions}
\author{Matthias Ludewig and Konrad Waldorf}
\date{}
\begin{document}

\sloppy
\maketitle

\begin{abstract}
\noindent
We give a very simple construction of the string 2-group as a strict Fr\'echet Lie 2-group.
The corresponding crossed module is defined using the conjugation action of the loop group on its central extension, which drastically simplifies several constructions previously given in the literature.
More generally, we construct strict 2-group extensions for a Lie group from a central extension of its based loop group, under the assumption that this central extension is disjoint commutative.
 We show in particular that this condition is automatic in the case that the Lie group is semisimple and simply connected.
\end{abstract}

\tableofcontents

\section{Introduction}

\numberwithin{equation}{section}

In the seminal paper \cite{BCSS} by Baez, Crans, Stevenson and Schreiber, a certain Fr\'echet Lie 2-group extension of a  Lie group $G$ of Cartan type (i.e., compact, connected, simple, simply connected) was constructed, using a particular presentation of the universal central extension of the loop group $LG$. For $G=\Spin(n)$, their construction realizes a model for the string 2-group. 

In an attempt to generalize this construction, the second-named author described in \cite{WaldorfStringGroupModels,WaldorfString, WaldorfTransgressive}  another, diffeological 2-group extension of an arbitrary Lie group $G$, using an arbitrary central extension of $LG$ equipped with a certain additional structure -- a multiplicative fusion product. 
If $G$ is of Cartan type, such a central extension can be provided canonically, and one can prove abstractly that the corresponding 2-group is weakly equivalent to the one of Baez et al.

The purpose of the present paper is to (drastically) simplify and to unify both constructions. 
For this purpose, we study in the first part of this paper, \cref{section-central-extensions}, central extensions of loop groups and of groups of paths, in the category of Fr\'echet Lie groups. We identify a  property of central extensions of a loop group, \emph{disjoint commutativity}, as  crucial for the construction of 2-groups. A central extension 
\begin{equation*}
1\to \U(1) \to \widetilde{LG} \to LG \to 1
\end{equation*}  
is disjoint commutative if elements $\Phi, \Phi'\in\widetilde{LG}$ commute if they project to loops $\gamma, \gamma'\in LG$ with disjoint supports. Disjoint commutativity has been introduced in \cite{WaldorfTransgressive} as a property of transgressive central extension, and it is relevant for the theory of nets of operator algebras \cite{Gabbiani1993}. 
Our first result is the following (see \cref{CorollarySemisimpleSimplyConnectedDisjointCommutative} and, for a more general statement, \cref{ThmModifiedProduct}). 

\begin{maintheorem}
If $G$ is semisimple and simply connected, then all central extensions of $LG$ are disjoint commutative.
\end{maintheorem}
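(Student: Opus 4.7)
The plan is to reduce disjoint commutativity of a central extension $1\to\U(1)\to\widetilde{LG}\to LG\to 1$ to the vanishing of a commutator pairing, and then to kill that pairing using semisimplicity. Given loops $\gamma_1,\gamma_2\in LG$ with disjoint supports, I choose disjoint open subsets $V_1,V_2\subseteq S^1$ with $\supp\gamma_i\subseteq V_i$, and let $L_{V_i}G\subseteq LG$ denote the Fr\'echet Lie subgroup of loops supported in $V_i$. Since any element of $L_{V_1}G$ commutes in $LG$ with any element of $L_{V_2}G$, for any lifts $\Phi_i\in\widetilde{LG}$ the commutator $[\Phi_1,\Phi_2]$ lies in the central $\U(1)$ and is independent of the chosen lifts. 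This defines a continuous bi-multiplicative pairing
$$c\colon L_{V_1}G\times L_{V_2}G\longrightarrow \U(1),$$
and disjoint commutativity of $\widetilde{LG}$ is equivalent to $c\equiv 1$ for all admissible $V_1,V_2$.

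I would prove the vanishing $c\equiv 1$ by showing that for every open arc $I\subseteq S^1$, any continuous group homomorphism $L_IG\to\U(1)$ is trivial, and then applying this to $\gamma_2\mapsto c(\gamma_1,\gamma_2)$ for fixed $\gamma_1$. Two properties of $L_IG$ are needed. First, $L_IG$ is connected -- in fact simply connected -- because $L_IG\simeq\Omega G$ and $\pi_k\Omega G=\pi_{k+1}G$, with $\pi_1 G=0$ by hypothesis and $\pi_2 G=0$ for any Lie group. Second, the Lie algebra $L_I\mathfrak{g}=C^\infty_c(I,\mathfrak{g})$ is perfect: semisimplicity gives $[\mathfrak{g},\mathfrak{g}]=\mathfrak{g}$, so there exist finitely many $a_k,b_k\in\mathfrak{g}$ and linear functionals $\lambda_k\in\mathfrak{g}^*$ with $v=\sum_k\lambda_k(v)[a_k,b_k]$ for every $v\in\mathfrak{g}$; then for $\xi\in C^\infty_c(I,\mathfrak{g})$ and any bump $\psi\in C^\infty_c(I)$ equal to $1$ on $\supp\xi$ the identity
$$\xi \;=\; \sum_k\bigl[\,(\lambda_k\circ\xi)\,a_k,\;\psi\,b_k\,\bigr]$$
exhibits $\xi$ as a finite sum of brackets of compactly supported fields. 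For a general open $V\subseteq S^1$, decompose it into arcs; then $L_VG$ is a product of groups of the form $L_IG$ and the same two properties are inherited. Combining them, the abelianization of $L_VG$ as a connected Fr\'echet Lie group has Lie algebra $L_V\mathfrak{g}/[L_V\mathfrak{g},L_V\mathfrak{g}]=0$, hence is discrete and therefore trivial, so every continuous homomorphism $L_VG\to\U(1)$ vanishes. Applying this to $c(\gamma_1,-)$ yields $c\equiv 1$ and proves the theorem.

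The main obstacle I anticipate is the final Fr\'echet-analytic step: promoting the algebraic perfection of $L_I\mathfrak{g}$ to an actual triviality statement for the abelianization of $L_IG$, since in the infinite-dimensional setting the correspondence between Lie subalgebras and closed normal subgroups is not automatic. A clean workaround is to exploit that a continuous homomorphism $L_IG\to\U(1)$ becomes smooth after restriction to each one-parameter subgroup $t\mapsto\exp(t\xi)$; its derivative at the identity is then a continuous Lie-algebra homomorphism $L_I\mathfrak{g}\to i\R$, which vanishes on the perfect Lie algebra, so the homomorphism is trivial on the image of $\exp$, and connectedness of $L_IG$ propagates this triviality to the whole group. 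The remaining ingredients -- bi-multiplicativity and continuity of $c$, and choice of disjoint open neighbourhoods of disjoint compact supports -- are routine.
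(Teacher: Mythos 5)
Your overall strategy is the paper's: package the commutators of lifts into a central, bi-multiplicative $\U(1)$-valued pairing and kill it by showing that the relevant loop groups admit no non-trivial homomorphisms to abelian Lie groups (perfect Lie algebra, connectedness from $\pi_1(G)=0$, and regularity/one-parameter subgroups to pass from the Lie algebra to the group). There is, however, one genuine gap, and it sits exactly where the paper has to do real work. You identify the Lie algebra of $L_IG$ with $C^\infty_c(I,\mathfrak{g})$ and prove perfectness by multiplying with a bump function $\psi\in C^\infty_c(I)$ equal to $1$ on $\supp\xi$. But $L_IG$ is defined as the group of loops that equal $e$ \emph{outside} $I$; its elements (and those of its Lie algebra) are merely flat at the endpoints of $I$, and their support may be all of $\overline{I}$, in which case no such compactly supported $\psi$ exists. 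The same oversight infects your reduction step: disjoint commutativity requires that $\widetilde{L_IG}$ and $\widetilde{L_JG}$ commute for \emph{disjoint index sets} $I,J$ --- e.g.\ $I=(0,\pi)$ and $J=(\pi,2\pi)$ --- where the supports of the two loops may meet at $\{0,\pi\}$ and therefore cannot be separated by disjoint open neighbourhoods. That boundary-touching case is precisely the one needed later for the Peiffer identity, so your reduction to ``loops with disjoint supports'' proves a strictly weaker statement. (The pairing is still defined in that case, since loops in $L_IG$ and $L_JG$ with $I\cap J=\emptyset$ commute pointwise.)

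Both defects are repairable, but not silently. For the Lie-algebra step the paper proves \cref{FlatPathLemma} --- every smooth function flat at an endpoint factors as a product of two smooth functions flat there --- precisely so that writing $f_k=g_kh_k$ exhibits an arbitrary flat element as a sum of brackets \emph{within} the flat path algebra; this yields \cref{TheoremSemisimpleNoHoms}. Alternatively, you could keep your bump-function computation and add that $C^\infty_c(I,\mathfrak{g})$ is dense in the Lie algebra of $L_IG$ and that the derivative of your homomorphism is a continuous functional vanishing on brackets, hence on a dense perfect subalgebra, hence everywhere; but this must be said, since as written your perfectness claim is established only for a proper subalgebra. The remaining ingredients --- centrality and bi-multiplicativity of the commutator pairing, connectedness of $L_IG$ from $\pi_1(G)=\pi_2(G)=0$, and the passage from the Lie algebra to the connected group --- match the paper's argument.
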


The relevance of disjoint commutativity for Lie 2-groups lies in the construction of crossed module actions. 
We denote by $P_eG$ the Fr\'echet Lie group of paths in $G$ that start at the identity element $e$, and all whose derivatives at both end points vanish. 
We denote by $\widetilde{\Omega_{(0,\pi)} G}$ the restriction of $\widetilde{L G}$ to the group $\Omega_{(0,\pi)}G$ of those loops whose support is in their first half $(0,\pi) \subset S^1$. 
Then, we consider the Lie group homomorphism
\begin{equation*}
t : \widetilde{\Omega_{(0,\pi)} G} \to P_eG
\end{equation*}  
thats projects to the first half of the base loop, considered as a (closed) path. In order to turn the homomorphism $t$ into a crossed module, it remains to provide a  crossed module action $\alpha$ of $P_eG$ on $\widetilde{\Omega_{(0,\pi)} G}$.
In the above-mentioned paper \cite{BCSS} by Baez et al., such an action is constructed (in a slightly different setting) using Lie-algebraic methods and particularities of a specific model of $\widetilde{\Omega G}$. 
In the second above-mentioned approach  \cite{WaldorfStringGroupModels,WaldorfTransgressive}, a crossed module action is constructed using the given fusion product.

In our setting, the required  action $\alpha$ is both simple and canonical: a path $\gamma\in P_eG$ is first ,,doubled``  to a thin loop in $\Omega G$, lifted to $\widetilde{\Omega G}$, and then acts by conjugation on  $\Phi \in \widetilde{\Omega_{(0,\pi)} G}$, see \cref{SectionCrossedModulesFromCExt}. In general, this canonical action $\alpha$ will not be a crossed module action, as it does not satisfy the so-called Peiffer identity. One of our main insights is that this problem is resolved when $\widetilde{\Omega G}$ is disjoint commutative, see \cref{ThmUniqueAction}.

\begin{maintheorem}
\label{main-one}
If $\widetilde{\Omega G}$ is a disjoint commutative central extension of $\Omega G$, then  the canonical action $\alpha$ turns $t: \widetilde{\Omega_{(0, \pi)} G} \to P_e G$ into a central  crossed module. 
Moreover, if $G$ is semisimple, then $\alpha$  is the only such action. 
\end{maintheorem}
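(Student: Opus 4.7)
The plan is to verify the four defining properties of a central crossed module for the pair $(t, \alpha)$ and then to treat uniqueness separately. Specifically, I need to check: (i) $\alpha$ is a well-defined smooth group homomorphism $P_eG \to \Aut(\widetilde{\Omega_{(0,\pi)} G})$; (ii) the equivariance axiom $t(\alpha_\gamma(\Phi)) = \gamma \cdot t(\Phi) \cdot \gamma^{-1}$; (iii) centrality of the kernel of $t$; and (iv) the Peiffer identity $\alpha_{t(\Phi)}(\Phi') = \Phi\,\Phi'\,\Phi^{-1}$. The main obstacle is (iv), which is exactly where disjoint commutativity enters.

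Items (i)--(iii) are essentially formal. The doubling map $d : P_eG \to \Omega G$, given by $d(\gamma)(t) = \gamma(t)$ on $[0,\pi]$ and $d(\gamma)(t) = \gamma(2\pi - t)$ on $[\pi, 2\pi]$, is smooth and, since multiplication in $\Omega G$ is pointwise, a group homomorphism. Choosing any lift $\widetilde{d(\gamma)} \in \widetilde{\Omega G}$ and setting $\alpha_\gamma(\Phi) := \widetilde{d(\gamma)}\, \Phi\, \widetilde{d(\gamma)}^{-1}$, the $\U(1)$-ambiguity of the lift cancels, so $\alpha_\gamma$ is well-defined; since the product of two lifts is again a lift, $\gamma \mapsto \alpha_\gamma$ is a group homomorphism, and it is smooth because the central extension admits local smooth sections. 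Equivariance (ii) follows by projecting to $\Omega G$ and restricting to the first half: $d(\gamma)$ agrees with $\gamma$ on $[0,\pi]$, so the projection of $\alpha_\gamma(\Phi)$ restricted to $[0,\pi]$ is $\gamma \cdot t(\Phi) \cdot \gamma^{-1}$. Centrality (iii) is immediate, since conjugation fixes the central $\U(1)$.

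The heart of the argument is the Peiffer identity. Fix $\Phi, \Phi' \in \widetilde{\Omega_{(0,\pi)} G}$ and let $\sigma \in \Omega_{(0,\pi)} G$ denote the underlying loop of $\Phi$. Since $\sigma$ vanishes outside $(0,\pi)$, the doubled loop $d(t(\Phi))$ agrees with $\sigma$ on $[0,\pi]$ and with the loop $\bar\sigma$ defined by $\bar\sigma(t) = \sigma(2\pi - t)$ on $[\pi, 2\pi]$; by pointwise multiplication in $\Omega G$ this yields the factorization
\begin{equation*}
d(t(\Phi)) = \sigma \cdot \bar\sigma,
\end{equation*}
where $\sigma$ and $\bar\sigma$ have disjoint supports $(0,\pi)$ and $(\pi, 2\pi)$. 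Picking any lift $\widetilde{\bar\sigma} \in \widetilde{\Omega G}$, the product $\Phi \cdot \widetilde{\bar\sigma}$ is a lift of $d(t(\Phi))$, so
\begin{equation*}
\alpha_{t(\Phi)}(\Phi') = \Phi\, \widetilde{\bar\sigma}\, \Phi'\, \widetilde{\bar\sigma}^{-1}\, \Phi^{-1}.
\end{equation*}
Now $\widetilde{\bar\sigma}$ and $\Phi'$ project to loops with disjoint supports $(\pi, 2\pi)$ and $(0,\pi)$ respectively, so disjoint commutativity of $\widetilde{\Omega G}$ gives $\widetilde{\bar\sigma}\, \Phi' = \Phi'\, \widetilde{\bar\sigma}$. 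The two copies of $\widetilde{\bar\sigma}$ cancel, leaving $\Phi\,\Phi'\,\Phi^{-1}$, as required.

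For uniqueness when $G$ is semisimple, suppose $\alpha'$ is another crossed module action compatible with the same $t$. For each $\gamma \in P_eG$, the automorphism $\alpha'_\gamma \circ \alpha_\gamma^{-1}$ of $\widetilde{\Omega_{(0,\pi)} G}$ is the identity on the central $\U(1)$ and projects to the identity on the quotient $\Omega_{(0,\pi)} G$; hence it has the form $\Phi \mapsto \chi_\gamma(\sigma_\Phi) \cdot \Phi$, where $\sigma_\Phi$ is the underlying loop of $\Phi$ and $\chi_\gamma : \Omega_{(0,\pi)} G \to \U(1)$ is a continuous character. Since $\mathfrak{g}$ is perfect for $G$ semisimple, so is the Fr\'echet Lie algebra of $\Omega_{(0,\pi)} G$, whence this group admits no nontrivial continuous $\U(1)$-character and $\alpha' = \alpha$. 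The most delicate issue in the whole proof is verifying that the factorization $d(t(\Phi)) = \sigma \cdot \bar\sigma$ genuinely respects the smoothness constraints implicit in $\Omega G$ and $\widetilde{\Omega G}$ --- the loops are smooth precisely because elements of $\Omega_{(0,\pi)} G$ are flat at the endpoints of the support interval --- and ensuring that the Peiffer calculation is manifestly independent of the arbitrary choice of $\widetilde{\bar\sigma}$.
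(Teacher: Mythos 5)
Your proposal is correct and follows essentially the same route as the paper: the Peiffer identity is obtained by factoring a lift of the doubled loop $\gamma\cup\gamma$ as a product of $\Phi$ with an element of $\widetilde{\Omega_{(\pi,2\pi)}G}$ and invoking disjoint commutativity of $\widetilde{\Omega_{(0,\pi)}G}$ and $\widetilde{\Omega_{(\pi,2\pi)}G}$ (the paper's version inserts $\Psi^{-1}\Psi$ instead of naming $\bar\sigma$, but it is the identical computation), and uniqueness is deduced exactly as in the paper by showing the discrepancy $\alpha'_\gamma\alpha_\gamma^{-1}$ is given by a character $\Omega_{(0,\pi)}G\to\U(1)$, which vanishes by the perfectness/no-homomorphisms result for semisimple $G$.
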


We emphasize that \cref{main-one} provides a drastic simplification of the construction of 2-group extensions; in particular, for the construction of string 2-group models. Neither additional structure on the central extension is needed, nor any other special knowledge about its concrete model.

\medskip

We denote by $X(\widetilde{\Omega G})$ the crossed module of \cref{main-one}, and now consider the special case where $G$ is of Cartan type, and $\widetilde{\Omega G}$ has level $k\in \Z$. We denote by $X\BCSS(G, k)$ the  crossed module constructed by Baez et al.\ at the same level. In \cref{Comparison-with-BCSS} we construct a canonical, strict homomorphism 
\begin{equation}
\label{comparison-baez}
X(\widetilde{\Omega G}) \to X\BCSS(G, k)\text{,}
\end{equation}
of crossed modules of Fr\'echet Lie groups. 
On the other hand, we consider a disjoint commutative central extension $\widetilde{LG}$ with a fusion product $\lambda$, and  denote by $X\Waldorf(\widetilde{\Omega G}, \lambda)$ the diffeological crossed module corresponding to the diffeological 2-group of  \cite{WaldorfStringGroupModels,WaldorfString, WaldorfTransgressive}. 
Under the canonical inclusion  of Fr\'echet manifolds into diffeological spaces, we construct in \cref{diffeological-model} another, strict homomorphism
\begin{equation*}
X\Waldorf(\widetilde{\Omega G}, \lambda) \to X(\widetilde{\Omega G})\text{.}
\end{equation*}  
\cref{equivalence-to-BCSS,weak-equivalence-diffeological} prove the following. 

\begin{maintheorem}
\label{equivalence-baez-waldorf}
The homomorphisms \cref{comparison-baez} and \cref{comparison-baez} establish weak equivalences
\begin{equation*}
X\Waldorf(\widetilde{\Omega G}, \lambda) \cong X(\widetilde{\Omega G})
\quad\text{ and }\quad
X(\widetilde{\Omega G}) \cong X\BCSS(G, k)\text{.}
\end{equation*}
\end{maintheorem}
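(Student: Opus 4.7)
The plan is to invoke the standard criterion for a strict homomorphism of (central) crossed modules $(f_1, f_0) : (t : H \to G, \alpha) \to (t' : H' \to G', \alpha')$ of Fréchet or diffeological Lie groups to be a weak equivalence: it suffices that the induced maps on $\pi_1 := \ker(t)$ and $\pi_0 := G/t(H)$ be isomorphisms in the ambient category. The theorem thus splits into the two individual claims \cref{equivalence-to-BCSS} and \cref{weak-equivalence-diffeological}, which I treat by the same strategy.

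The first step is to compute $\pi_0$ and $\pi_1$ of the middle crossed module $X(\widetilde{\Omega G})$. The kernel of $t : \widetilde{\Omega_{(0, \pi)} G} \to P_eG$ is precisely the central $\U(1)$ of the extension, because $t$ covers the inclusion $\Omega_{(0,\pi)} G \hookrightarrow P_eG$, which is injective on the base. The image $t(\widetilde{\Omega_{(0,\pi)} G}) \subset P_eG$ consists exactly of those paths in $P_eG$ which are closed and supported in the first half of $S^1$; since $G$ is path-connected and the boundary conditions imposed on $P_eG$ permit an arbitrary smooth approach to any endpoint, endpoint evaluation descends to an isomorphism $\pi_0 = P_eG/t(\widetilde{\Omega_{(0,\pi)} G}) \cong G$ of Fréchet Lie groups. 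For $X\BCSS(G,k)$ and $X\Waldorf(\widetilde{\Omega G}, \lambda)$ the analogous computations, already implicit in \cite{BCSS} and \cite{WaldorfStringGroupModels,WaldorfTransgressive}, again yield $\pi_1 = \U(1)$ and $\pi_0 = G$.

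Next I would identify the induced maps on these invariants for the two comparison homomorphisms. For $X(\widetilde{\Omega G}) \to X\BCSS(G,k)$: since $G$ is of Cartan type, the level-$k$ central extension of $LG$ is unique up to canonical isomorphism, so the BCSS extension and ours are identified as central $\U(1)$-extensions; consequently the first homomorphism is the identity on $\pi_1 = \U(1)$, and, via endpoint evaluation, the identity on $\pi_0 = G$. For $X\Waldorf(\widetilde{\Omega G}, \lambda) \to X(\widetilde{\Omega G})$, coming from the canonical inclusion of Fréchet manifolds into diffeological spaces applied to (a restriction of) the same extension, the identifications on $\pi_1$ and $\pi_0$ are immediate from the construction.

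The main obstacle is not this bookkeeping but rather verifying that the two comparison maps are genuine \emph{strict} crossed-module homomorphisms, i.e.\ that they intertwine the respective actions. For the BCSS comparison one must match the Lie-algebraically defined BCSS action with the canonical conjugation action of \cref{SectionCrossedModulesFromCExt}; for the Waldorf comparison one must match the fusion-product-based action with the conjugation action. Here the uniqueness clause of \cref{main-one}, applicable since $G$ is semisimple, does the decisive work: pulling back the BCSS (resp.\ Waldorf) action along the comparison map produces an action on $\widetilde{\Omega_{(0,\pi)} G}$ that turns $t$ into a central crossed module, and uniqueness forces it to coincide with $\alpha$. Once the comparison maps are therefore known to respect $t$, action compatibility comes for free, and the two weak equivalences follow from the $\pi_0/\pi_1$-criterion above.
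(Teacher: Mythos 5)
Your overall architecture is sound and largely parallels the paper's: the theorem does split into the two comparisons, the comparison maps are strict homomorphisms, and your use of the uniqueness clause of \cref{ThmUniqueAction} to see that the BCSS action restricts to the canonical conjugation action is exactly how \cref{stricthomomorphismR} is proved (for the diffeological comparison the paper gets this even more cheaply, since both actions are literally conjugation by a lift of $\gamma\cup\gamma$). Your computations of $\ker(t)=\U(1)$ and $P_eG/t(\widetilde{\Omega_{(0,\pi)}G})\cong G$ are also correct.

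The gap is in the criterion you invoke for weak equivalence. In the Fr\'echet and diffeological settings, a strict homomorphism of crossed modules inducing bijections on $\pi_1=\ker(t)$ and $\pi_0=G/t(H)$ is \emph{not} automatically a weak equivalence: the correct notion (the paper uses reversible butterflies in the sense of Aldrovandi--Noohi) requires the NW--SE leg $H \to \widetilde{H}'\rtimes G \xrightarrow{j} G'$ of the associated butterfly to be short exact \emph{as a locally trivial principal bundle}, i.e.\ $j$ must admit smooth local sections. Group-theoretic exactness at the ends is precisely your $\pi_0/\pi_1$ condition, but the local-section property is an additional analytic requirement and is in fact the only genuinely nontrivial step of the paper's proof of \cref{equivalence-to-BCSS}: one must exhibit, smoothly and locally in $\gamma\in P_eG\BCSS$, a reparameterization $\varphi:[0,2\pi]\to[0,\pi]$ flat at its endpoints and a local lift $\Phi$ of $\gamma\cdot\rep(\gamma\circ\varphi)^{-1}$ to $\widetilde{\Omega G}\BCSS$. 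Your proposal never produces these data, so as written it establishes only that the comparison maps are weak equivalences of the underlying discrete $2$-groups. To close the gap you would either have to add this local-section construction, or justify a smooth version of your $\pi_0/\pi_1$ criterion in which the isomorphism on $\pi_0$ is required to be induced by a map with local sections --- at which point you are back to the butterfly argument. A second, smaller imprecision: the identification of $\widetilde{\Omega G}$ with the BCSS extension is not by appeal to uniqueness of the level-$k$ extension; in the paper $\widetilde{\Omega_{(0,\pi)}G}$ is \emph{defined} as the pullback $r^*\widetilde{\Omega G}\BCSS$, which is what makes the induced map on $\pi_1=\U(1)$ the identity by construction.
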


In particular, this shows that the two earlier constructions $X\Waldorf(\widetilde{\Omega G}, \lambda)$ and $X\BCSS(G, k)$ are canonically and strictly isomorphic, a fact that is very difficult to observe when only looking at these two 2-groups.   

\bigskip

Another aspect we investigate in this paper concerns the 2-groups associated to the crossed modules discussed above. As these two structures (2-groups and crossed modules)  are canonically equivalent, our crossed module $X(\widetilde{\Omega G})$ determines a Fr\'echet Lie 2-group $\mathcal{G}(X(\widetilde{\Omega G}))$, whose group of objects is $P_eG$, and whose group of morphisms is the semi-direct product
\begin{equation}
\label{SemiDirectProductIntro}
\widetilde{\Omega_{(0,\pi)}G} \rtimes_{\alpha} P_eG\text{.}
\end{equation}
The Fr\'echet Lie 2-group of Baez et al.\ has a similar structure. 
However, the diffeological construction of the second-named author results into a Lie 2-group whose group of morphisms is a subgroup of $\widetilde{\Omega G}$, and is hence ,,nicer``.  
It turns out that the missing ingredient to identify the semi-direct product \cref{SemiDirectProductIntro} with a subgroup of $\widetilde{\Omega G}$ is a homomorphism
\begin{equation*}
  i: P_e G \longrightarrow \widetilde{\Omega G}
\end{equation*}
such that $i(\gamma) \in \widetilde{\Omega G}$ lies over the thin loop corresponding to the path $\gamma$. 
Such a map was called \emph{fusion factorization} in \cite{KristelWaldorf1}.
Here we have the following result, see \cref{ThmExistenceFusionFactorization}.

\begin{maintheorem}
If $G$ is semisimple, fusion factorizations are unique. If $G$ is additionally simply connected, then fusion factorizations exist.
\end{maintheorem}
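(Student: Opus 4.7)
The plan is to prove uniqueness and existence separately, since uniqueness needs only semisimplicity, whereas existence additionally requires simple-connectedness in order to invoke the earlier Main Theorem on disjoint commutativity.

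For uniqueness, I would take two fusion factorizations $i_1, i_2 : P_eG \to \widetilde{\Omega G}$ and form $\chi(\gamma) := i_1(\gamma) \cdot i_2(\gamma)^{-1}$. Since both lifts cover the same thin loop, $\chi$ takes values in the central subgroup $\U(1) \subset \widetilde{\Omega G}$, and centrality forces $\chi$ to be a smooth Lie group homomorphism $P_eG \to \U(1)$. Two facts then give $\chi \equiv 1$. First, $P_eG$ is connected, indeed contractible: paths can be smoothly shrunk to the constant path at $e$ (with an appropriate flattening to preserve the endpoint-derivative conditions). Second, the Lie algebra $P_e\mathfrak{g}$ is perfect when $\mathfrak{g}$ is semisimple, since the pointwise decomposition $\mathfrak{g} = [\mathfrak{g}, \mathfrak{g}]$ can be realized by path-valued commutators lying in $P_e\mathfrak{g}$. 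A Lie algebra homomorphism from a perfect algebra into the abelian $\R$ necessarily vanishes, so $d\chi_e = 0$, and then $\chi \equiv 1$ by connectedness.

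For existence, I would pull back the central extension along the doubling homomorphism $d : P_eG \to \Omega G$ (sending $\gamma$ to the loop that traverses $\gamma$ forward on $[0,\tfrac{1}{2}]$ and backward on $[\tfrac{1}{2},1]$, which is a group homomorphism because both products are pointwise), obtaining a central $\U(1)$-extension $d^*\widetilde{\Omega G} \to P_eG$. A fusion factorization is precisely a homomorphic splitting of this pulled-back extension. Since $P_eG$ is contractible, every smooth $\U(1)$-bundle over it is trivial and a smooth set-theoretic section $s$ exists. My plan is to modify $s$ until its associated 2-cocycle $c(\gamma_1, \gamma_2) := s(\gamma_1)\, s(\gamma_2)\, s(\gamma_1\gamma_2)^{-1} \in \U(1)$ is a coboundary. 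Disjoint commutativity of $\widetilde{\Omega G}$, guaranteed under the simply-connected hypothesis by the earlier Main Theorem, is the decisive input: when the doubled loops of $\gamma_1$ and $\gamma_2$ can be arranged with disjoint supports in $S^1$, the chosen lifts automatically commute, imposing strong symmetries on $c$ at such pairs. Combined with smoothness and the perfectness of $P_e\mathfrak{g}$, these symmetries should trivialize the cohomology class of $c$ and yield the sought homomorphism $i$.

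The main obstacle will be this last step: upgrading the topological triviality of the pulled-back extension into a genuine multiplicative splitting. Both hypotheses enter essentially here, because disjoint commutativity supplies the commutation relations needed to control $c$ on pairs with disjoint doubled support, while simple-connectedness is what forces the residual Lie algebra obstruction on $P_e\mathfrak{g}$ (an $\R$-valued 2-cocycle classifying the infinitesimal central extension) to be exact, reflecting $\pi_2(G) = 0$. I expect that a careful combination of these two inputs -- exploiting the symmetry between the forward and backward halves of $d(\gamma)$ to exhibit an explicit primitive -- will display $c$ as a coboundary and thereby complete the construction of $i$.
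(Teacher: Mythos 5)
Your uniqueness argument is essentially the paper's: the quotient of two fusion factorizations is a $\U(1)$-valued homomorphism on $P_eG$, which is trivial because $P_0\mathfrak{g}$ is perfect (this is \cref{TheoremSemisimpleNoHoms}; note that realizing $X=\sum f_k x_k$ as a sum of commutators \emph{inside} $P_0\mathfrak g$ genuinely requires factoring each flat function as $f_k=g_kh_k$ with both factors flat, \cref{FlatPathLemma} -- a constant path $x_j$ is not an element of $P_0\mathfrak g$ -- and passing from $d\chi_e=0$ to $\chi\equiv 1$ uses regularity of $P_eG$). These are minor glosses; the uniqueness half is fine.

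The existence half has a genuine gap. You correctly reduce the problem to splitting the pulled-back central extension $d^*\widetilde{\Omega G}\to P_eG$, but the step that would actually produce the splitting -- showing the group $2$-cocycle $c$ of a set-theoretic section is a coboundary -- is only conjectured (``these symmetries \emph{should} trivialize the cohomology class''), and the mechanism you propose, disjoint commutativity, cannot carry that weight: the commutation relations it provides apply only to pairs whose doubled loops have disjoint supports in $S^1$, i.e.\ to pairs of paths each supported in a proper subinterval of $[0,\pi]$ and returning to $e$. A generic pair $(\gamma_1,\gamma_2)\in P_eG\times P_eG$ is nowhere near this locus, so you get no control on $c$ there, and no argument is given for how to propagate the constraints. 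Indeed, the paper's existence proof does not use disjoint commutativity at all. It instead constructs a $\U(1)$-\emph{anti}-equivariant lift $\tilde\sigma$ of the flip $\sigma(t)=-t$: since $G$ is semisimple and simply connected, $\sigma^*\widetilde{\Omega G}{}^*$ has the same classifying cocycle as $\widetilde{\Omega G}$ (the flip and dualization each negate the invariant cocycle \cref{FormulaForCocycle}), hence is isomorphic to it; then $w(\gamma,\Phi)=\Phi^{-1}\tilde\sigma(\Phi)$ satisfies $w(\gamma,z\Phi)=\bar z^2 w(\gamma,\Phi)$, so $\ker(w)\to P_eG$ is a double cover of a contractible group, and its identity component is the fusion factorization (\cref{LemmaExistenceFusionFactorization,ThmExistenceFusionFactorization}).

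If you want to complete the proof along the lines of your own setup, the forward/backward symmetry of $d(\gamma)$ should be exploited at the \emph{Lie algebra} level, not the group-cocycle level: by \cref{lemma:equivariantcocycle} the class of $\widetilde{\Omega G}$ (restricted to the identity component, which contains the image of $d$) is represented by $\omega(X,Y)=\int_{S^1}b(X,Y')\,dt$, and a direct substitution $t\mapsto 2\pi-t$ shows $(d_*)^*\omega=0$. Since $P_eG$ is contractible and regular, the classification map \cref{CanonicalMap} is injective for $P_eG$, so $d^*\widetilde{\Omega G}$ is the trivial central extension and admits a homomorphic section. That computation -- not disjoint commutativity -- is the missing decisive input; also note that your appeal to $\pi_2(G)=0$ is misplaced here ($\pi_2(G)=0$ gives $\pi_1(\Omega G)=0$, whereas what one needs about $P_eG$ is automatic from contractibility).
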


In particular, if $G$ is simply connected and semisimple, then every central extension of $\Omega G$ carries a unique fusion factorization.
The whole situation can be summarized as follows.

\begin{maincorollary}
If $G$ is simply connected and semisimple, then for every central extension $\widetilde{\Omega G}$ of $\Omega G$, there exists a unique central crossed module $X(\widetilde{\Omega G})$ of Fr\'echet Lie groups with underlying homomorphism 
\begin{equation*}
t : \widetilde{\Omega_{(0,\pi)}G} \to P_eG.
\end{equation*} Moreover, there exists a unique Lie 2-group  $\TwoGroupFusFac{\widetilde{\Omega G}}{i}$, with objects and morphisms
\begin{equation*}
\xymatrix{\widetilde{P_e G^{[2]}} \ar@<4pt>[r]^-{s} \ar@<-2pt>[r]_-{t} & P_eG\text{,}}
\end{equation*} 
where $\widetilde{P_e G^{[2]}} \subset \widetilde{\Omega G}$ is the subgroup over those loops that are flat at  $0$ and $\pi$.
Finally, $X(\widetilde{\Omega G})$ and   $\TwoGroupFusFac{\widetilde{\Omega G}}{i}$ correspond to each other under the adjunction between crossed modules and 2-groups. 
\end{maincorollary}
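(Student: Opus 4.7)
The plan is to chain together the three Main Theorems stated earlier in the introduction. For the first assertion, the hypothesis that $G$ is simply connected and semisimple places us in the regime of the first Main Theorem, so every central extension $\widetilde{\Omega G}$ of $\Omega G$ is automatically disjoint commutative. \cref{main-one} then applies directly: the canonical action $\alpha$ turns $t:\widetilde{\Omega_{(0,\pi)}G}\to P_eG$ into a central crossed module, and the semisimplicity hypothesis forces $\alpha$ to be the only such action. This yields both existence and uniqueness of $X(\widetilde{\Omega G})$.

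For the 2-group, the Main Theorem on fusion factorizations supplies, under the same hypotheses, a unique homomorphism $i:P_eG\to\widetilde{\Omega G}$ lifting the thin-loop doubling of paths. Given $i$, the Lie 2-group $\TwoGroupFusFac{\widetilde{\Omega G}}{i}$ is defined by taking $\widetilde{P_eG^{[2]}}\subset\widetilde{\Omega G}$ to be the pre-image of the subgroup of loops flat at $0$ and $\pi$, with source and target maps obtained by splitting such a loop at $\pi$, reading off the two resulting paths in $P_eG$, and normalizing by $i$ so that $i$ becomes the identity bisection. Uniqueness of this 2-group structure then follows from uniqueness of $i$ combined with the previously established uniqueness of $\alpha$.

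To realize the correspondence under the adjunction between crossed modules and 2-groups, I would construct the isomorphism of Fr\'echet Lie groups
\begin{equation*}
\widetilde{\Omega_{(0,\pi)}G}\rtimes_\alpha P_eG \;\longrightarrow\; \widetilde{P_eG^{[2]}},\qquad (\Phi,\gamma)\longmapsto \Phi\cdot i(\gamma),
\end{equation*}
identifying the morphism group produced by the adjunction with $\widetilde{P_eG^{[2]}}$. Bijectivity is immediate from the unique factorization of a loop flat at $0$ and $\pi$ as its restriction to $(0,\pi)$ times the double of its second-half path, with $i$ providing the canonical lift of the latter. The content is that this bijection is a group homomorphism, which reduces to the identity $i(\gamma)\,\Phi\,i(\gamma)^{-1}=\alpha_\gamma(\Phi)$ -- essentially the definition of $\alpha$ as conjugation by a thin-loop lift of $\gamma$. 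The main obstacle I anticipate is precisely this last identification: verifying that the ad hoc thin-loop lift used to define $\alpha$ agrees with $i(\gamma)$ up to a central phase that is neutralized by disjoint commutativity, so that the source, target, and composition of $\TwoGroupFusFac{\widetilde{\Omega G}}{i}$ match the data produced from $X(\widetilde{\Omega G})$ under the adjunction.
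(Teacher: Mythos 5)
Your proposal is correct and follows essentially the same route as the paper: it chains \cref{CorollarySemisimpleSimplyConnectedDisjointCommutative}, \cref{CanonicalCrossedModule}, \cref{ThmUniqueAction} and \cref{ThmExistenceFusionFactorization}, and then identifies $\TwoGroupFusFac{\widetilde{\Omega G}}{i}$ with $\mathcal{G}(X(\widetilde{\Omega G}))$ via $(\Phi,\gamma)\mapsto\Phi\, i(\gamma)$, which is precisely the inverse of the counit used in \cref{Lie-2-group-from-fusion-factorization}. The only remark: the obstacle you anticipate at the end dissolves more easily than you fear, since $i(\gamma)$ is literally one admissible choice of the lift $\widetilde{\gamma\cup\gamma}$ in \cref{CanonicalAction}, and independence of that choice needs only centrality of $\U(1)\subset\widetilde{\Omega G}$, not disjoint commutativity (which is instead what makes $\ker(s)$ and $\ker(t)$ commute and the Peiffer identity hold).
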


Finally, we come back to the main motivation of the whole topic, the construction of models for the string 2-group. In \cite{BCSS} it was proved that
the geometric realization of the Lie 2-group corresponding to the crossed module $X\BCSS(G, k)$ is a 3-connected cover of $G$, which -- for $G=\Spin(d)$ -- is the defining property of a string 2-group. In \cref{classification-of-Lie-2-groups} we generalize this result slightly from Lie groups of Cartan type to arbitrary simple and simply connected Lie groups. \Cref{th:classification} states the following.

\begin{maintheorem}
If $G$ is simple and simply connected,  and $\widetilde{\Omega G}$ is a basic central extension, then the geometric realization of the (canonically isomorphic) Lie 2-groups $\mathcal{G}(X(\widetilde{\Omega G}))$ and $\TwoGroupFusFac{\widetilde{\Omega G}}{i}$ are 3-connected covers of $G$. In particular, if $G=\Spin (d)$, both are models for the string 2-group.  
\end{maintheorem}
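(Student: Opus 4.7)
The plan is to reduce the claim to the Cartan-type case already handled by Baez--Crans--Stevenson--Schreiber, and then to transport the conclusion through the weak equivalences supplied by \cref{equivalence-baez-waldorf} together with the canonical isomorphism of Lie 2-groups from the main corollary immediately preceding. Since that corollary identifies $\mathcal{G}(X(\widetilde{\Omega G}))$ and $\TwoGroupFusFac{\widetilde{\Omega G}}{i}$ canonically, it is enough to prove the statement for $\mathcal{G}(X(\widetilde{\Omega G}))$.

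First I would fix a maximal compact subgroup $K \subseteq G$. Because $G$ is simple and simply connected, $K$ is compact, simple, and simply connected, hence of Cartan type, and the Cartan decomposition $G \cong K \times \R^n$ exhibits $K \hookrightarrow G$ as a smooth deformation retract. This inclusion induces homotopy equivalences of Fr\'echet Lie groups $\Omega K \hookrightarrow \Omega G$ and $P_e K \hookrightarrow P_e G$ (each $P_e$ being contractible via rescaling), and the restriction $\widetilde{\Omega K}$ of the basic central extension $\widetilde{\Omega G}$ along $\Omega K \hookrightarrow \Omega G$ remains basic: the transgression identifies isomorphism classes of $\U(1)$-central extensions with $H^3(G;\Z) \cong \Z$ (similarly for $K$), and $K \hookrightarrow G$ induces an isomorphism $H^3(G;\Z) \cong H^3(K;\Z)$, hence preserves generators.

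These ingredients assemble into a morphism of Lie 2-groups $\mathcal{G}(X(\widetilde{\Omega K})) \to \mathcal{G}(X(\widetilde{\Omega G}))$ whose geometric realization is a homotopy equivalence, because each realization, being the homotopy quotient of its contractible object group by the morphism group acting via $t$, is homotopy equivalent to $B\widetilde{\Omega K}$ (respectively $B\widetilde{\Omega G}$), and $B$ preserves the homotopy equivalence $\widetilde{\Omega K} \simeq \widetilde{\Omega G}$ obtained by applying the five-lemma to the long exact sequences of the two $\U(1)$-extensions. Since $K$ is of Cartan type, \cref{equivalence-baez-waldorf} then provides a further weak equivalence $X(\widetilde{\Omega K}) \cong X\BCSS(K, k)$ with basic level $k = \pm 1$, and the theorem of Baez--Crans--Stevenson--Schreiber identifies the geometric realization of that crossed module as a 3-connected cover of $K$. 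Composing with the homotopy equivalence $K \simeq G$ yields the desired conclusion for $\mathcal{G}(X(\widetilde{\Omega G}))$; the assertion for $\Spin(d)$ is then immediate from the definition of the string 2-group.

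The main obstacle is step one, the compact-form reduction. One needs to check that the three ingredients $\Omega$, $P_e$, and the central extension inherit compatible homotopy equivalences from $K \hookrightarrow G$, and, most delicately, that the restriction of a basic extension is again basic. The latter requires a classification of basic central extensions of $\Omega G$ valid beyond the Cartan-type case; I would handle this by \emph{defining} basic via the transgression class being a generator of $H^3(G;\Z) \cong \Z$, so that naturality of the transgression along $K \hookrightarrow G$ does the job. A secondary concern is that the comparison morphism $\mathcal{G}(X(\widetilde{\Omega K})) \to \mathcal{G}(X(\widetilde{\Omega G}))$ is not a weak equivalence of 2-groups on the nose (the induced map on $\pi_0$ is only $K \hookrightarrow G$), so the equivalence must be established at the level of geometric realizations rather than of crossed modules.
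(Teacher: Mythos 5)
Your strategy---reduce to the Cartan-type case via a maximal compact subgroup $K\subseteq G$ and then quote the Baez--Crans--Stevenson--Schreiber theorem through the comparison of \cref{equivalence-to-BCSS}---is genuinely different from the paper's, but it contains a gap that cannot be repaired in the stated generality. The claim that ``$K$ is compact, simple, and simply connected, hence of Cartan type'' is false: a maximal compact subgroup of a simple, simply connected Lie group need not be simple, and need not carry the cohomology your argument requires. For $G=\widetilde{\SL(2,\R)}$ the maximal compact subgroup is trivial, so there is no Cartan-type group to reduce to and $H^3(K;\Z)=0$; for the universal cover of $\SL(4,\R)$ it is $\Spin(4)$, which is semisimple but not simple, with $H^3(K;\Z)\cong\Z\oplus\Z$. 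In both cases the appeal to ``$H^3(G;\Z)\cong\Z$'' and to the BCSS theorem for $K$ breaks down, and since the stated purpose of \cref{th:classification} is precisely to extend the BCSS result beyond Cartan type, a proof that only works when $K$ happens to be of Cartan type does not establish the theorem. A secondary issue is that you classify central extensions of $\Omega G$ by $H^3(G;\Z)$ via transgression; the paper instead classifies them by Lie algebra $2$-cocycles with integral periods (\cref{CanonicalMap}), and the two descriptions only visibly agree in the compact case.

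The paper's own proof avoids any compact-form reduction: it realizes the strict short exact sequence $B\U(1)\to\mathcal{G}\to G'_{\mathrm{dis}}$ geometrically to obtain a fibration $K(\Z,2)\to|\mathcal{G}|\to G$, and then identifies the connecting homomorphism $\varphi:\pi_3(G)\to\Z$ in \cref{ExactSequence} as the pairing of $\pi_2(\Omega G)$ with the Chern class $[\overline{\omega}]$ of the central extension, using the Baez--Stevenson identification of $|\mathcal{G}|$ with $B\widetilde{\Omega G}$ when the object group is contractible. Your observation that $|\mathcal{G}|\simeq B\widetilde{\Omega G}$ is exactly the right mechanism and is what the paper exploits; if you run the long exact sequence of $\U(1)\to\widetilde{\Omega G}\to\Omega G$ directly for the given $G$ (rather than for $K$) and feed it through the classifying space functor, you recover the paper's argument without ever needing $K$ to be of Cartan type.
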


\paragraph{Acknowledgements.} 
ML gratefully acknowledges support from SFB 1085 ``Higher invariants'' funded by the German Research Foundation (DFG).

\numberwithin{equation}{subsection}

\section{Loop groups and their central extensions}
\label{section-central-extensions}
       
In this section we recall some relevant results about central extensions of loop groups and path groups, and also add a couple of new results which we will use later. 
In particular, in \cref{disjoint commutativity}, we discuss and investigate  the relatively new notion of disjoint commutative central extensions.    
       
\subsection{Path groups and loop groups}

Throughout, let $G$ be a connected (finite-dimensional) Lie group.
We denote the identity element by $e$, and we denote by $LG = C^\infty(S^1, G)$ the smooth loop group of $G$. 
We always identify $S^1 = \R/2\pi \Z$.
For $I\subset S^1$, we write 
\begin{equation}
\label{NotationLIG}
L_I G = \{\gamma \in LG \mid \gamma(t) = e ~\text{whenever}~t \notin I\}.
\end{equation}
We say that a map $f: M \to N$ between manifolds is \emph{flat} at a point $p \in M$, if all directional  derivatives of $f$ vanish at all orders at the point $p$. 
We observe, in particular, that all elements of $L_{(a, b)} G$ are flat at $t=a, b$ (unless $(a, b) = (0, 2\pi)$).
We also denote by $\Omega G \subset LG$ the subset of loops $\gamma$ that are flat at $t=0$ and satisfy $\gamma(0)= e$.
Analogously to the notation above, we also write 
\begin{equation*}
\Omega_I G = L_IG \cap \Omega G.
\end{equation*}

We denote by $PG$ the space of all smooth maps $\gamma: [0, \pi] \to G$ that are flat at their endpoints, and by $P_e G \subset PG$ the subset of paths $\gamma$ with $\gamma(0) = e$.
We then have a short exact  sequence
\begin{equation*}
\xymatrix{
  \Omega_{(0, \pi)}G \ar[r] & P_e G \ar[r]^-{\mathrm{ev}} & G,
}
\end{equation*}
where the first map is the restriction of $\gamma \in \Omega_{(0, \pi)} G$ to the interval $[0, \pi]$, and the second map is the endpoint evaluation.
For two paths $\gamma_1$, $\gamma_2$ with a common initial point and a common end point, we define a loop $\gamma_1\cup\gamma_2 \in LG$ by
\begin{equation} \label{CupMap}
  (\gamma_1 \cup \gamma_2)(t) := \begin{cases} \gamma_1(t) & t \in [0, \pi] \\ \gamma_2(2\pi - t) & t \in [\pi, 2\pi] \end{cases}.
\end{equation}
We identify the fibre product $P_e G^{[2]} = P_e G \times_G P_e G$ with its image in $LG$ under this map.

For non-trivial $G$, all loop groups and path groups discussed above are infinite-dimensional Lie groups, which are modeled on nuclear Fr\'echet spaces.
Their Lie algebras are obtained by taking the appropriate path space inside the Lie algebra $\mathfrak{g}$ of $G$.

\begin{remark} \label{RemarkRegular}
The Fr\'echet Lie groups $L_IG$, $\Omega G$, and $P_e G$ are \emph{regular} in the sense of \cite[Def.~3.12]{NeebCentral}, which means that every smooth curve in their Lie algebra can be integrated to a smooth curve in the group.
This follows from the fact that such an integral can be calculated pointwise in the loop parameter, which gives a smooth curve in $G$. 
Then, as solutions to ordinary differential equations depend smoothly on the initial data, these curves yield a smooth curve in the appropriate path group.
\end{remark}

%The following result will be  key to the present paper.
%For $LG$, it is a corollary of \cite[Prop.~3.4.1]{PressleySegal}, but we are not aware of a proof in the literature for the other cases.

It is a corollary of \cite[Prop.~3.4.1]{PressleySegal} that if $G$ is semisimple, there are no non-trivial Lie group homomorphisms from $LG$ to any abelian Lie group $A$, i.e., every Lie group homomorphism $\varphi: LG \to A$ is $\varphi=1$.
 The following generalization will be  key to the present paper.

\begin{theorem} \label{TheoremSemisimpleNoHoms}
If $G$ is a semisimple Lie group, then the Fr\'echet Lie group $P_e G$ does not admit non-trivial Lie group homomorphisms to any abelian Lie group $A$.
The same is true for the identity components of $\Omega G$ and $L_I G$, for any $I \subseteq S^1$.
\end{theorem}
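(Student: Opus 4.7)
My plan is to reduce to the Lie algebra level and exploit a density statement for the commutator subalgebra. Suppose $\varphi : H \to A$ is a Lie group homomorphism, where $H$ is $P_e G$ or the identity component of $\Omega G$ or $L_I G$, and $A$ is an abelian Lie group. Its derivative at the identity is a continuous Lie algebra homomorphism $d\varphi : \mathfrak{h} \to \mathfrak{a}$. Since $\mathfrak{a}$ is abelian, $d\varphi$ vanishes on $[\mathfrak{h}, \mathfrak{h}]$ and, by continuity, on its closure. Once this closure equals $\mathfrak{h}$, the homomorphism property forces $d\varphi|_g = 0$ at every $g \in H$, so $\varphi$ is locally constant; as $H$ is connected and regular (\cref{RemarkRegular}), one concludes $\varphi \equiv e_A$.

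The crux is therefore to prove that $[\mathfrak{h}, \mathfrak{h}]$ is dense in $\mathfrak{h}$. I work out the case $\mathfrak{h} = P_e \mathfrak{g}$; the loop-space cases are entirely analogous, with cut-offs placed near each endpoint of the relevant support interval (for $I = S^1$ no flatness issue arises and one essentially recovers the Pressley--Segal argument). Since $\mathfrak{g}$ is semisimple, $[\mathfrak{g}, \mathfrak{g}] = \mathfrak{g}$ as a vector space, so I fix pairs $(a_j, b_j) \in \mathfrak{g} \times \mathfrak{g}$ such that $z_j := [a_j, b_j]$ form a basis of $\mathfrak{g}$. Any $\xi \in P_e \mathfrak{g}$ then expands as $\xi(t) = \sum_j c_j(t) z_j$ with coefficients $c_j \in P_e \R$. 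The naive identity $c_j(t) z_j = [c_j(t) a_j, b_j]$ would conclude the argument, but the constant $b_j$ does not lie in $P_e \mathfrak{g}$ because it fails to vanish at $0$. To repair this, I take cut-offs $\chi_\varepsilon \in P_e \R$ that are $\equiv 0$ on $[0, \varepsilon]$ and $\equiv 1$ on $[2\varepsilon, \pi]$, and an auxiliary $\widetilde{\chi}_\varepsilon \in P_e \R$ that equals $1$ on $\supp(\chi_\varepsilon)$. Then
\[
\bigl[(c_j \chi_\varepsilon) a_j, \, \widetilde{\chi}_\varepsilon b_j\bigr] \;=\; c_j \chi_\varepsilon \, z_j
\]
is a genuine commutator in $P_e \mathfrak{g}$, and Leibniz applied to $c_j(\chi_\varepsilon - 1)$ together with the flatness $c_j^{(k)}(t) = O(t^\infty)$ at $0$ shows $c_j \chi_\varepsilon \to c_j$ in every Fr\'echet semi-norm: the $k$-th derivatives of $\chi_\varepsilon$ blow up at rate $\varepsilon^{-k}$ on the transition interval $[\varepsilon, 2\varepsilon]$, but this is dominated there by the $O(\varepsilon^\infty)$ decay of the derivatives of $c_j$. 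Summing in $j$ gives the desired density.

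The main obstacle is not conceptual but technical, namely the interplay between the cut-off construction and the flatness condition defining $\mathfrak{h}$: one cannot use constant paths (they fail the flatness condition), nor can one insist that a cut-off in $P_e \R$ equal $1$ across the flatness locus. The approximation argument above addresses both, and its convergence relies crucially on the infinite order of flatness of $c_j$ at the relevant endpoint.
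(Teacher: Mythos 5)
Your proof is correct, but it reaches the key algebraic input by a genuinely different route than the paper. The paper proves that $P_0\mathfrak{g}$ is \emph{perfect}, i.e.\ $[P_0\mathfrak{g},P_0\mathfrak{g}]=P_0\mathfrak{g}$ on the nose: each coefficient function $f_k$ is factored as a product $f_k=g_kh_k$ of two functions that are again flat at the endpoints (\cref{FlatPathLemma}, a pointwise \quot{flat square root} construction), so that $f_kx_k$ is literally a finite sum of commutators $[g_kx_i,h_kx_j]$ and no limiting process is required. You instead establish only that the commutator subalgebra is \emph{dense}, by cutting off the coefficients near the flatness locus and checking that the $\varepsilon^{-k}$ blow-up of the cut-off derivatives on the transition interval is beaten by the $O(\varepsilon^{\infty})$ decay of the flat coefficients there; that Fr\'echet semi-norm estimate is correct, and density suffices because the induced Lie algebra homomorphism is continuous. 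The factorization lemma buys the paper a stronger, purely algebraic statement and dispenses with any convergence analysis; your cut-off argument buys independence from that lemma at the cost of a routine but genuine estimate plus the (automatic) continuity of $d\varphi$. Your choice of a basis of $\mathfrak{g}$ made of single commutators $[a_j,b_j]$, versus the paper's expansion of each basis vector as a linear combination of commutators, is immaterial bookkeeping, and both arguments conclude identically from connectedness and regularity (\cref{RemarkRegular}) via Milnor's lemma. The one point you leave implicit --- disconnected $I\subseteq S^1$, where one must first truncate to finitely many component intervals --- is treated no more explicitly in the paper's own proof.
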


We need the following lemma.
    
    \begin{lemma} \label{FlatPathLemma}
    For every smooth function $f: [0, a] \to \R$ that is flat at zero, there are smooth functions $g_1, g_2 : [0, a] \to \R$ that are also flat at zero and satisfy $f(t) = g_1(t) g_2(t)$ for all $t \in [0, a]$.
    \end{lemma}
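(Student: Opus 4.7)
The plan is to build $g_1$ as a smooth function on $[0,a]$ that is flat at $0$, strictly positive on $(0,a]$, and whose rate of vanishing at $0$ is tailored to $f$: fast enough that $g_1$ itself is flat, but slow enough that the quotient $g_2 := f/g_1$ extends smoothly and flatly through $0$. Naive choices such as $g_1 = g_2 = \sqrt{f}$ (for $f \geq 0$) fail because smooth square roots of flat nonnegative functions need not be smooth, so some adaptation of $g_1$ to $f$ is unavoidable.

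First I would quantify the flatness of $f$: iterated Taylor expansion with integral remainder yields a strictly decreasing sequence $\delta_n \downarrow 0$ in $(0,a]$ such that $|f^{(k)}(t)| \leq t^{2n}$ for all $k \leq n$ and $t \in [0, \delta_n]$. Next I fix a smooth partition of unity $(\phi_n)_{n \geq 1}$ on $(0,a]$ with $\supp \phi_n \subset (\delta_{n+1}, \delta_{n-1})$, and set
\[
  g_1(t) \;:=\; \sum_{n \geq 1} \phi_n(t)\, t^n.
\]
The sum is locally finite, so $g_1$ is smooth and strictly positive on $(0,a]$. On $[0, \delta_n]$ only summands of index $\geq n$ contribute, whence $0 \leq g_1(t) \leq t^n$ there; analogous bounds for derivatives show that $g_1$ extends smoothly and flatly through the origin. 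On each interval $[\delta_{n+1}, \delta_n]$, only $\phi_n$ and $\phi_{n+1}$ are nonzero and sum to $1$, so (assuming WLOG $a \leq 1$) one has $g_1(t) \geq t^{n+1}$.

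Defining $g_2(t) := f(t)/g_1(t)$ for $t > 0$ and $g_2(0) := 0$, the estimate $|f(t)| \leq t^{2n}$ on $[0, \delta_n]$ together with $g_1(t) \geq t^{n+1}$ on $[\delta_{n+1}, \delta_n]$ yields $|g_2(t)| \leq t^{n-1}$ there. As $n \to \infty$ this gives continuity and flatness of $g_2$ at $0$, and analogous estimates via the quotient rule then propagate to all higher derivatives.

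The principal technical obstacle is controlling those higher derivatives of $g_2$: the bounds on $(f/g_1)^{(k)}$ depend on derivatives of $g_1$, hence on those of the cut-offs $\phi_n$, which scale inversely with the gap $\delta_{n-1} - \delta_{n+1}$ and can grow rapidly if $(\delta_n)$ shrinks too fast. The remedy is to pass to a sparser subsequence of $(\delta_n)$, so that the strengthened flatness estimate $|f^{(k)}(t)| \leq t^{N(n)}$ (with $N(n)$ chosen large enough) absorbs the growth of $\|\phi_n^{(j)}\|_\infty$. This bookkeeping is routine but is the only technically delicate part of the argument.
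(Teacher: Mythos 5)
Your proposal is correct and follows essentially the same route as the paper: both construct an auxiliary smooth function that is flat at $0$ yet decays strictly more slowly than $f$ (by smoothly interpolating the powers $t^n$ across a shrinking sequence of intervals) and then divide. The only substantive difference is that the paper factors through a square root of the interpolant and hides the control of higher derivatives behind an ``easy exercise'' criterion stated purely in terms of $C^0$-bounds, whereas you divide directly and explicitly confront the derivative estimates for the cutoff functions --- which is in fact the genuinely delicate point in either version, and which your sparsification/scale-adapted-cutoff remedy handles by standard means.
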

    
For the proof of \cref{FlatPathLemma}, we need the following observation: consider the following property for a map $f: [0, a] \to \R$ with $f(0)=0$. \begin{enumerate}
 \item[($\star$)] {\itshape $f$ is smooth on $(0, a]$ and for each $n \in \N$, there exists $\varepsilon >0$ such that $|f(t)| \leq t^n$ for each $t \in [0, \varepsilon]$. }
\end{enumerate}
An easy exercise shows that $f$ satisfies ($\star$) if and only if  $f$ is smooth on $[0,a]$ and flat at zero. 
\begin{comment}
Note that since $f(0)=0$,
\begin{equation*}
  f^{(n)}(0) = \lim_{t\to 0} \frac{f(t)}{t^n}
\end{equation*}
for all $n\in \N$. 

Suppose $f$ satisfies ($\star$). Then, we have
\begin{equation*}
  |f^{(n)}(0)| = \lim_{t\to 0} \frac{|f(t)|}{t^n}\leq \lim_{t\to 0} \frac{t^{n+1}}{t^n}=\lim_{t\to 0}t=0\text{.}
\end{equation*}
Thus, $|f^{(n)}(0)|=0$ for all $n\in \N$, i.e., $f$ is smooth and flat at zero.

Conversely, suppose that $f$ is smooth and flat at zero, and suppose ($\star$) is false.
Thus, there exists $n\in \N$ such that $|f(t)|>t^{n}$ for all $t\in [0,a]$. This contradicts $f(0)=0$.
\end{comment}
    
 \begin{proof}[of \cref{FlatPathLemma}]
    By ($\star$) we may choose, for each $n \in \N$, an $\varepsilon_n > 0$ such that $|f(t)| \leq t^n$ for each $t \in [0, \varepsilon_n]$. 
  We choose these numbers such that the sequence $\varepsilon_1, \varepsilon_2, \dots$ is strictly decreasing and converges to zero.
    For each $n \in \N$, we choose smooth functions $h_n :[\varepsilon_{n+1}, \varepsilon_n] \to \R$ such that $\frac{1}{2}t^n \leq h_n(t) \leq t^n$ for all $t \in [\varepsilon_{n+1}, \varepsilon_n]$, in such a way that the functions $h_{n+1}$ and $h_n$ fit smoothly together.
    As  $(\varepsilon_n)_{n\in \N}$ forms a null sequence, there is a smooth function $h$ on $(0, \varepsilon_1]$ such that $h$ agrees with $h_n$ when restricted to $[\varepsilon_{n+1}, \varepsilon_n]$.
    Setting $h(0) = 0$,  we obtain a function $h$ which by construction satisfies $h(t) \geq \frac{1}{2}|f(t)|$ for each $t \in (0, \varepsilon_1]$, and which satisfies ($\star$), hence is flat at zero.
    We smoothly extend $h$ to a function defined on all of $[0, a]$.
    
    We now set $g_1(t) = f(t)/h(t)^{1/2}$, $g_2(t) = h(t)^{1/2}$.
    It is clear that the function $g_2$ satisfies ($\star$), and so does $g_1$, as $|g_1(t)| \leq h(t) / h(t)^{1/2} = h(t)^{1/2}$.
    Hence both $g_1$ and $g_2$ are flat at zero, and we have $f(t) = g_1(t) g_2(t)$, as required.
  \end{proof}

\begin{proof}[of \cref{TheoremSemisimpleNoHoms}]
We prove the result for $P_e G$, the proof for $L_I G$ is similar.
As $G$ is semisimple, we have  $\mathfrak{g} = [\mathfrak{g}, \mathfrak{g}]$, that is, every element of $\mathfrak{g}$ is linear combination of commutators.
We first show that the same is true for the Lie algebra $P_0 \mathfrak{g}$ of $P_e G$.
Let $x_1, \dots, x_n$ be a vector space basis for $\mathfrak{g}$ and choose numbers $a^{ij}_k \in \R$ with
    \begin{equation*}
       x_k = \sum_{ij=1}^n a^{ij}_k [x_i, x_j].
    \end{equation*}
    Write
    \begin{equation*}
      X(t) = \sum_{k=1}^n f_k(t) x_k
    \end{equation*}
    By \cref{FlatPathLemma}, there exist $g_k, h_k \in P_0\R$ such that $f_k(t) = g_k(t)h_k(t)$, $k=1, \dots, n$.
    Then 
    \begin{equation*}
      X(t)  = \sum_{ijk=1}^n f_k(t) a_k^{ij} [x_i, x_j] = \sum_{ijk=1}^n  a_k^{ij} [g_k(t) x_i, h_k(t) x_j].
    \end{equation*}
    As commutators in $P_0 \mathfrak{g}$ are taken pointwise, this witnesses $X$ as a sum of commutators in the Lie algebra $P_0\mathfrak{g}$.
    
Let now $\varphi: P_eG \to A$ be a Lie group homomorphism with induced Lie algebra homomorphism $\varphi_* : P_0\mathfrak{g} \to \mathfrak{a}$, where $\mathfrak{a}$ is the Lie algebra of $A$.
As $\varphi_*$ sends commutators to commutators, it must send the commutator subspace of $P_0\mathfrak{g}$ to the commutator subspace of $\mathfrak{a}$, which is zero as $A$ (and consequently $\mathfrak{a}$) is abelian.
However, as $[P_0\mathfrak{g}, P_0\mathfrak{g}] = P_0\mathfrak{g}$, this implies that $\varphi_*$ is identically zero.
Since $P_e G$ is regular (see \cref{RemarkRegular}), this implies, together with the fact that $P_eG$ is connected that $\varphi$ itself is trivial; see \cite[Lemma~7.1]{MilnorInfiniteLie}.
\end{proof}

\subsection{Classification of central extensions of loop groups}

%Throughout this section, let $G$ be a finite-dimensional connected Lie group. 
We recall that a \emph{central extension} of a (possibly infinite-dimensional, Fr\'echet) Lie group $H$ (by the group $\U(1)$) is a  sequence
\begin{equation*}
1\to\U(1) \longrightarrow \widetilde{H} \stackrel{\pi}{\longrightarrow} H\to 1
\end{equation*}
of Lie groups and Lie group homomorphisms such that it is exact as a sequence of groups, and $\widetilde{H}$ is a principal $\U(1)$-bundle over $H$.
For such a central extension, we always identify $\U(1)$ with its image in $\widetilde{H}$.
A Lie group isomorphism $f : \widetilde{H} \to \widetilde{H}^\prime$ is an \emph{isomorphism of central extensions} if it is base point-preserving and trivial on $\U(1) \subset \widetilde{H}$.
%We say that $f$ is an \emph{anti-isomorphism of central extensions} if instead it is conjugation on $\U(1)$, that is, $f(z) = \overline{z}$.
We denote by $\CExt(H)$ the groupoid of central extensions of $H$.

Given two central extension $\widetilde{H}$ and $\widetilde{H}^\prime$, their tensor product  $\widetilde{H} \otimes \widetilde{H}^\prime$ (as $\U(1)$-principal bundles) has a group structure turning it into another central extension.
This defines a symmetric monoidal structure on $\CExt(H)$.
Given a central extension $\widetilde{H}$, the dual circle bundle $\widetilde{H}^*$ has an obvious group structure turning it into a central extension that is inverse to $\widetilde{H}$ with respect to the tensor product.
%
\begin{comment}
  The group structure is given by $\xi_1\xi_2(\Phi_1\Phi_2) = \xi_1(\Phi_1)\xi_2(\Phi_2)$ whenever the right hand side makes sense.
\end{comment}
%
Hence, the set $h_0(\CExt(H))$ of isomorphism classes in $\CExt(H)$ is a group.

\medskip

We discuss the classification of central extensions $\widetilde{H}$ for a given (Fr\'echet) Lie group $H$.
Choosing a linear section of the Lie algebra homomorphism $\widetilde{\mathfrak{h}} \to \mathfrak{h}$  induced by the  projection $\widetilde{H} \to H$ gives an identification $\widetilde{h} \cong \mathfrak{h} \oplus \R$, under which the bracket attains the form
\begin{equation*}
  [(X, \lambda), (Y, \mu)] = ([X, Y], \omega(X, Y)),
\end{equation*}
for a continuous Lie algebra 2-cocycle  $\omega$ on $\mathfrak{h}$.
The cocycles corresponding to two different choices of splittings differ by a coboundary; hence, there is a well-defined class in the continuous Lie algebra cohomology group $H^2_c(\mathfrak{h}, \R)$ defined by the central extension $\widetilde{H}$.
This establishes a group homomorphism
\begin{equation} \label{CanonicalMap}
  h_0(\CExt(H)) \longrightarrow H^2_c(\mathfrak{h}, \R).
\end{equation}
This homomorphism is neither injective or surjective in general.
However, if $H$ is simply connected, then  \cref{CanonicalMap} is injective \cite[Thm.~7.12]{NeebCentral} and its image is the subgroup represented by cocycles $\omega$ whose group of periods
\begin{equation*}
  \Per \omega := \Bigl\{ \int_Z \overline{\omega} ~\Bigl|~ Z ~\text{a smooth 2-cycle on}~H\Bigr\} \subseteq \R
\end{equation*}
is contained in $2\pi\Z$. 
Here $\overline{\omega}$ denotes the left invariant 2-form on $G$ determined by $\omega$ \cite{NeebNote}.

In general, if $H$ satisfies  $\pi_1(H)=0$ but is not necessarily connected, we obtain a functor 
\begin{equation*}
\CExt(\pi_0(H)) \to \CExt(H),
\end{equation*} 
given by pullback of central extensions along the group homomorphism $H \to \pi_0(H)$. 
On isomorphism classes, this gives a sequence
\begin{equation}
\label{ClassifyingSequence}
 h_0(\CExt(\pi_0(H))) \longrightarrow h_0(\CExt(H)) \longrightarrow H^2_c(\mathfrak{h}, \R),
\end{equation}
which is exact in the middle if $\pi_1(H) = 0$. 
Indeed, if $\widetilde H$ is a central extension with vanishing cohomology class, then its restriction to the identity component $H_0 \subset H$ still has vanishing cohomology class. But over $H_0$ the map  \cref{CanonicalMap} is injective, showing the restriction of $\widetilde H$ to $H_0$ must be trivial. 
But this implies that $\tilde{H}$ comes from a central extension of $\pi_0(H)$.

\begin{example}
\label{ExampleMickelssonExtension}
For connected and simply connected groups $H$ (where the map \cref{CanonicalMap} is injective), there is an explicit description of the central extension corresponding a 2-cocycle $\omega$ on $\mathfrak{h}$ with $\Per\omega \subset 2\pi \Z$, see \cite[\S4.4]{PressleySegal}. 
Let $\overline{\omega}$ be the left invariant 2-form on $H$ determined by $\omega$.
For a loop $\gamma \in \Omega H$, we define
\begin{equation*}
  C(\gamma) := \exp\left( i \int_{\hat{h}} \overline{\omega}\right),
\end{equation*}
where $h : [0, 1] \to \Omega H$ is a smooth null homotopy of $\gamma$ and $\hat{h} : [0, 1] \times S^1 \to H$ is the corresponding surface in $H$ (such a null homotopy exists as $H$ is simply connected).
By the assumption on $\omega$, the integral of $\overline{\omega}$ over any \emph{closed} surface lies in $2 \pi \Z$, which implies that $C(\gamma)$ is independent of the choice of $h$.
One then defines
\begin{equation*}
  \widetilde{H} = P_{e} H \times \U(1) \bigl/ \sim,
\end{equation*}
where $(\gamma_1, z_1) \sim (\gamma_2, z_2)$ if $\gamma_1(\pi)=\gamma_2(\pi)$ and $C(\gamma_1 \cup \gamma_2) = z_2/z_1$.
The bundle projection $\pi : \widetilde{H} \to H$ is given by $(\gamma, z) \mapsto \gamma(\pi)$ and the group product is
\begin{equation*}
  [\gamma_1, z_1] \cdot [\gamma_2, z_2] = [(\const_{\gamma_1(\pi)} \cdot \gamma_2) * \gamma_1, z_1 z_2],
\end{equation*}
where $*$ denotes concatenation of paths.
This gives a central extension whose image under the map \cref{CanonicalMap} is the cocycle $\omega$ (Propositions~4.4.2 \& 4.5.6 of \cite{PressleySegal}).
\end{example}

\medskip

In the following we consider central extensions of the loop groups $LG$ and $\Omega G$ of a connected Lie group $G$.
As $\pi_k(\Omega G) = \pi_{k+1}(G)$ and
\begin{equation} \label{HomotopyGroupsLG}
  \pi_k(LG) = \pi_k(\Omega G) \oplus \pi_k(G) = \pi_{k+1}(G) \oplus \pi_k(G), 
\end{equation}
it follows that $LG$ and $\Omega G$ are connected if and only if $G$ is simply connected. 
Moreover, since $\pi_2(G) = 0$ for any (finite-dimensional) Lie group $G$, it follows that always $\pi_1(\Omega G)=0$ while $\pi_1(LG) = \pi_1(G)$.
Thus, if $G$ is simply connected, then the map \cref{CanonicalMap} is injective, hence any central extension of $H = LG$ or $\Omega G$ is determined by its corresponding Lie algebra cocycle $\omega$.

\begin{lemma}
\label{lemma:equivariantcocycle}
If $G$ is semisimple, then every 2-cocycle on $L\mathfrak{g}$ and $\Omega \mathfrak{g}$ is cohomologous to a cocycle of the form
\begin{equation} \label{FormulaForCocycle}
  \omega(X, Y) = \int_{S^1} b( X(t), Y^\prime(t)) \dd t
\end{equation}
for a $G$-invariant symmetric bilinear form $b$ on $\mathfrak{g}$.
\end{lemma}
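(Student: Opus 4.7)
The plan is to follow the classical Pressley-Segal strategy: normalize $\omega$ step by step using the natural symmetries of the loop algebra, then apply Whitehead's second lemma $H^2(\mathfrak{g}, \R) = 0$ (valid since $\mathfrak{g}$ is semisimple) together with a Fourier decomposition.

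For $L\mathfrak{g}$, I would first average $\omega$ over the rigid rotation action $R_s : L\mathfrak{g} \to L\mathfrak{g}$, $(R_sX)(t) := X(t-s)$, a smooth $S^1$-family of continuous Lie algebra automorphisms. Since $S^1$ is compact, the mean $\bar\omega := \frac{1}{2\pi}\int_0^{2\pi} R_s^*\omega\,\dd s$ is a continuous cocycle cohomologous to $\omega$ via an explicit primitive $1$-cochain, and is rotation-invariant; replace $\omega$ by $\bar\omega$. Next, the restriction to constant loops $\mathfrak{g} \hookrightarrow L\mathfrak{g}$ is a $2$-cocycle on $\mathfrak{g}$, which by Whitehead's lemma equals the coboundary of some linear $\phi : \mathfrak{g} \to \R$. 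Composing $\phi$ with the rotation-equivariant projection $X \mapsto \frac{1}{2\pi}\int_{S^1} X(t)\,\dd t$ and subtracting the corresponding coboundary on $L\mathfrak{g}$ arranges $\omega(c,c') = 0$ for all constants $c,c'$ while preserving rotation invariance. A second coboundary correction arranges $\omega(c, X) = 0$ for every constant $c \in \mathfrak{g}$ and every $X \in L\mathfrak{g}$; the cocycle identity $\omega([c,X],Y) + \omega([X,Y],c) + \omega([Y,c],X) = 0$ then collapses to the $\mathfrak{g}$-invariance $\omega([c,X],Y) = -\omega(X,[c,Y])$.

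With these normalizations in place I would expand in Fourier modes $X = \sum_n X_n e^{int}$. Rotation invariance forces $\omega(X_n e^{int}, Y_m e^{imt}) = 0$ unless $n+m = 0$, so $\omega$ is determined by a family of bilinear forms $B_n(X,Y) := \omega(X e^{int}, Y e^{-int})$ on $\mathfrak{g}$. Antisymmetry of $\omega$ yields $B_{-n}(Y,X) = -B_n(X,Y)$, $\mathfrak{g}$-invariance yields $\mathrm{ad}$-invariance of each $B_n$, and the cocycle identity applied to modes of degrees $n, m, -(n+m)$ gives a recursion whose unique solution is $B_n(X,Y) = n\cdot b(X,Y)$ for a single symmetric $\mathrm{ad}$-invariant bilinear form $b$. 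Re-summing the Fourier series then recovers the required expression $\omega(X,Y) = \int_{S^1} b(X(t), Y^\prime(t))\,\dd t$.

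For $\Omega\mathfrak{g}$, I would transfer the result via the vector space splitting $L\mathfrak{g} = \mathfrak{g} \oplus \Omega\mathfrak{g}$: after normalizing a cocycle on $\Omega\mathfrak{g}$ to be $\mathfrak{g}$-equivariant, it extends canonically to $L\mathfrak{g}$ by inflation, to which the previous argument applies. The main obstacle I expect is precisely this $\Omega\mathfrak{g}$ half, since rigid rotations do not preserve the subspace of loops flat at $0$ with value $0$ --- the first averaging step must therefore be replaced or adapted (e.g.\ by averaging against a rotation with a loop-valued correction that keeps one inside $\Omega\mathfrak{g}$), and one must verify that the $\mathfrak{g}$-equivariance normalization and the inflation map are well-defined continuous operations on the nuclear Fréchet cochain complex.
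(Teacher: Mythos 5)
Your overall strategy is the classical Garland--Pressley--Segal argument and its ingredients can be assembled into a correct proof, but the first step is justified by a false principle. Compactness of $S^1$ guarantees that the average $\bar\omega=\frac{1}{2\pi}\int_0^{2\pi}R_s^*\omega\,\dd s$ exists and is rotation-invariant; it does \emph{not} guarantee that $\bar\omega$ is cohomologous to $\omega$. The rotations act on $L\mathfrak{g}$ by \emph{outer} automorphisms: the generating derivation $X\mapsto X'$ is not of the form $\operatorname{ad}_V$ for any $V\in L\mathfrak{g}$ (test on constant loops, using that $\mathfrak{g}$ has trivial centre), so Cartan's homotopy $\theta_V=d\iota_V+\iota_V d$ is unavailable and there is no ``explicit primitive 1-cochain.'' Outer automorphisms, even those forming a compact connected group, can genuinely move continuous Lie algebra cohomology classes: on the abelian Lie algebra $\R^2\oplus\R^2$ with $S^1$ rotating the first summand, every $2$-cochain is a cocycle and none is a coboundary, yet the class of $dx_1\wedge dx_2$ averages to zero. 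The assertion that rotation acts trivially on $H^2_c(L\mathfrak{g},\R)$ is essentially equivalent to the vanishing of the nonzero-weight part of the cohomology, which is the actual content of the lemma, so assuming it at the outset makes the argument circular. The same difficulty recurs in your $\mathfrak{g}$-equivariance normalization on $\Omega\mathfrak{g}$ (the constants act there by outer derivations), which you partly anticipate but do not resolve.

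The repair is to reverse the logical order: first decompose $\omega=\sum_k\omega_k$ into rotation-weight components (each $\omega_k$ is a continuous cocycle, and the sum determines $\omega$ on the dense subalgebra of trigonometric polynomial loops); then make each $\omega_k$ $\mathfrak{g}$-invariant within its class -- this \emph{is} legitimate, because the $\mathfrak{g}$-action on $L\mathfrak{g}$ is locally finite and completely reducible, so the Whitehead/Casimir homotopy applies; and only then run your mode computation. That computation, applied to a $\mathfrak{g}$-invariant cocycle of weight $k$, shows it vanishes identically for $k\neq 0$ (not merely that it is a coboundary -- this is what retroactively justifies the averaging) and has the form $B_n=n\,b$ for $k=0$; note that invariant bilinear forms on a semisimple $\mathfrak{g}$ are automatically symmetric, so the recursion $B_{n+m}=B_n+B_m$ does close up as you claim. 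Be aware that the paper itself does none of this: it quotes Pressley--Segal for the $G$-invariant case and the Neeb--Wagemann decomposition of cocycles into ``uncoupled'' summands for the reduction to it, so if you want a self-contained proof you must carry out the reorganized argument above, including the functional-analytic points about convergence of the weight decomposition and the passage between $L\mathfrak{g}$ and $\Omega\mathfrak{g}$ that you flag at the end.
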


\begin{proof}
It is well known that every $G$-invariant 2-cocycle is of the form \cref{FormulaForCocycle}, see e.g., \cite[Prop.~4.2.4]{PressleySegal}.
For not necessarily $G$-invariant 2-cocycles, the result follows from the general results of \cite{NeebWagemann};
see in particular Example~7.2. 
There, cocycles are decomposed as $f_1 + f_2$, which are necessarily \emph{uncoupled} in the authors terminology, as $\mathfrak{g}$ is semisimple. 
It is not hard to figure out that $f_2$ is necessarily a coboundary and $f_1$ gives a cocycle of the form \cref{FormulaForCocycle}.
\end{proof}

\begin{remark}
If $G$ is compact, simply connected and simple, then there is an isomorphism $H^2_c(L \mathfrak{g}, \R) \cong H^2_c(\Omega \mathfrak{g}, \R) \cong H^3(G, \R)$  that sends the subgroup of classes defining a central extension of $LG$, i.e., the image of \cref{CanonicalMap},  onto the subgroup $H^3(G, \Z)$.
\end{remark}
%If $G$ is semisimple, one obtains that $H^2_c(L\mathfrak{g}, \R)$ is isomorphic to $H^3(\mathfrak{g}, \R)$, where the cohomology class represented by the formula \cref{FormulaForCocycle} for a bilinear form $b$ is mapped to the Chevalley-Eilenberg cocycle corresponding to $b$.
%If $G$ is compact, then by averaging, any class in $H^2_c(L\mathfrak{g}, \R)$ can be represented by a $G$-invariant one.
%%As any such $b$ gives rise to a 3-cocycle
%%\begin{equation*}
%%  (x, y, w) \mapsto b( x, [y, w]).
%%\end{equation*}
%%on $\mathfrak{g}$, the group $H^2_c(L\mathfrak{g}, \R)$ is isomorphic to the Lie algebra cohomology group $H^3(\mathfrak{g}, \R)$. 
%One obtains that if $G$ is compact, simply connected and semisimple, then $H^2_c(L\mathfrak{g}, \R)$ is canonically isomorphic to the singular cohomology group $H^3_{\mathrm{sing}}(G, \R)$.
%Under this isomorphism, the Lie algebra cocycles that give rise to group extensions correspond to the integral subgroup $H^3_{\mathrm{sing}}(G, \Z)$ \cite[\S4]{PressleySegal}.

\begin{remark}
Consider the central extension $\widetilde{LG}$ constructed in \cref{ExampleMickelssonExtension} from a 2-cocycle $\omega$ on $L\mathfrak{g}$. If $\omega$ is of the form \cref{FormulaForCocycle} for a bilinear form $b$ on $\mathfrak{g}$, then $\widetilde{LG}$ can be equivalently described as follows.
The elements of $\widetilde{LG}$ can be represented by pairs $(\sigma, z)$, where $\sigma: D^2 \to G$ is a smooth map and where $(\sigma_1, z_1) \sim (\sigma_2, z_2)$ if and only if
  \begin{equation*}
      \frac{z_2}{z_1} = \exp\left(2 \pi i\int_{\Sigma} \overline{\nu}\right).
  \end{equation*}
  Here $\Sigma : D^3 \to G$ is a map whose restriction $\Sigma_{|\partial D^3}$ is given by $\sigma_1$ and $\sigma_2$ on its two hemispheres, and $\overline{\nu}$ is the left invariant 3-form on $G$ associated to the Lie algebra cocycle $\nu(x, y, z) = b([x, y], z)$ on $\mathfrak{g}$.
The group structure is realized with the Mickelsson product, see Theorem 6.4.1 of \cite{brylinski1} and \cite{mickelsson1}.
The projection $\widetilde{LG} \to LG$ is given by sending $(\sigma, z) \mapsto \sigma|_{\partial D^2}$, identifying $\partial D^2 = S^1$.
 This description is equivalent to the one  of \cref{FormulaForCocycle} as the transgression of  $\overline{\nu}$ is cohomologous  to  $\overline{\omega}/2\pi$, see \cite[Prop. 4.4.4]{PressleySegal}.
\end{remark}

\subsection{Restrictions of central extensions}

In the following, we assume -- as before -- that $G$ is a connected (finite-dimensional) Lie group.

\begin{lemma} \label{LemmaAutoCExt}
  If $G$ is semisimple, then the automorphism group of a central extension of $LG$, $\Omega G$, $L_I G$ or $\Omega_IG$, for $I \subseteq S^1$, is canonically isomorphic to $\Hom(\pi_1(G), \U(1))$. 
  In particular, if $G$ is semisimple and simply connected, then the categories $\CExt(LG)$, $\CExt(\Omega G)$ and $\CExt(L_IG)$ have only trivial automorphism groups.
\end{lemma}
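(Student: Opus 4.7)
The plan is to identify automorphisms of a central extension $\pi : \widetilde H \to H$ with Lie group homomorphisms $H \to \U(1)$, then combine \cref{TheoremSemisimpleNoHoms} with a calculation of $\pi_0(H)$ to reduce everything to the character group $\Hom(\pi_1(G), \U(1))$. First I would establish the bijection: given an automorphism $f$ of $\widetilde H$, the map $\tilde h \mapsto f(\tilde h)\tilde h^{-1}$ takes values in $\U(1) = \ker\pi$ (since $f$ covers the identity on $H$), is a group homomorphism (since $\U(1)$ is central in $\widetilde H$), and is trivial on $\U(1) \subset \widetilde H$ (since $f$ is), so it descends to a smooth homomorphism $\bar\phi : H \to \U(1)$. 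Conversely, $\bar\phi$ reconstructs the automorphism as $\tilde h \mapsto \bar\phi(\pi(\tilde h))\tilde h$. These assignments are mutually inverse, yielding a canonical isomorphism $\Aut(\widetilde H) \cong \Hom(H, \U(1))$.

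Next, since $G$ is semisimple, \cref{TheoremSemisimpleNoHoms} implies that any such $\bar\phi$ vanishes on the identity component $H_0$ for $H \in \{LG, \Omega G, L_I G\}$. For $H = \Omega_I G$ the argument of \cref{TheoremSemisimpleNoHoms} extends with only cosmetic changes, since \cref{FlatPathLemma} can be applied at each of the additional required flatness points (namely $\partial I$ and $0$) to show that the Lie algebra of $\Omega_I G$ is still equal to its own commutator subalgebra. Consequently $\bar\phi$ factors through the discrete component group $\pi_0(H)$; conversely, every group homomorphism out of $\pi_0(H)$ lifts smoothly through the projection $H \to \pi_0(H)$. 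This gives $\Aut(\widetilde H) \cong \Hom(\pi_0(H), \U(1))$.

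It remains to compute $\pi_0(H) \cong \pi_1(G)$ in each case. For $LG$ this follows from \cref{HomotopyGroupsLG} at $k=0$ together with the connectedness of $G$; for $\Omega G$ it is classical. For $L_I G$ with $I \subsetneq S^1$ and for $\Omega_I G$, reparametrizing the arc $I$ (compatibly with the flatness conditions at $\partial I$ and, in the $\Omega_I G$ case, additionally at $0$) exhibits a homotopy equivalence with $\Omega G$, so again $\pi_0 \cong \pi_1(G)$. Combining the three steps gives the canonical isomorphism $\Aut(\widetilde H) \cong \Hom(\pi_1(G), \U(1))$, and the ``in particular'' statement is then immediate, since $\pi_1(G) = 0$ for $G$ simply connected. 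The main technical concern is ensuring that the commutator-subalgebra argument and the $\pi_0$-computation both extend uniformly to $\Omega_I G$ (which is not covered directly by \cref{TheoremSemisimpleNoHoms}), but in both cases this is a routine adaptation of the arguments already present in the paper.
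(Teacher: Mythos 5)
Your argument is correct and is essentially the paper's own proof: both identify an automorphism $f$ with the $\U(1)$-valued homomorphism $\tilde h \mapsto f(\tilde h)\tilde h^{-1}$ on $H$, kill its restriction to the identity component via \cref{TheoremSemisimpleNoHoms}, and then identify $\pi_0(H)$ with $\pi_1(G)$. The only difference is that you spell out the $\Omega_I G$ case and the $\pi_0$-computations slightly more explicitly than the paper, which simply declares those cases ``similar.''
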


\begin{proof}
We prove the result for $LG$, the proof for $\Omega G$ and $L_IG$ is similar.
By \cref{HomotopyGroupsLG}, we have $\pi_0(LG) = \pi_1(G)$, hence any group homomorphism $\varphi : \pi_1(G) \to \U(1)$ gives rise to an automorphism $f$ of $\widetilde{LG}$ by setting $f(\Phi) = \varphi([\pi(\Phi)]) \Phi$.
%
\begin{comment}
Here we use that (by the Eckmann-Hilton argument), the group $\pi_1(G)$ can be equivalently described as the set of homotopy classes of \emph{unbased} loops, with the pointwise product in $G$.
\end{comment}
%

Conversely, let $f$ be an automorphism of a central extension $\widetilde{LG}$. 
We define a map $\varphi : LG \to \widetilde{LG}$ by
\begin{equation*}
   \varphi(\gamma) = f(\tilde{\gamma}) \tilde{\gamma}^{-1},
\end{equation*}
where $\tilde{\gamma} \in \widetilde{LG}$ is any lift of $\gamma$. We 
observe that $\varphi$ is well-defined and satisfies $f(\Phi) = \varphi(\pi(\Phi)) \Phi$ for all $\Phi \in \widetilde{LG}$.
$\varphi$ is smooth, since $\pi: \widetilde{LG} \to LG$ has smooth local sections.
Then, since $f$ is base-point preserving, we have 
\begin{equation*}
\pi (\varphi(\tilde{\gamma})) = \pi(f(\tilde{\gamma})) \pi(\tilde{\gamma})^{-1}= \gamma \gamma^{-1} = \const_e, 
\end{equation*}
hence $\varphi(\gamma) \in \U(1) \subset \widetilde{LG}$.
The resulting map $\varphi : LG \to \U(1)$ is a group homomorphism, as
\begin{equation*}
  \varphi(\gamma\eta) = f(\tilde{\gamma})f(\tilde{\eta}) \tilde{\eta}^{-1} \tilde{\gamma}^{-1} = f(\tilde{\gamma})\varphi(\eta) \tilde{\gamma}^{-1} = f(\tilde{\gamma}) \tilde{\gamma}^{-1} \varphi(\eta) = \varphi(\gamma)\varphi(\eta),
\end{equation*}
where we used that $\U(1) \subset \widetilde{LG}$ is central. 
If $G$ is semisimple, \cref{TheoremSemisimpleNoHoms} shows that $\varphi$ is trivial on the identity component $(LG)_0$.
This implies that $\varphi$  factors through $\pi_0(LG) = \pi_1(G)$.
\end{proof}

For our construction of 2-group extensions,  central extensions of $\Omega G$ will be relevant. 
On the other hand, central extensions of $LG$ frequently occur in practice.
Therefore, we shall study the relation between the two types of central extensions.
Clearly, restriction of from $LG$ to $\Omega G$ provides a functor
\begin{equation}
\label{FunctorCExtLGtoCExtOmegaG}
  \CExt(LG) \longrightarrow \CExt(\Omega G).
\end{equation}

\begin{lemma}
\label{LemmaEquivalenceLGOmegaG}
  If $G$ is semisimple and simply connected, then the functor \cref{FunctorCExtLGtoCExtOmegaG} is an equivalence.
\end{lemma}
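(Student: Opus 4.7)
The plan is to reduce the equivalence to a bijection on isomorphism classes, and then to compare both sides via their Lie algebra cocycle classifications. By \cref{LemmaAutoCExt}, for semisimple simply connected $G$, every object of $\CExt(LG)$ and $\CExt(\Omega G)$ has trivial automorphism group; since all morphisms in these groupoids are isomorphisms, both categories are equivalent to the discrete categories on their isomorphism-class sets $h_0$. It therefore suffices to show that restriction induces a bijection $h_0(\CExt(LG)) \to h_0(\CExt(\Omega G))$.

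Since $G$ is simply connected with $\pi_2(G) = 0$, formula \cref{HomotopyGroupsLG} implies that $LG$ and $\Omega G$ are both connected and simply connected. The classifying sequence \cref{ClassifyingSequence} (via \cite[Thm.~7.12]{NeebCentral}) then embeds both $h_0$ into the corresponding second continuous Lie algebra cohomology, with image exactly the classes whose periods lie in $2\pi\Z$; under these embeddings the functor \cref{FunctorCExtLGtoCExtOmegaG} becomes restriction of cocycles along $\Omega\mathfrak{g}\hookrightarrow L\mathfrak{g}$. By \cref{lemma:equivariantcocycle}, every class on either side is represented by a cocycle $\omega_b(X,Y) = \int_{S^1} b(X, Y') \dd t$ for a $G$-invariant symmetric bilinear form $b$ on $\mathfrak{g}$, and restriction visibly sends $[\omega_b]$ to $[\omega_b]$. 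Injectivity of $b \mapsto [\omega_b|_{\Omega\mathfrak{g}}]$ can be established by testing the purported coboundary equation $\omega_b = d\phi$ on loops of the form $X(t) = f(t)x$, $Y(t) = g(t)y$ with $f, g \in C^\infty(S^1)$ flat and vanishing at $0$ and satisfying $\int f g' \dd t \neq 0$; surjectivity is automatic, so restriction induces an isomorphism $H^2_c(L\mathfrak{g}, \R) \cong H^2_c(\Omega\mathfrak{g}, \R)$.

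The remaining step is to match the period subgroups under this isomorphism. The inclusion $\Omega G \hookrightarrow LG$ induces an isomorphism $\pi_2(\Omega G) \cong \pi_2(LG)$ — both equal $\pi_3(G)$ by \cref{HomotopyGroupsLG} together with $\pi_2(G) = 0$ — hence, by Hurewicz, an isomorphism on $H_2$. Naturality of integration then yields that the periods of the left-invariant 2-form $\overline{\omega_b}$ produce the same subgroup of $\R$ whether computed on $LG$ or on $\Omega G$, so the period subgroups correspond under the $H^2_c$-isomorphism above, completing the required bijection on $h_0$. I expect this last step to be the main obstacle: making the Hurewicz/naturality argument fully rigorous requires some care, and a cleaner route is to pass through the transgression identity of \cite[Prop.~4.4.4]{PressleySegal}, which relates $\overline{\omega_b}/2\pi$ to the left-invariant 3-form associated to $\nu_b(x,y,z) = b([x,y],z)$ and thereby reduces both period lattices to the lattice of periods of $\overline{\nu_b}$ over $\pi_3(G)$.
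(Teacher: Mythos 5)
Your proposal is correct in substance and follows the paper for the first half (reduction to isomorphism classes via \cref{LemmaAutoCExt}, the commutative square comparing $h_0$ with $H^2_c$, injectivity of the vertical maps from simple connectedness of $LG$ and $\Omega G$, and the isomorphism $H^2_c(L\mathfrak{g},\R)\cong H^2_c(\Omega\mathfrak{g},\R)$ via \cref{lemma:equivariantcocycle}), but it handles essential surjectivity by a genuinely different route. The paper constructs the extending object directly: it integrates the $\mathrm{Ad}_G$-invariance of $\omega$ to a $G$-action on $\widetilde{\Omega G}$ and sets $\widetilde{LG}:=\widetilde{\Omega G}\rtimes G$ under the identification $LG=\Omega G\rtimes G$. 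You instead stay inside the classification and match the period lattices: since $\pi_2(G)=0$, the inclusion $\Omega G\hookrightarrow LG$ induces an isomorphism on $\pi_2$, hence by Hurewicz on $H_2$, so every smooth $2$-cycle in $LG$ is homologous to one in $\Omega G$ and $\Per\overline{\omega}$ is the same on both sides; therefore a class with periods in $2\pi\Z$ on $\Omega G$ already lies in the image of \cref{CanonicalMap} for $LG$. Your route buys a purely topological argument that avoids integrating the adjoint action, at the cost of the smooth-versus-continuous homology and Hurewicz technicalities in the Fr\'echet setting, which you rightly flag (and which your fallback via the transgression identity of Pressley--Segal also circumvents); the paper's route is constructive and sidesteps period bookkeeping entirely. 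Two small points: your injectivity test is incomplete as literally stated --- from $\omega_b=d\phi$ one gets $b(x,y)\int_{S^1}fg'\,\dd t=-\phi(fg\,[x,y])$, which is only contradictory after comparing two factorizations $f_1g_1=f_2g_2$ with $\int f_1g_1'\,\dd t\neq\int f_2g_2'\,\dd t$ (the underlying fact is standard and the paper leans on it just as implicitly); and one should note that the left-invariant form on $LG$ restricts on the subgroup $\Omega G$ to the left-invariant form of the restricted cocycle, so that cohomologous cocycles on $\Omega\mathfrak{g}$ indeed have equal periods.
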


\begin{proof}
By \cref{LemmaAutoCExt}, we only have to check that the functor is a bijection on isomorphism classes.
To this end, consider the commutative diagram
\begin{align}
\label{LGOmegaGLg}
\xymatrix{
 h_0(\CExt(LG)) \ar[r] \ar[d] & h_0(\CExt(\Omega G) \ar[d]\\
 H^2_c(L\mathfrak{g}, \R) \ar[r] & H^2_c(\Omega \mathfrak{g}, \R).
}
\end{align}
where the top horizontal map is induced by the functor \eqref{FunctorCExtLGtoCExtOmegaG}, the bottom horizontal map is pullback along the Lie algebra homomorphism $\Omega \mathfrak{g} \to L\mathfrak{g}$ and the vertical maps are the canonical map \eqref{ClassifyingSequence} for $H= \Omega G$ and $H= L G$, respectively.
That $G$ is simply connected implies that both vertical maps are injective.
On the other hand, as $G$ is semisimple, \cref{lemma:equivariantcocycle} implies that the bottom map is an isomorphism (as both consist of classes determined by cocycles of the specific form \cref{FormulaForCocycle}, which gives the same classification).
We conclude that the top horizontal map must be injective. 

On the other hand, given a central extension $\widetilde{\Omega G}$ of $\Omega G$, one can construct a central extension $\widetilde{LG}$ of $LG$ such that $\widetilde{LG}|_{\Omega G} = \widetilde{\Omega G}$ in the following way. As $G$ is semisimple, we may assume that $\widetilde{\Omega G}$ is classified by a cocycle $\omega$ of the specific form  \cref{FormulaForCocycle}. Because $G$ is simply-connected, the $\mathrm{Ad}_G$-invariance of $\omega$ integrates to a $G$-action on $\widetilde{\Omega G}$ lifting the conjugation action on $\Omega G$. Identifying $LG = \Omega G \rtimes G$, defining  $\widetilde{LG} := \widetilde{\Omega G} \rtimes G$ gives the claimed central extension.       
\end{proof}

\begin{example}
If $G$ is not simply connected, the conclusion of \cref{LemmaEquivalenceLGOmegaG} is generally false.
Namely, if $\widetilde{G}$ is a finite cover of $G$ (these are defined by elements of $\Hom(\pi_1(G), \U(1))$), then pullback of $\widetilde{G}$ along the evaluation homomorphism $LG \to G$ yields a central extension of $LG$ which is trivial when restricted to $\Omega G$.
\end{example}

If $I \subsetneq (0, 2\pi)$, we can further restrict a central extension of $\Omega G$ along the inclusion $\Omega_I G \subset \Omega G$, which gives functors
\begin{equation}
\label{FurtherRestrictionFunctors}
  \CExt(\Omega G) \longrightarrow \CExt(\Omega_I G).
\end{equation}

\begin{lemma}
\label{LemmaRestrictionIso}
If $G$ is semisimple, then the functor \cref{FurtherRestrictionFunctors} is an equivalence whenever $I \subsetneq (0, 2\pi)$ is connected and non-empty.
\end{lemma}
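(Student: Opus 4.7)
The plan is to follow the template of the proof of \cref{LemmaEquivalenceLGOmegaG}. By \cref{LemmaAutoCExt}, both automorphism groups of central extensions of $\Omega G$ and of $\Omega_I G$ are canonically identified with $\Hom(\pi_1(G), \U(1))$, and the restriction functor respects these identifications because the inclusion $\Omega_I G \hookrightarrow \Omega G$ induces an isomorphism on $\pi_0$ (see below). This handles full faithfulness, so it remains to verify bijectivity on isomorphism classes.

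We consider the commutative diagram of classifying sequences \cref{ClassifyingSequence}:
\begin{equation*}
\xymatrix{
h_0(\CExt(\pi_1(G))) \ar[r] \ar[d]^{=} & h_0(\CExt(\Omega G)) \ar[r] \ar[d] & H^2_c(\Omega \mathfrak{g}, \R) \ar[d] \\
h_0(\CExt(\pi_1(G))) \ar[r] & h_0(\CExt(\Omega_I G)) \ar[r] & H^2_c(\Omega_I \mathfrak{g}, \R).
}
\end{equation*}
The rows are exact in the middle because $\pi_1(\Omega G) = \pi_1(\Omega_I G) = \pi_2(G) = 0$. The right vertical is an isomorphism: the proof of \cref{lemma:equivariantcocycle} uses only semisimplicity of $\mathfrak{g}$ and therefore applies to $\Omega_I \mathfrak{g}$ with the identical formula, parameterizing both cohomology groups by $G$-invariant bilinear forms $b$ via $\omega_b(X, Y) = \int_{S^1} b(X, Y') \dd t$ and $\omega_b^I(X, Y) = \int_I b(X, Y') \dd t$ respectively; these agree when the first is restricted to $\Omega_I \mathfrak{g}$, so on the level of invariant forms the right vertical is the identity. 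The left vertical is also the identity via the natural identification $\pi_0(\Omega_I G) \cong \pi_0(\Omega G) = \pi_1(G)$, obtained by constructing a weak homotopy equivalence $\Omega_I G \simeq \Omega G$: choose a smooth degree-one map $\phi : S^1 \to S^1$ with image $\bar I$, flat at $0$ and at the endpoints of $I$; then $\phi^* : \Omega G \to \Omega_I G$, $\gamma \mapsto \gamma \circ \phi$, is a Fr\'echet Lie group homomorphism which, together with the inclusion $\Omega_I G \hookrightarrow \Omega G$, is mutually homotopy-inverse via a smooth deformation of $\phi$ through degree-one reparametrizations of $S^1$.

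With these ingredients, a diagram chase using exactness in the middle of the rows yields that the middle vertical is injective. For surjectivity one additionally needs that the image of the middle top horizontal map (i.e., those cohomology classes that integrate to honest central extensions) corresponds under the right vertical isomorphism to the analogous image on the lower row; equivalently, for each invariant form $b$ one needs $\Per \omega_b \subseteq 2\pi\Z$ computed in $\Omega G$ to be equivalent to $\Per \omega_b^I \subseteq 2\pi\Z$ computed in $\Omega_I G$. This follows from the homotopy equivalence $\Omega_I G \simeq \Omega G$, which transports any smooth $2$-cycle in $\Omega G$ into one in $\Omega_I G$ via the reparametrization $\phi^*$, with the same integral of $\omega_b$ by support considerations.

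I expect the main obstacle to be making the reparametrization argument rigorous at the smooth Fr\'echet-Lie level: one must verify that $\phi^*$ preserves the flatness conditions defining $\Omega G$ and $\Omega_I G$, construct the smooth null-homotopies of $\phi^*\circ i$ and $i\circ\phi^*$ to the identity through families of reparametrizations, and check that the induced maps on $\pi_0$ and on smooth $2$-cycles indeed furnish the identifications used above.
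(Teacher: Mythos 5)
Your proposal follows essentially the same route as the paper: full faithfulness via \cref{LemmaAutoCExt} together with the isomorphism $\pi_0(\Omega_I G)\cong\pi_0(\Omega G)$, and bijectivity on isomorphism classes via a reparametrization identifying $\Omega G$ with $\Omega_I G$, the normal form \cref{FormulaForCocycle} of cocycles, and exactness in the middle of \cref{ClassifyingSequence}. Two small points to tighten: the paper obtains the classification of $H^2_c(\Omega_I\mathfrak{g},\R)$ not by re-running the argument of \cref{lemma:equivariantcocycle} for $\Omega_I\mathfrak{g}$ (where the cited results do not literally apply) but by transporting everything along the genuine group isomorphism $\varphi^*\colon\Omega G\to\Omega_I G$ induced by an affine-linear diffeomorphism $I\to(0,2\pi)$ --- which also lets one transport the given central extension itself, avoiding your separate period-group comparison; and your map $\phi$ should be a surjective degree-one map collapsing $S^1\setminus I$ to the basepoint, not a map with image $\bar I$.
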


\begin{proof}
We show that the functor is fully faithful.
To this end, since we are dealing with groupoids, it suffices to show that \cref{FurtherRestrictionFunctors} induces an isomorphism of automorphism groups.
Let $f$ be an automorphism of a central extension $\widetilde{\Omega G}$ of $\Omega G$, inducing an isomorphism $f_I$ of the restricted central extension $\widetilde{\Omega_I G}$.
As $G$ is semisimple, we obtain from (the proof of) \cref{LemmaAutoCExt} that $f(\Phi) = \varphi([\pi(\Phi)]) \Phi$ for some group homomorphism $\varphi: \pi_1(G) \to \U(1)$. 
Now if $f_I$ is trivial, we have $\Phi = f_I(\Phi) = \varphi([\pi(\Phi)])\Phi$ for all $\Phi \in \widetilde{\Omega_I G}$, so $\varphi([\pi(\Phi)]) = 1$.
But this implies that $\varphi$ (hence $f$) is trivial, as any element of $\pi_1(G)$ can be represented by a loop in $\Omega_I G$.
This shows that the induced map on automorphism groups is injective.

Similarly, if $f_I$ is any automorphism of $\widetilde{\Omega_I G}$, then $f_I(\Phi) = \varphi([\pi(\Phi)]) \Phi$ for some $\varphi : \pi_0(\Omega_IG) \to \U(1)$.
But $\pi_0(\Omega_I G) = \pi_0(\Omega G) = \pi_1(G)$, so $f(\Phi) = \varphi([\pi(\Phi)])\Phi$ is an extension of $f_I$ to an automorphism of $\widetilde{\Omega G}$.
This shows that the induced map on automorphism groups is surjective, so the functor is fully faithful. 

It remains to show that the functor is essentially surjective.
Observe that $\Omega_I G = \Omega_{I^\circ} G$, where $I^\circ$ is the interior of $I$, hence we may assume that $I$ is open.
Since $I$ is connected, there exists a diffeomorphism $\varphi : I \to (0, 2\pi)$, which we may choose $\varphi$ to be affine-linear. 
Pre-composition with $\varphi$ induces a group isomorphism $\varphi^* : \Omega G \to \Omega_IG$, which gives rise to an equivalence $\CExt(\Omega_IG) \to \CExt(\Omega G)$. 
Since $G$ is semisimple, any central extension $\widetilde{\Omega G}$ of $\Omega G$ can be represented by a cocycle of the form \cref{FormulaForCocycle}.
It follows that any central extension of $\Omega_I G$ is represented by a cocycle of the form
\begin{equation*}
\begin{aligned}
  \varphi^*\omega(X, Y) &= \int_0^{2\pi} b\bigl((\varphi_*X)(t), (\varphi_*Y)^\prime(t)\bigr) dt \\
  &= \int_0^{2\pi} b\bigl(X(\varphi(t)), Y^\prime(\varphi(t))\varphi^\prime(t) \bigr) dt\\
  &= \int_I b\bigl(X(t), Y^\prime(t)\bigr)dt.
\end{aligned}
\end{equation*}
But this is just the restriction of the cocycle $\omega$.

This shows that if $\widetilde{\Omega_IG}$ is a central extension of $\Omega_IG$, then there exists a central extension $\widetilde{\Omega G}^\prime$ of $\Omega G$ whose  restriction $\widetilde{\Omega_I G}^\prime$ to $\Omega_IG$ is classified by the same Lie algebra cocycle.
Since $\pi_1(\Omega G) = \pi_2(G) = 0$, the sequence \cref{ClassifyingSequence} is exact in the middle, and so $\widetilde{\Omega_I G}^\prime$ and $\widetilde{\Omega_IG}$ differ by a central extension of $\pi_0(\Omega G) = \pi_1(G)$. 
But since the inclusion $\Omega_I G \to \Omega G$ induces an isomorphism on $\pi_0$, we can modify $\widetilde{\Omega G}^\prime$ to achieve $\widetilde{\Omega_I G}^\prime \cong \widetilde{\Omega_IG}$.
Hence, the functor \cref{FurtherRestrictionFunctors} is essentially surjective.
\end{proof}

\begin{example}
Without the assumption of semisimplicity, \cref{LemmaRestrictionIso} is false in general:  
\cref{ExampleNotSemisimple} provides an example of a non-trivial central extension of $\Omega G$ such that the restriction to a suitable $\Omega_I G$ is trivial.
\end{example}

%\begin{lemma} \label{LemmaDiffAction}
%If $LG$ is compact, simply connected and semisimple, any central extension $\widetilde{LG}$ of $LG$ is equivariant for the diffeomorphism group of the circle $\mathrm{Diff}(S^1)$, which acts on $LG$ by precomposition.
%\end{lemma}

%Explicitly, this means that for any diffeomorphism $\kappa: S^1 \to S^1$, there exists a lift $\tilde{\kappa}^* : \widetilde{LG} \to \widetilde{LG}$ covering the pullback map $\kappa^*: LG \to LG$.
%$\tilde{\kappa}^*$ is equivariant for the $\U(1)$-action on $\widetilde{LG}$ if $\kappa$ is orientation preserving and anti-equivariant if $\kappa$ is orientation reversion, where the latter means that
%\begin{equation*}
%  \tilde{\kappa}^*(z\Phi) = \overline{z} \, \tilde{\kappa}^*(\Phi)
%\end{equation*}
%for all $z \in \U(1)$ and $\Phi \in \widetilde{LG}$.

%\begin{proof}
%For the orientation preserving diffeomorphism group $\mathrm{Diff}^+(S^1)$, this is the statement of \cite[Thm.~(4.4.1), (ii)]{PressleySegal}.
%To get the result for $\mathrm{Diff}^-(S^1)$, it suffices to consider the flip diffeomorphism $\sigma(t) = -t$.
%\end{proof}

\subsection{Disjoint commutativity}

\label{disjoint commutativity}

Let $G$ be a finite-dimensional, connected Lie group.
It turns out that to construct a 2-group from central extensions of the loop group $LG$, it is important that these central extensions satisfy a certain extra property, \emph{disjoint commutativity}, which was first studied systematically in \cite[\S3.3]{WaldorfTransgressive}.

\begin{definition}[Disjoint commutativity] 
  A central extension $\widetilde{LG}$ of $LG$ is called \emph{disjoint commutative} if for all $I, J \subset S^1$ with $I \cap J = \emptyset$ the subgroups $\widetilde{L_I G}$ and $\widetilde{L_J G}$ of $\widetilde{LG}$ commute.
\end{definition}

The following lemma is crucial.
Recall that a bihomomorphism $b$ on a group $K$ is called \emph{skew} if $b(g, h) = b(h, g)^{-1}$.
Moreover, by an \emph{interval}, we mean an open, nonempty and connected proper subset $I \subset S^1$.

\begin{lemma}
\label{LemmaObstructionBihomomorphism}
 Let $\widetilde{LG}$ be a central extension of $LG$, and suppose that  $G$ is semisimple. Then, there exists a unique bihomomorphism
\begin{equation}
b : \pi_1(G) \times \pi_1(G) \longrightarrow \U(1)
\end{equation}
such that, for disjoint intervals $I,J \subset S^1$ and all $\gamma\in L_IG$, $\eta\in L_JG$, we have 
\begin{equation*}
b([\gamma],[\eta]) =\tilde{\gamma} \tilde{\eta} \tilde{\gamma}^{-1} \tilde{\eta}^{-1} 
\end{equation*}
where $\tilde{\gamma}$ and $\tilde{\eta}$ are arbitrary lifts of $\gamma$, $\eta$ to $\widetilde{LG}$. Moreover, $b$ is skew. 
\end{lemma}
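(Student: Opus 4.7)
The plan is to construct $b$ directly from the commutator map in $\widetilde{LG}$, and then descend it to $\pi_1(G) \times \pi_1(G)$. I would proceed in four steps.

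\emph{Step 1: The commutator pairing.} For disjoint intervals $I, J$ and elements $\gamma \in L_IG$, $\eta \in L_JG$, the loops $\gamma$ and $\eta$ commute pointwise in $G$: at every $t \in S^1$, at least one of $\gamma(t), \eta(t)$ equals $e$. Hence they commute in $LG$, and for any lifts $\tilde\gamma, \tilde\eta \in \widetilde{LG}$, the commutator
\[
  c(\gamma, \eta) := \tilde\gamma\, \tilde\eta\, \tilde\gamma^{-1}\, \tilde\eta^{-1}
\]
projects to the identity element of $LG$ and therefore lies in $\U(1) = \ker \pi$. Centrality of $\U(1) \subseteq \widetilde{LG}$ makes $c(\gamma, \eta)$ independent of the choice of lifts. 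Since $\pi$ admits smooth local sections, $c$ is smooth, and the standard fact that commutators taking values in a central subgroup are multiplicative in each variable makes $c \colon L_IG \times L_JG \to \U(1)$ a smooth bihomomorphism.

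\emph{Step 2: Descent via semisimplicity.} For each fixed $\eta \in L_JG$, the map $c(\,\cdot\,, \eta) \colon L_IG \to \U(1)$ is a smooth Lie group homomorphism, which by \cref{TheoremSemisimpleNoHoms} vanishes on the identity component $(L_IG)_0$. The symmetric argument applies to the second variable, so $c$ descends to a bihomomorphism $\pi_0(L_IG) \times \pi_0(L_JG) \to \U(1)$. The inclusion $L_IG \hookrightarrow LG$ induces an isomorphism $\pi_0(L_IG) \cong \pi_0(LG) = \pi_1(G)$: surjectivity follows by reparametrizing any based loop via a smooth degree-one self-map of $S^1$ that collapses $S^1 \setminus I$ to the basepoint, and injectivity by the analogous argument applied to a null-homotopy. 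This yields, for each disjoint pair $(I, J)$, a bihomomorphism $b_{I, J} \colon \pi_1(G) \times \pi_1(G) \to \U(1)$.

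\emph{Step 3: Independence of $(I, J)$.} I would first establish two partial statements: for fixed $J$, the bihomomorphism $b_{I, J}$ is independent of $I$ among intervals disjoint from $J$, and symmetrically in $J$. Each follows from the inclusion $L_IG \subseteq L_V G$ with $V := S^1 \setminus \overline{J}$ (valid whenever the open intervals $I$ and $J$ are disjoint), combined with the naturality of the identification $\pi_0(L_\bullet G) \cong \pi_1(G)$ under inclusions of intervals. For two arbitrary disjoint pairs $(I, J)$ and $(I', J')$, I then run the chain
\[
  b_{I, J} \;=\; b_{I_0, J} \;=\; b_{I_0, J''} \;=\; b_{I', J''} \;=\; b_{I', J'},
\]
where $I_0 \subseteq I$ is any sufficiently small sub-interval and $J''$ is an auxiliary interval disjoint from $I_0 \cup I'$; the interval $J''$ exists because choosing $I_0$ small makes $I_0 \cup I'$ a proper subset of $S^1$. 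The four equalities are successive applications of the two partial statements, showing $b_{I, J} =: b$ is well-defined.

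\emph{Step 4: Skew-ness, uniqueness, and the main obstacle.} Skew-symmetry is immediate from the group identity $[x, y][y, x] = 1$:
\[
  b([\gamma], [\eta]) \cdot b([\eta], [\gamma]) \;=\; (\tilde\gamma \tilde\eta \tilde\gamma^{-1} \tilde\eta^{-1})(\tilde\eta \tilde\gamma \tilde\eta^{-1} \tilde\gamma^{-1}) \;=\; 1.
\]
Uniqueness is automatic: any bihomomorphism obeying the defining formula is determined on every pair $(\alpha, \beta) \in \pi_1(G) \times \pi_1(G)$, since every such pair can be realized by disjointly supported loops (fix any disjoint $(I, J)$ and represent $\alpha, \beta$ via reparametrization in $L_IG$ and $L_JG$). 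The main obstacle is Step 3, particularly handling degenerate configurations (such as $I \cup I' = S^1$), which is the reason for introducing the auxiliary shrinking to $I_0$.
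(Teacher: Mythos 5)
Your proposal is correct and follows essentially the same route as the paper: the commutator pairing $B_{IJ}$ valued in the central $\U(1)$, descent to $\pi_0(L_IG)\times\pi_0(L_JG)\cong\pi_1(G)\times\pi_1(G)$ via \cref{TheoremSemisimpleNoHoms}, independence of the pair of intervals, and skewness from the identity $[x,y][y,x]=1$. The only real difference is organizational: in the independence step you enlarge to the maximal interval $V=S^1\setminus\overline{J}$ and chain through one auxiliary interval $J''$, whereas the paper argues via overlapping pairs and a case analysis of four-point configurations on the circle -- both are valid, and your version arguably packages the combinatorics a bit more cleanly (just be slightly more careful that ``$I_0\cup I'$ proper'' should really be ``$\overline{I_0}\cup\overline{I'}\neq S^1$'' to guarantee $J''$ exists).
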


\begin{proof}

For $\Phi \in \widetilde{L_I G}$, $\Psi \in \widetilde{L_JG}$, we have $\pi(\Phi^{-1} \Psi \Phi \Psi^{-1}) = \const_e$, hence the commutator $\Phi^{-1} \Psi \Phi \Psi^{-1}$ is contained in $\U(1) \subset \widetilde{LG}$.
Observe that this commutator only depends on $\pi(\Phi)$ and $\pi(\Psi)$, as replacing $\Phi = z\Phi$ and $\Psi = w \Psi$, $z, w \in \U(1)$, leads to the same result.
Hence we obtain a map
\begin{equation} \label{BihomomorphismB}
  B_{IJ}: {L_IG} \times {L_JG} \longrightarrow \U(1), \qquad (\gamma, \eta) \longmapsto \tilde{\gamma} \tilde{\eta} \tilde{\gamma}^{-1} \tilde{\eta}^{-1}.
\end{equation}
where $\tilde{\gamma}$ and $\tilde{\eta}$ are arbitrary lifts of $\gamma$, $\eta$ to the central extension.
$B_{IJ}$ is smooth as $\widetilde{LG}$ admits smooth local sections. 
We calculate
\begin{equation*}
\begin{aligned}
  B_{IJ}({\gamma}_1, {\eta})B_{IJ}({\gamma}_2, {\eta}) &= (\tilde{\gamma}_1 \tilde{\eta} \tilde{\gamma}_1^{-1} \tilde{\eta}^{-1})(\tilde{\gamma}_2 \tilde{\eta} \tilde{\gamma}^{-1}_2 \tilde{\eta}^{-1})\\
  &= \tilde{\gamma}_1 (\tilde{\gamma}_2 \tilde{\eta} \tilde{\gamma}_2^{-1} \tilde{\eta}^{-1}) \tilde{\eta} \tilde{\gamma}^{-1}_1 \tilde{\eta}^{-1}\\
  &= (\tilde{\gamma}_1 \tilde{\gamma}_2) \tilde{\eta} (\tilde{\gamma}_1 \tilde{\gamma}_2)^{-1} \tilde{\eta}^{-1} \\&= B_{IJ}({\gamma}_1{\gamma}_2, {\eta})
\end{aligned}
\end{equation*}
and 
\begin{equation*}
\begin{aligned}
  B_{IJ}({\gamma}, {\eta}_1)B_{IJ}({\gamma}, {\eta}_2) &= (\tilde{\gamma} \tilde{\eta}_1 \tilde{\gamma}^{-1} \tilde{\eta}_1^{-1})(\tilde{\gamma} \tilde{\eta}_2 \tilde{\gamma}^{-1} \tilde{\eta}_2^{-1})\\
  &= \tilde{\gamma} \tilde{\eta}_1 \tilde{\gamma}^{-1} (\tilde{\gamma} \tilde{\eta}_2 \tilde{\gamma}^{-1} \tilde{\eta}_2^{-1})\tilde{\eta}_1^{-1}\\
  &= \tilde{\gamma} (\tilde{\eta}_1\tilde{\eta}_2) \tilde{\gamma}^{-1}(\tilde{\eta}_1\tilde{\eta}_2)^{-1} 
  \\&= B_{IJ}({\gamma}, {\eta}_1{\eta}_2),
\end{aligned}
\end{equation*}
using that  $B_{IJ}$ takes values in the center of $\widetilde{LG}$.
  Hence, $B_{IJ}$ is a bihomomorphism.

Since $G$ is semisimple, \cref{TheoremSemisimpleNoHoms} implies that $B_{IJ}$ must be constant on the connected components of $L_IG$ and $L_JG$.
It follows that there exists a unique bihomomorphism 
\begin{equation*}
B^0_{IJ}: \pi_0(L_I G) \times \pi_0(L_J G) \longrightarrow \U(1) \qquad \text{with} \qquad B^0_{IJ}([\gamma], [\eta])=B_{IJ}(\gamma, \eta).
\end{equation*}
%We now compare these bihomomorphisms for different pairs of disjoint intervals $I, J$ and $I^\prime, J^\prime \subset S^1$.

Since $I$ and $J$ are intervals, then the inclusion $L_IG \to \Omega G$ is a homotopy equivalence, hence induces an isomorphism $\pi_0(L_IG) \cong \pi_0(\Omega G) = \pi_1(G)$.
We conclude that in this case, there exists a unique bihomomorphism $b_{IJ}$ on $\pi_1(G)$ such that
\begin{equation*}
  B^0_{IJ}([\gamma], [\eta]) = b_{IJ}([\gamma], [\eta]).
\end{equation*}
whenever $\gamma \in L_I G$ and $\eta \in L_JG$.
Another way to say this is that, given an interval $I \subset S^1$, each element of $\pi_1(G)$ has a representative $\gamma$ supported in $L_IG$, and two such representatives of the same element of $\pi_1(G)$ are already homotopic in $L_IG$.
We now show that all these bihomomorphism $b_{IJ}$ is independent of the choice of $I$ and $J$.
\begin{enumerate}[(a)]
\item
\label{ZeroCase}
First observe that if $I^\prime \subseteq I$ and $J^\prime \subseteq J$, then $B_{I^\prime J^\prime}$ is the restriction of $B_{IJ}$ to $L_{I^\prime} G \times L_{J^\prime} G$.

\item 
\label{FirstCase}
Suppose now that $I, J$ and $I^\prime, J^\prime$ are two pairs of disjoint intervals of $S^1$ with the property that $I\cap I^\prime$ and $J \cap J^\prime$ are non-empty.
Then, using \eqref{ZeroCase}, we see that for $\gamma \in L_{I \cap I^\prime} G$, $\eta \in L_{J \cap J^\prime} G$, we have
\begin{equation*}
  b_{I^\prime J^\prime}([\gamma], [\eta]) = B_{I^\prime J^\prime}(\gamma, \eta) = B_{I \cap I^\prime, J \cap J^\prime}(\gamma, \eta) = B_{IJ}(\gamma, \eta) = b_{IJ}([\gamma], [\eta]).
\end{equation*}
Hence $b_{IJ} = b_{I^\prime J^\prime}$.
\item
\label{SecondCase}
Next we show that $b_{IJ} = b_{JI}$.
Choose $s \in I$ and $t \in J$ and let $K, L \subset S^1$ be the two disjoint intervals such that $\partial K = \partial L = \{s, t\}$.
\begin{equation*}
\begin{tikzpicture}
 \draw[line width=1pt] (-2,0) circle (30pt);
 \filldraw[color=blue, line width=1pt] (-3.05,0) circle (3pt);
 \node at  (-3.35,0) {$s$};
 \filldraw[color=red, line width=1pt] (-1.05,-0.45) circle (3pt);
 \node at  (-0.76,-0.52) {$t$};
    \draw [line width=2pt,brown,domain=-15:170] plot ({1.15*cos(\x)-2}, {1.15*sin(\x)});
 \node at  (-1.7,1.4) {$K$};
   \draw [line width=2pt,green,domain=190:325] plot ({1.15*cos(\x)-2}, {1.15*sin(\x)});
 \node at  (-2.2,-1.4) {$L$};
\end{tikzpicture}
\end{equation*}
By construction, all the intersections $I \cap K$, $K \cap J$, $J \cap L$, $L \cap I$ are non-empty.
Therefore, by \eqref{FirstCase}, $b_{IJ} = b_{KL} = b_{JI}$. 
\item Now let $I, J$ and $I^\prime, J^\prime$ be two arbitrary pairs of disjoint intervals of $S^1$.
Choose pairwise distinct points $s \in I$, $t \in J$, $s^\prime \in I^\prime$, $t^\prime \in J^\prime$.
There are two different possible basic configurations for $s, s^\prime, t, t^\prime$, as depicted below.
\begin{equation*}
\begin{tikzpicture}
 \draw[line width=1pt] (-2,0) circle (30pt);
 \filldraw[color=blue, line width=1pt] (-3.05,0) circle (3pt);
 \node at  (-3.35,0) {$s$};
 \filldraw[color=blue, line width=1pt] (-2.45,0.95) circle (3pt);
 \node at  (-2.62,1.2) {$s^\prime$};
 \filldraw[color=red, line width=1pt] (-1.05,0.45) circle (3pt);
 \node at  (-0.76,0.52) {$t$};
 \filldraw[color=red, line width=1pt] (-2.25,-1) circle (3pt);
 \node at  (-2.25,-1.35) {$t^\prime$};
 \draw[line width=1pt] (2,0) circle (30pt);
  \filldraw[color=blue, line width=1pt] (2.95,0.4) circle (3pt);
  \node at  (3.23,0.67) {$s^\prime$};
 \filldraw[color=blue, line width=1pt] (1,-0.3) circle (3pt);
 \node at  (0.71,-0.32) {$s$};
 \filldraw[color=red, line width=1pt] (1.3,0.77) circle (3pt);
 \node at  (1.15,1.05) {$t$};
 \filldraw[color=red, line width=1pt] (2.0,-1.05) circle (3pt);
 \node at  (2.0,-1.37) {$t^\prime$};
\end{tikzpicture}
\end{equation*}
In the first configuration, we can choose disjoint intervals $K, L \subset S^1$ such that $s, s^\prime \in K$ and $t, t^\prime \in L$.
Then, by construction, $K$ has non-empty intersection with both $I$ and $I^\prime$ and $L$ has non-empty intersection with both $J$ and $J^\prime$.
Consequently, by \eqref{FirstCase}, we obtain $b_{IJ} = b_{KL} = b_{I^\prime J^\prime}$.
In the second configuration, we can choose disjoint intervals $K, L \subset S^1$ such that $s, t^\prime \in K$ and $t, s^\prime \in L$.
Then $K$ has non-empty intersection with both $I$ and $J^\prime$ and $L$ has non-empty intersection with both $J$ and $I^\prime$.
Therefore, by \eqref{FirstCase} and \eqref{SecondCase}, $b_{IJ} = b_{KL} = b_{J^\prime I^\prime} = b_{I^\prime J^\prime}$.
\end{enumerate}
We conclude that the bihomomorphism $b_{IJ}$ is independent of the choice of disjoint intervals $I, J \subset S^1$. 
We write $b$ for this bihomomorphism on $\pi_1(G)$.

It follows from the definition of the bihomomorphisms $B_{IJ}$ that 
\begin{equation*}
  b_{IJ}(g, h) = b_{JI}(h, g)^{-1}, \qquad g, h \in \pi_1(G)
\end{equation*}
for any pair of disjoint intervals $I, J \subset S^1$.
%
\begin{comment}
Let $g = [\gamma]$, $h = [\eta]$ with $\gamma \in L_IG$, $\eta \in L_JG$.
Then 
\begin{equation*}
\begin{aligned}
  b_{IJ}(g, h) &= b_{IJ}([\gamma]_{\Omega G}, [\eta]_{\Omega G}) \\
  &= B^0_{IJ}([\gamma]_{L_IG}, [\eta]_{L_IG})\\
  &= B_{IJ}(\gamma, \eta) \\
  &= \tilde{\gamma}\tilde{\eta}\tilde{\gamma}^{-1}\tilde{\eta}^{-1} \\
  &= (\tilde{\eta}\tilde{\gamma}\tilde{\eta}^{-1}\tilde{\gamma}^{-1})^{-1}\\
  &= B_{JI}(\eta, \gamma)^{-1}\\
  &= B^0_{JI}([\eta]_{L_JG}, [\gamma]_{L_IG})^{-1}\\
  &= b_{JI}([\eta]_{\Omega G}, [\gamma]_{\Omega G})^{-1}
\end{aligned}
\end{equation*}
\end{comment}
%
Since $b = b_{IJ} = b_{JI}$, this shows that the bihomomorphism $b$ is skew.
\end{proof}

\begin{theorem}
\label{TheoremBihomomorphism}
A central extension  $\widetilde{LG}$  of the loop group $LG$ of a semisimple Lie group $G$ is disjoint commutative if and only if the bihomomorphism $b$ of \cref{LemmaObstructionBihomomorphism} vanishes. 
\end{theorem}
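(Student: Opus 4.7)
For the forward direction, if $\widetilde{LG}$ is disjoint commutative then for any two disjoint intervals $I, J \subset S^1$ and any $\gamma \in L_IG$, $\eta \in L_JG$ the lifts $\tilde\gamma, \tilde\eta$ commute, so $b([\gamma],[\eta]) = 1$. As shown in the proof of \cref{LemmaObstructionBihomomorphism}, every class in $\pi_1(G)$ admits a representative in $L_IG$ for any given interval $I$, hence $b$ vanishes identically on $\pi_1(G) \times \pi_1(G)$.

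For the converse, assume $b = 1$. I must show that for \emph{arbitrary} disjoint subsets $I, J \subset S^1$ and any $\gamma \in L_IG$, $\eta \in L_JG$, the commutator $[\tilde\gamma, \tilde\eta] \in \U(1)$ is trivial. I proceed by a sequence of three reductions. First, replacing $I$ by $I_0 := \{t \in S^1 : \gamma(t) \neq e\}$ (which is open, contained in $I$, and still contains the support of $\gamma$) and analogously for $J$, I may assume that both $I$ and $J$ are open. Second, decomposing $I = \bigsqcup_k I_k$ into its at-most-countably-many connected components, I define $\gamma^{(N)}$ to be equal to $\gamma$ on $I_1 \cup \cdots \cup I_N$ and $e$ elsewhere, and similarly $\eta^{(M)}$. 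These are smooth because a smooth loop on $S^1$ that vanishes on the closed set $S^1 \setminus I$ is automatically flat at $\partial I$, and a compactness argument using this flatness shows $\gamma^{(N)} \to \gamma$ in the $C^\infty$ topology. Since $\widetilde{LG} \to LG$ admits smooth local sections, the lifts can be chosen to converge, $\tilde{\gamma^{(N)}} \to \tilde\gamma$; by continuity of the commutator map, it suffices to prove $[\tilde{\gamma^{(N)}}, \tilde{\eta^{(M)}}] = 1$ for all $N, M$.

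Third, I write $\gamma^{(N)} = \gamma_1 \cdots \gamma_N$ as a pointwise product in $LG$, where $\gamma_k \in L_{I_k}G$ has support contained in the single interval $I_k$; the factors commute in $LG$ since their supports are pairwise disjoint. Choosing lifts $\tilde\gamma_k \in \widetilde{L_{I_k}G}$, the product $\tilde\gamma_1 \cdots \tilde\gamma_N$ is a lift of $\gamma^{(N)}$ up to a scalar in $\U(1)$, which is irrelevant for computing commutators; similarly for $\eta^{(M)}$. Using that $\U(1)$ is central in $\widetilde{LG}$, the bihomomorphism identities established in the proof of \cref{LemmaObstructionBihomomorphism} yield
\begin{equation*}
[\tilde{\gamma^{(N)}}, \tilde{\eta^{(M)}}] \;=\; \prod_{k=1}^N\prod_{l=1}^M [\tilde\gamma_k, \tilde\eta_l] \;=\; \prod_{k, l} b([\gamma_k], [\eta_l]) \;=\; 1,
\end{equation*}
where the final equality uses the hypothesis $b = 1$.

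The main obstacle is the $C^\infty$-approximation step: verifying $\gamma^{(N)} \to \gamma$ in all derivatives requires combining the flatness of $\gamma$ at $\partial I$ with a compactness argument (if $\gamma^{(n)}$ failed to vanish uniformly on $I_j$ as $j \to \infty$, one could extract a convergent sequence of witnesses with limit in $\partial I$, contradicting flatness). The remaining arguments are a clean consequence of the bihomomorphism structure of the commutator together with the hypothesis $b = 1$.
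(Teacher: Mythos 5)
Your proof is correct and follows essentially the same route as the paper: the forward direction is immediate, and for the converse both arguments reduce to open sets, decompose them into connected components, handle finitely many components via the bihomomorphism property of the commutator, and pass to infinitely many components by an approximation argument (the paper phrases this as density of $\bigcup_{n} L_{I_1 \sqcup \cdots \sqcup I_n}G$ in $L_IG$ and the resulting direct-sum decomposition of $\pi_0(L_IG)$, whereas you phrase it as $C^\infty$-convergence of truncations plus continuity of the commutator map). One caveat, which the paper's proof shares: a smooth loop vanishing on $S^1 \setminus I$ need not be flat at a point of $\partial I$ that is isolated in $S^1 \setminus I$ (i.e., an endpoint shared by two adjacent components of $I$), so both your truncations $\gamma^{(N)}$ and the implicit factorization $\gamma^{(N)} = \gamma_1 \cdots \gamma_N$ require grouping such adjacent components together (or a small cutoff argument) in that edge case; your compactness argument for the convergence itself is unaffected, since the relevant limit points there are accumulation points of infinitely many components, where flatness does hold.
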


\begin{corollary}
\label{CorollarySemisimpleSimplyConnectedDisjointCommutative}
If $G$ is simply connected and semisimple, then all central extensions of $LG$ are disjoint commutative.
\end{corollary}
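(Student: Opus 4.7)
The plan is to derive the corollary immediately from the preceding \cref{TheoremBihomomorphism}, which reduces disjoint commutativity of a central extension $\widetilde{LG}$ to the vanishing of the bihomomorphism
\begin{equation*}
b : \pi_1(G) \times \pi_1(G) \longrightarrow \U(1)
\end{equation*}
constructed in \cref{LemmaObstructionBihomomorphism}. Under the hypothesis that $G$ is simply connected we have $\pi_1(G) = 0$, so the source of $b$ is the trivial group, and $b$ is automatically the zero bihomomorphism; therefore, by \cref{TheoremBihomomorphism}, every central extension $\widetilde{LG}$ of $LG$ is disjoint commutative.

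The only subtlety worth spelling out is that the hypotheses of \cref{TheoremBihomomorphism} and \cref{LemmaObstructionBihomomorphism} are exactly the semisimplicity of $G$, which is part of our assumption, so we are allowed to invoke them with no further work. The simple connectedness of $G$ is then used solely to trivialize $\pi_1(G)$, making the obstruction $b$ disappear without any further argument about commutators of lifts.

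I do not foresee any obstacle: the statement is a one-line corollary of \cref{TheoremBihomomorphism} under the stronger hypothesis $\pi_1(G)=0$, and no additional structural input on the central extension or its Lie algebra cocycle is required.
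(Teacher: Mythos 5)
Your proposal is correct and coincides with the paper's intended argument: the corollary is stated as an immediate consequence of \cref{TheoremBihomomorphism}, since simple connectedness forces $\pi_1(G)=0$ and hence the obstruction bihomomorphism $b$ of \cref{LemmaObstructionBihomomorphism} (which exists by semisimplicity) is trivially zero. No further work is needed.
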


\begin{proof}[of \cref{TheoremBihomomorphism}]
It is clear that $b$ is trivial when $\widetilde{LG}$ is disjoint commutative.
If $b$ is trivial, then the bihomomorphisms $B_{IJ}^0$ (and consequently the $B_{IJ}$) for disjoint intervals $I,J$ are trivial as well, so that
 $\widetilde{LG}$ is disjoint commutative for intervals.

It remains to treat the case of general disjoint subsets $I,J \subset S^1$.
Observe that $L_I G = L_{I^\circ} G$,    where $I^\circ$ is the interior of $I$, hence we can assume throughout that $I,J \subseteq S^1$ are open.
Suppose now that $I = I_1 \sqcup I_2 \sqcup \dots$ is a disjoint union of possibly infinitely many intervals. 
Then $L_{I_1 \sqcup \dots \sqcup I_n}G \cong L_{I_1} G \times \cdots \times L_{I_n} G$ for each $n \in \N$.
Moreover, the union
\begin{equation*}
\bigcup_{n=1}^\infty L_{I_1 \sqcup \dots \sqcup I_n}G \subset L_I G
 \end{equation*}
 is dense.
  This implies that the group of connected components of $L_IG$ is the direct \emph{sum}
 \begin{equation*}
 \pi_0(L_I G) = \bigoplus_{k=1}^\infty \pi_0(L_{I_k} G).
 \end{equation*}
 We therefore obtain that if $J= J_1 \sqcup J_2 \sqcup \dots \subset S^1$ is another such subset, then the bihomomorphism $B^0_{IJ}$ is determined by the bihomomorphisms $B^0_{I_kJ_l}$, $j, k \in \N$, which vanish by assumption.
\end{proof}

Let $\widetilde{LG}$ be a central extension of $LG$.
Any $\U(1)$-valued group 2-cocycle $\kappa$ on $\pi_1(G)$ can be used to modify the group product of $\widetilde{LG}$ according to the formula
\begin{equation} 
\label{ModifiedProduct}
  \Phi \star \Psi = \kappa([\pi(\Phi)], [\pi(\Psi)]) \cdot \Phi \Psi.
\end{equation}
We assume throughout that $\kappa$ is normalized, in the sense that $\kappa(g, e) = \kappa(e, g) = \kappa(e, e) = 1$.
This is equivalent to requiring that the unit elements for the two products coincide.
%
\begin{comment}
  These three conditions are equivalent.
  To see this, set $g_1 = h^{-1}$, $g_2 = h$, $g_3 = 1$ in the cocycle identity
\begin{equation*}
  \kappa(g_1, g_2) \kappa(g_1g_2, g_3) = \kappa(g_1, g_2g_3)\kappa(g_2, g_3),
\end{equation*}
which gives
\begin{equation*}
  \kappa(h^{-1}, h) \kappa(1, 1) = \kappa(h^{-1}, h) \kappa(h, 1), \qquad \text{hence} \qquad \kappa(h, 1) = \kappa(1, 1) = 1.
\end{equation*}
Analogously, setting $g_1 = 1$, $g_2 = h$, $g_3 = h^{-1}$ yields $\kappa(1, 1) = \kappa(1, h)$.
Normalized or not, the unit of the new product is $\kappa(e, e)^{-1} \cdot 1$.
\end{comment}
%
Normalization is no serious restriction as every cocycle is cohomologous to a normalized one.
Moreover, if $\kappa = d\rho$ for some $\U(1)$-valued 1-cocycle, then the central extension $\widetilde{LG}$ with the modified product \eqref{ModifiedProduct} is isomorphic to the original central extension.

\begin{lemma}
\label{LemmaModifiedProduct}
Let $G$ be a semisimple Lie group and let $b$ be the obstruction bihomomorphism of \cref{LemmaObstructionBihomomorphism} for a central extension $\widetilde{LG}$.
Then, the obstruction bihomomorphism $b^\prime$ for the central extension with the modified product \cref{ModifiedProduct} is given by
\begin{equation*}
  b^\prime(g, h) = b(g, h) \cdot \sskew \kappa (g, h)^{-1},
\end{equation*}
where
\begin{equation*}
\sskew \kappa (g, h) := \kappa(g, h) \kappa(h, g)^{-1}
\end{equation*}
is the \emph{skew} of $\kappa$.
\end{lemma}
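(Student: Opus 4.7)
The plan is to compute the commutator under the modified product $\star$ directly and read off the result. Fix disjoint intervals $I, J \subset S^1$, elements $\gamma \in L_IG$, $\eta \in L_JG$, and lifts $\tilde{\gamma}, \tilde{\eta} \in \widetilde{LG}$, and write $g = [\gamma], h = [\eta] \in \pi_1(G)$. Since the projection $\pi$ is still a group homomorphism with respect to $\star$ (the factor $\kappa \in \U(1)$ projects trivially), the commutator $\tilde{\gamma} \star \tilde{\eta} \star \tilde{\gamma}^{\star-1} \star \tilde{\eta}^{\star-1}$ still projects to the identity and hence lies in $\U(1)$, so it defines $b^\prime(g,h)$ by the same recipe as in \cref{LemmaObstructionBihomomorphism}.

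First I would dispense with the inverses. Because $\kappa$ is normalized and $1 \in \widetilde{LG}$ remains the unit for $\star$, the equation $\Phi \star \Phi^{\star-1} = 1$ forces
\begin{equation*}
\Phi^{\star-1} = \kappa(g, g^{-1})^{-1} \Phi^{-1}, \quad \text{where } g = [\pi(\Phi)],
\end{equation*}
and similarly for $\Psi^{\star-1}$. Next I would iteratively expand the four-fold $\star$-product, recalling that $\U(1)$ is central in $\widetilde{LG}$ so that all scalar factors can be collected to the front. Using that $\pi_1(G)$ is abelian (as it always is for a connected Lie group), we find $[\pi(\tilde{\gamma} \star \tilde{\eta} \star \tilde{\gamma}^{\star-1})] = h$, and after collecting factors the computation yields
\begin{equation*}
b^\prime(g,h) \;=\; \kappa(gh, g^{-1}) \, \kappa(g,h) \, \kappa(g, g^{-1})^{-1} \, \kappa(h, h^{-1}) \, \kappa(h, h^{-1})^{-1} \cdot b(g,h),
\end{equation*}
so that the $\kappa(h, h^{-1})$ factors cancel automatically.

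The third step is to simplify $\kappa(gh, g^{-1})\kappa(g,g^{-1})^{-1}$ using the 2-cocycle identity
\begin{equation*}
\kappa(g_1, g_2)\kappa(g_1 g_2, g_3) = \kappa(g_1, g_2 g_3)\kappa(g_2, g_3).
\end{equation*}
Setting $(g_1, g_2, g_3) = (h, g, g^{-1})$ and using normalization and commutativity of $\pi_1(G)$ gives $\kappa(gh, g^{-1}) = \kappa(h, g)^{-1}\kappa(g, g^{-1})$. Substituting back, the factors $\kappa(g, g^{-1})$ and $\kappa(g, g^{-1})^{-1}$ cancel and we are left with
\begin{equation*}
b^\prime(g, h) \;=\; \kappa(g, h)\,\kappa(h, g)^{-1} \cdot b(g, h) \;=\; b(g, h) \cdot \sskew\kappa(g, h)^{\pm 1},
\end{equation*}
where the sign is fixed by our convention for the commutator in \cref{LemmaObstructionBihomomorphism}. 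Matching the sign to that convention delivers the claimed formula.

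The main obstacle is purely bookkeeping: each application of $\star$ introduces a $\kappa$-factor whose arguments must be tracked carefully (and normalization is essential to guarantee that the $\kappa(g, g^{-1})$ and $\kappa(h, h^{-1})$ terms indeed drop out). No further ideas are needed beyond the cocycle identity and the fact that $\pi_1(G)$ is abelian; semisimplicity enters only through the existence of the bihomomorphism $b$, guaranteed by \cref{LemmaObstructionBihomomorphism}.
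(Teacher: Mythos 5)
Your proposal follows essentially the same route as the paper's own proof: compute the $\star$-commutator directly, use normalization to identify the $\star$-inverses, collect the central $\kappa$-factors (with the $\kappa(h,h^{-1})$ terms cancelling), and simplify $\kappa(gh,g^{-1})$ via the cocycle identity with $(g_1,g_2,g_3)=(h,g,g^{-1})$ — your intermediate formula is literally the one appearing in the paper. The only loose end is your ``$\pm 1$'' hedge on the exponent of $\sskew\kappa$: the computation as you (and the paper) set it up yields exponent $+1$ with the commutator convention $\tilde\gamma\tilde\eta\tilde\gamma^{-1}\tilde\eta^{-1}$ of \cref{LemmaObstructionBihomomorphism}, so you should pin this down rather than leave it ambiguous (the discrepancy with the stated $-1$ is harmless for \cref{ThmModifiedProduct}, since $\sskew\kappa$ and $\sskew\kappa^{-1}$ range over the same alternating bihomomorphisms).
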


It is well-known that the skew of a 2-cocycle on an \emph{abelian} group is always a bihomomorphism; 
notice here that $\pi_1(G)$ is abelian as $G$ is a Lie group.

\begin{proof}
Let $\Phi \in L_{I} G$ and $\Psi \in L_{J} G$ for $I=(0,\pi)$ and $J=(\pi,2\pi)$, and let $g = [\pi(\Phi)], h = [\pi(\Psi)] \in \pi_1(G)$.
The inverses of $\Phi$ and $\Psi$ with respect to the modified product \cref{ModifiedProduct} are
\begin{equation*}
  \Phi^{\star -1} = \kappa(g, g^{-1})^{-1} \Phi^{-1}, \qquad \Psi^{\star-1} = \kappa(h, h^{-1}) \Psi^{-1}.
\end{equation*}
Then, using that $\pi_1(G)$ is abelian, 
\begin{equation*}
\begin{aligned}
  b^\prime(g, h) &= B^\prime_{IJ}(\pi(\Phi), \pi(\Psi)) 
  \\&= \Phi\star \Psi \star \Phi^{\star -1}  \star \Psi^{\star -1}\\
  &= \kappa(g, g^{-1})^{-1} \kappa(h, h^{-1})^{-1}\Phi \star \Psi \star \Phi^{-1} \star \Psi^{-1}\\
  &= \kappa(g, g^{-1})^{-1} \kappa(h, h^{-1})^{-1}\kappa(g, h) \kappa(g h, g^{-1}) \underbrace{\kappa(g h g^{-1}, h^{-1})}_{= \kappa(h, h^{-1})} \Phi \Psi\Phi^{-1}\Psi^{-1}\\
  &= \kappa(g, g^{-1})^{-1}\kappa(g, h) \kappa(g h, g^{-1})  b(g, h)
  \end{aligned}
\end{equation*}
Since $\kappa$ is a group cocycle, we have
\begin{equation*}
  \kappa(g h, g^{-1}) = \kappa(hg, g^{-1}) = \kappa(h, g)^{-1}\kappa(h, 1)\kappa(g, g^{-1}) = \kappa(h, g)^{-1}\kappa(g, g^{-1}),
\end{equation*}
as $\kappa$ is assumed to be normalized.
Plugging this into the previous formula yields the desired result.
\end{proof}

\begin{example}
The above results provide many examples of central extensions of $LG$ for non-simply connected Lie groups $G$ that are not disjoint commutative.
For example, suppose we have $\xi \in \U(1)$ and $p,q\in \Z$ such that $\xi^p = \xi^q = 1$.
Then, the group 2-cocycle $\kappa$ on $\Z/p\Z \times \Z/q\Z$ given by
\begin{equation*}
  \kappa((k_1, k_2), (l_1, l_2)) = \xi^{k_1l_2},
\end{equation*}
has non-trivial skew.
For example, with the choices $\xi = -1$ and $p = q = 2$, the trivial central extension of $L(\SO(m)\times \SO(n))$ ($m, n \geq 3$) modified by the cocycle $\kappa$ provides a central extension that is not disjoint commutative.
\end{example}

\begin{example}
\label{ExampleNotSemisimple}
Things change completely upon leaving the realm of semisimple Lie groups.
An example of a non-disjoint commutative central extension in the case that $G$ has trivial fundamental group is the following. 
Consider $G= \R^+$ and let $\widetilde{LG}$ be the central extension corresponding to the group cocycle
\begin{equation*}
  \kappa(\gamma, \eta) = \exp(i \log \gamma(s) \cdot \log \eta(t)),
\end{equation*}
for $s, t \in S^1$ fixed.
Since $L\R^+$ is abelian, the bihomomorphism $B_{IJ}$ from \cref{BihomomorphismB} is the restriction of a bihomomorphism $B$ defined on all of $LG$, which is just the skew of $\kappa$.
This is non-zero whenever $s \neq t$.
\end{example}

\begin{example}
A further example of a central extension of $L\U(1)$ that is not disjoint commutative is given as Example~4.12 in \cite{WaldorfTransgressive}.
\end{example}

Recall that a bihomomorphism $b$ on an abelian group $K$ is \emph{alternating} if $b(g, g) = 1$.
Any alternating bihomomorphism is skew, but the converse is not always true in the presence of 2-torsion in the target.
By definition, the skew of a group cocycle is always alternating.
Moreover, an easy calculation shows that the skew of a coboundary is zero.
Hence we obtain a well-defined group homomorphism
\begin{equation}
\label{SurjectiveToAlt}
  H^2(K, \U(1)) \longrightarrow \Alt^2(K, \U(1))
\end{equation}
from the second group cohomology of $K$ to the group of alternating bihomomorphisms on $K$.
It is a fact that this group homomorphism is always surjective \cite[Proposition 3.3]{NeebQuantumTori}.

Given a non-disjoint commutative central extension $\widetilde{LG}$, one may ask whether we can modify the product by a group cocycle $\kappa$ such that $\widetilde{LG}$ becomes disjoint commutative.
To investigate this question, consider the map that assigns to a central extension the obstruction bihomomorphism from Lemma~\ref{LemmaObstructionBihomomorphism}.
By Lemma~\ref{LemmaModifiedProduct} and the surjectivity of \eqref{SurjectiveToAlt}, this map descends to a group homomorphism
\begin{equation}
\label{StronglyDisjointCommutative}
  \frac{h_0\bigl(\CExt(LG)\bigr)}{H^2(\pi_1(G), \U(1))} ~~\longrightarrow~~ \frac{\Skew^2(\pi_1(G), \U(1))}{\Alt^2(\pi_1(G), \U(1))},
\end{equation}
where $\Skew^2(\pi_1(G), \U(1))$ denotes the group of skew bihomomorphisms on $\pi_1(G)$ and $H^2(\pi_1(G), \U(1))$ acts on the set of isomorphism classes of central extensions by modifying the product according to \eqref{ModifiedProduct}.
It is easy to see that the quotient on the right hand is just the 2-torsion subgroup of $\pi_1(G)$.
%
\begin{comment}
Let $\pi_1(G) = \Z_{n_1} \oplus \dots \oplus \Z_{n_\ell}$, $n_1, \dots, n_\ell \in \N$, be the direct sum decomposition of the free abelian group $\pi_1(G)$.
Then a bihomomorphism $b : \pi_1(G) \times \pi_1(G) \to \U(1)$ is of the form
\begin{equation*}
b(k, k^\prime) = \prod_{ij=1}^\ell \zeta_{ij}^{k_i\cdot k_j},
\qquad
  \begin{pmatrix}
  \zeta_{11} & \cdots & \zeta_{1\ell} \\
  \vdots & \ddots & \vdots \\
  \zeta_{\ell 1} & \cdots & \zeta_{\ell\ell}
  \end{pmatrix}
  \in \U(1)^{\ell \times \ell},
\end{equation*}
with the constraints 
\begin{equation*}
\zeta_{ij}^{n_i} = \zeta_{ij}^{n_\ell} = 1.
\end{equation*}
Such a bihomomorphism is skew if $\zeta_{ji} = \zeta_{ij}^{-1}$ and alternating if additionally $\zeta_{ii} = 1$.
Hence the quotient is isomorphic to
\begin{equation*}
 Q = \Bigl\{ \begin{pmatrix}
  \zeta_{11} & &  \\
   & \ddots &  \\
   &  & \zeta_{\ell\ell}
  \end{pmatrix} \Bigl| \zeta_{ii}^{n_i} = 1, \zeta_{ii}^{-1} = \zeta_{ii} \Bigr\}
\end{equation*}
Now, if $\zeta_{ii}$ satisfies $\zeta_{ii}^{n_i} = 1$ and $\zeta_{ii}^{-1} = \zeta_{ii}$, then either $\zeta_{ii} = 1$ or $\zeta_{ii} = -1$, which is only possible if $n_i$ is even.
Hence $Q = (\Z/2\Z)^k$, where $k = \#\{i \mid n_i ~\text{even}\}$.
But each $Z_{n_i}$ for $n_i$ even contains precisely one element of order 2, hence $Q$ is isomorphic to the 2-torsion subgroup of $\pi_1(G)$.
\end{comment}
Thus, we obtain the following result.

\begin{theorem}
\label{ThmModifiedProduct}
Let $G$ be a semisimple Lie group.
If $\pi_1(G)$ has no 2-torsion, then for any central extension $\widetilde{LG}$ of $LG$, there exists a group 2-cocycle $\kappa \in H^2(\pi_1(G), \U(1))$ such that $\widetilde{LG}$ with the product modified by $\kappa$ is disjoint commutative.
\end{theorem}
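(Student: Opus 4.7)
The strategy is to combine \cref{TheoremBihomomorphism,LemmaModifiedProduct} with the surjectivity of the skew map from group cohomology to alternating bihomomorphisms.

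First, by \cref{TheoremBihomomorphism}, the modified central extension is disjoint commutative precisely when its obstruction bihomomorphism vanishes, and by \cref{LemmaModifiedProduct} that modified obstruction equals $b \cdot \sskew\kappa^{-1}$, where $b$ denotes the obstruction bihomomorphism of the unmodified extension $\widetilde{LG}$. Thus the problem reduces to finding a normalized group 2-cocycle $\kappa$ on $\pi_1(G)$ satisfying $\sskew\kappa = b$.

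Second, since the skew of any 2-cocycle is automatically alternating, a necessary condition is that $b$ itself be alternating. Skewness of $b$ gives $b(g,g)^2 = 1$, so $b(g,g) \in \{\pm 1\}$ for every $g \in \pi_1(G)$. A short computation using bihomomorphicity together with the cancellation $b(g,h)b(h,g) = 1$ shows that $b(gh,gh) = b(g,g)\,b(h,h)$, so that $g \mapsto b(g,g)$ is a group homomorphism $\pi_1(G) \to \{\pm 1\}$. I would then invoke the structure of $\pi_1(G)$ as a finitely generated abelian group together with the no-2-torsion hypothesis to conclude that this homomorphism is trivial, whence $b$ is alternating.

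Third, I would apply the surjectivity of the map \cref{SurjectiveToAlt} (\cite[Proposition~3.3]{NeebQuantumTori}) to produce a normalized $\kappa \in H^2(\pi_1(G), \U(1))$ with $\sskew\kappa = b$. Then by \cref{LemmaModifiedProduct} the obstruction bihomomorphism of the modified extension vanishes, and \cref{TheoremBihomomorphism} concludes that the modified extension is disjoint commutative. The main obstacle is the second step: passing from the no-2-torsion hypothesis on $\pi_1(G)$ to the vanishing of the character $g \mapsto b(g,g)$; the first and third steps are essentially mechanical applications of the quoted results.
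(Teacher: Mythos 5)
Your proposal follows the same route as the paper: both reduce, via \cref{TheoremBihomomorphism} and \cref{LemmaModifiedProduct}, to finding a normalized cocycle $\kappa$ with $\sskew\kappa = b$, and both obtain $\kappa$ from the surjectivity of \cref{SurjectiveToAlt} once $b$ is known to be alternating. Your observation that $g \mapsto b(g,g)$ is a character with values in $\{\pm1\}$ is precisely the map that realizes the paper's quotient $\Skew^2(\pi_1(G),\U(1))/\Alt^2(\pi_1(G),\U(1))$ inside $\Hom(\pi_1(G), \Z/2\Z)$, so the two arguments are essentially identical, and your first and third steps are correct.

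The one place to be careful is exactly the step you flag as the main obstacle. For a finitely generated abelian group $K$ one has $\Hom(K,\Z/2\Z) \cong \Hom(K/2K,\Z/2\Z)$, and writing $K \cong \Z^r \oplus T$ with $T$ finite, the absence of $2$-torsion kills $T/2T$ but leaves $(\Z/2\Z)^r$. Hence ``finitely generated with no $2$-torsion'' forces the character $g \mapsto b(g,g)$ to vanish only when $\pi_1(G)$ is finite. If $\pi_1(G)$ has a free summand --- and semisimple groups with $\pi_1(G)\cong\Z$ exist, e.g.\ $\SL(2,\R)$ --- then $b(m,n)=(-1)^{mn}$ is a skew, non-alternating bihomomorphism, so skewness alone does not give the conclusion, and one would additionally have to argue that such a $b$ cannot arise as the obstruction of an actual central extension. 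This is not a defect specific to your write-up: the paper's own assertion that $\Skew^2/\Alt^2$ equals the $2$-torsion subgroup of $\pi_1(G)$ carries the same implicit restriction to finite fundamental group. In the finite case (in particular for compact semisimple $G$) your argument is complete and matches the paper's; in the infinite case both arguments have the same gap.
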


\begin{example}
\label{ExampleSOd}
Consider $G = \SO(d)$ for $d \geq 5$. 
Then the group of isomorphism classes of central extensions of $L\SO(d)$ is isomorphic to $\Z \times \Z_2$, where the first factor is called the \emph{level} and the second factor  comes from central extensions of $\SO(d)$ (compare Lemma~4.8 of \cite{ludewig2022clifford}).
Since $\pi_1(\SO(d)) = \Z_2$ and $H^2(\Z_2, \U(1)) = 0$, there are no non-trivial product modifications by group cocycles.
It turns out that the obstruction bihomomorphism of the generator of $h_0(\CExt(L\SO(d)))$ is the (non-alternating) $\U(1)$-valued skew bihomomorphism
\begin{equation*}
  b(k_1, k_2) = (-1)^{k_1k_2}
\end{equation*}
on $\Z_2$.
Hence, a central extension of $L\SO(d)$ is disjoint commutative if and only if it is of even level.
Of course, by Corollary~\ref{CorollarySemisimpleSimplyConnectedDisjointCommutative}, all central extensions of $L\SO(d)$ become disjoint commutative when pulled back along $L \Spin(d) \to L\SO(d)$.
\end{example}

Example~\ref{ExampleSOd} shows that the group homomorphism \eqref{StronglyDisjointCommutative} is  surjective for $G=\SO(d)$.
We do not know whether it is surjective for every semisimple Lie group $G$.

%\begin{theorem}
%\label{MainTheoremDC}
% Let $G$ be semisimple with $\pi_1(G)$ finite and let $\widetilde{LG}$ be a central extension of $LG$.
% Then there exists a unique class in $H^2(\pi_1(G), \U(1))$ such that modifying the product of $\widetilde{LG}$ according to \cref{ModifiedProduct} with a normalized representative $\kappa$ of this class makes $\widetilde{LG}$ disjoint commutative.
%\end{theorem}

%\begin{remark}
%We remark that $\pi_1(G)$ is always finite if $G$ is compact, hence \cref{MainTheoremDC} holds in particular for compact simple Lie groups.
%Moreover, since $H^2(\Z/p\Z, \U(1)) = 0$ for all $p \in \N$, we obtain that if $\pi_1(G)$ is finite cyclic, then \emph{any} central extension of $LG$ is disjoint commutative.
%\end{remark}
%
%
%\begin{proof}[of \cref{MainTheoremDC}]
%  Given a disjoint central extension $\widetilde{LG}$ with obstruction bihomomorphism $b$, choose a normalized cocycle $\kappa$ such that $b = \sskew \kappa$, using  \cref{Skewmapiso}.
%  Now, by \cref{LemmaModifiedProduct}, the obstruction bihomomorphism of the central extension with the product modified by $\kappa$ is zero.
%  Hence the resulting central extension is disjoint commutative by \cref{LemmaObstructionBihomomorphism}.
%\end{proof}

%\begin{corollary}
%If $G$ is semisimple and $\pi_1(G)$ is \ml{finite} cyclic, then we have $H^2(\pi_1(G), \U(1)) = 0$, hence each central extension $\widetilde{LG}$ is disjoint commutative.
%\end{corollary}

\section{Lie 2-groups from loop group extensions}

\label{section-3}

This section contains the main result of the present article, namely, the construction of Lie 2-groups from loop group extensions. In \cref{SectionStrict2GroupsCrossedModules} we recall the relevant facts about crossed modules and Lie 2-groups, and \cref{SectionCrossedModulesFromCExt} contains the main construction. \Cref{SectionFusionFactorizations} concerns the notion of a fusion factorization that allows one to give our Lie 2-groups a more convenient form. In \cref{classification-of-Lie-2-groups} we show that our Lie 2-groups deliver 3-connected covering groups, in particular, models for the string 2-group. 

\subsection{Strict Lie 2-groups and crossed modules}
\label{SectionStrict2GroupsCrossedModules}

%In the following sections, by a (possibly infinite-dimensional) Lie group, we mean a mani\-fold $G$, locally modelled on a locally convex vector space, which carries a group structure such that multiplication and inversion are  smooth maps. 
%\todo[inline]{K: Ich bin da noch nicht zufrieden mit. Ich halte es nicht für gut, wenn wir die Bedeutung von "Lie group" zwischen den beiden Sections verändern. Und warum sagen wir nicht einfach Frechet Lie group? Was nützt die Verallgemeinerung auf locally convex spaces?}
We recall that a \emph{strict Lie 2-group} is a groupoid $\Gamma = (\Gamma_0, \Gamma_1, s, t, i, \circ, \inv)$ whose set $\Gamma_0$ of objects and whose set $\Gamma_1$ of morphisms are (possibly Fr\'echet) Lie groups, whose source and target map $s, t : \Gamma_1 \to \Gamma_0$,  composition $\circ : \Gamma_1 \times_{t,s} \Gamma_1 \to \Gamma_1$, identity map $i : \Gamma_0 \to \Gamma_1$, and inversion (with respect to composition) $\inv : \Gamma_1 \to \Gamma_1$ are all smooth group homomorphisms.
We note  that if $\Gamma_1$ and $\Gamma_0$ are finite-dimensional, then the fibre product $\Gamma_1 \times_{t,s} \Gamma_1$ exists since $s$ and $t$ are surjective Lie group homomorphisms, hence submersions; in the infinite-dimensional setting, the existence of the fibre product is a further assumption that we need to put. We also note that the group
\begin{equation*}
  \pi_1(\Gamma) := \ker(s) \cap \ker(t) \subseteq \Gamma_1
\end{equation*}
is abelian.

\begin{comment}
As they admit the section $i$, both $s$ and $t$ are surjective, and hence, as Lie group homomorphisms, surjective submersions.
Therefore, the fibre product $\Gamma_1 \times_{s,t} \Gamma_1$ exists as a Lie group.
The condition that $\Gamma_1 \times_{t,s} \Gamma_1 \stackrel\circ\to \Gamma_1$ is a group homomorphism says that we require
\begin{equation*}
(x_2 \circ x_1)\cdot (y_2 \circ y_1) = x_2y_2 \circ x_1y_1
\end{equation*} 
to hold for all $x_1,x_2,y_1,y_2\in \Gamma_1$ with $s(x_2)=t(x_1)$ and $s(y_2)=t(y_1)$. 
\end{comment}

 When constructing  strict Lie 2-groups  it is worthwhile to notice that composition and inversion are already determined by the remaining structure. Indeed, it is straightforward to see that 
\begin{equation} \label{FormulaForComposition}
x \circ y = x \, i(s(x))^{-1} y = x \, i(t(y))^{-1} y \text{,}
\end{equation}
for composable morphisms $x, y \in \Gamma_1$, i.e., morphisms such that $s(x) = t(y)$.
It follows from this that the inverse of a morphism $x \in \Gamma_1$ with respect to composition satisfies
\begin{equation}
\label{FormulaForInversion}
        \inv(x)=i(s(x))x^{-1}i(t(x))\text{.}
\end{equation}
\begin{comment}
        Indeed,
        \begin{align*}
                \inv(x) \circ x &= i(s(x))x^{-1}i(t(x)) i(s(x^{\circ-1}))^{-1} x\\
                                &= i(s(x))x^{-1}i(t(x)) i(t(x))^{-1} x \\
                                &= i(s(x))
        \end{align*}
\end{comment}
Moreover, in a strict Lie 2-group the subgroups $\ker(s)$ and $\ker(t)$ of $\Gamma_1$ commute: let $x \in \ker(s)$, $y \in \ker(t)$, and let $e\in \Gamma_0$ be the unit element. Then 
\begin{equation} \label{eq:KerSKerT}
        y x  = (e \circ y) (x \circ e) = (e \cdot x) \circ (y \cdot e) = x \circ y = x \, i(s(x))^{-1} y = xy.
\end{equation}
We have the following converse of these three observations.

\begin{lemma}
        \label{LemmaMinimalData2Group}
        Suppose $\Gamma_0$ and $\Gamma_1$ are Lie groups and $s, t:\Gamma_1 \to \Gamma_0$ and $i: \Gamma_0\to\Gamma_1$ are smooth group homomorphisms such that:
        \begin{enumerate}[{\normalfont (a)}]

                \item 
                        \label{cor:MinimalData2Group:a}
                        $s \circ i = \id_{\Gamma_0} = t \circ i$.

                \item
                        \label{cor:MinimalData2Group:b}
                        $\mathrm{ker}(s)$ and $\mathrm{ker}(t)$ are commuting Lie subgroups.

        \end{enumerate} 
        Then, together with the composition defined by \cref{FormulaForComposition} and the inversion defined in \cref{FormulaForInversion}, this structure constitutes a strict Lie 2-group. 
\end{lemma}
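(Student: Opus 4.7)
The plan is to verify the groupoid axioms for $(\Gamma_0, \Gamma_1, s, t, i, \circ, \inv)$ with $\circ$ and $\inv$ defined by \eqref{FormulaForComposition} and \eqref{FormulaForInversion}, then check that $\circ$ is a group homomorphism (the interchange law), and finally address smoothness. All algebraic axioms reduce to identities in the Lie group $\Gamma_1$, which separates cleanly from the smoothness discussion.

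For the groupoid structure, I would first use (a) to compute $s(x \circ y) = s(x) s(x)^{-1} s(y) = s(y)$ and dually $t(x \circ y) = t(x)$. The identity laws $i(t(x)) \circ x = x$ and $x \circ i(s(x)) = x$ are immediate cancellations in \eqref{FormulaForComposition}, associativity $(x \circ y) \circ z = x \circ (y \circ z)$ follows by direct substitution, and the inverse laws $\inv(x) \circ x = i(s(x))$, $x \circ \inv(x) = i(t(x))$ are one-line verifications using \eqref{FormulaForInversion}.

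The heart of the argument, where assumption (b) enters, is the interchange law. Given composable pairs $(x_2, x_1)$ and $(y_2, y_1)$, I would expand both $(x_2 \circ x_1)(y_2 \circ y_1)$ and $(x_2 y_2) \circ (x_1 y_1)$ via \eqref{FormulaForComposition}, using that $i$ is a homomorphism so that $i(s(x_2) s(y_2))^{-1} = i(s(y_2))^{-1} i(s(x_2))^{-1}$. After cancelling $x_2$ on the left and $y_1$ on the right, the required identity collapses to
\begin{equation*}
    u v = v u, \qquad u := i(s(x_2))^{-1} x_1, \qquad v := y_2 \, i(s(y_2))^{-1}.
\end{equation*}
Using (a) together with the compatibilities $s(x_2) = t(x_1)$ and $s(y_2) = t(y_1)$, the computations $t(u) = s(x_2)^{-1} t(x_1) = e$ and $s(v) = s(y_2) s(y_2)^{-1} = e$ show $u \in \ker(t)$ and $v \in \ker(s)$, and the identity is then exactly the commutativity assumption (b). I expect this reduction to be the main conceptual point of the proof; once one sees that the pieces landing in the two kernels come out naturally from the formula, the rest is essentially forced.

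Finally, $\circ$ is smooth as the restriction to the fibre product of the smooth map $(x,y) \mapsto x \, i(s(x))^{-1} y$ defined on all of $\Gamma_1 \times \Gamma_1$, and $\inv$ is smooth directly from \eqref{FormulaForInversion}. The only delicate point is the existence of $\Gamma_1 \times_{t,s} \Gamma_1$ as a Fr\'echet Lie subgroup of $\Gamma_1 \times \Gamma_1$; as noted before the lemma, this is automatic in the finite-dimensional case and, in the general Fr\'echet setting, can be ensured using the smooth section $i$ to show that $s$ and $t$ admit smooth local sections and hence behave as submersions for the purpose of forming the fibre product.
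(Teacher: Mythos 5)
Your algebraic verification is correct and follows essentially the same route as the paper: the groupoid axioms are routine consequences of (a), and the interchange law reduces, after cancellation, to the commutativity of an element of $\ker(t)$ with an element of $\ker(s)$, which is exactly assumption (b). Your identification of $u = i(s(x_2))^{-1}x_1 \in \ker(t)$ and $v = y_2\,i(s(y_2))^{-1} \in \ker(s)$ is the same reduction the paper performs (there written as $x\,i(s(x))^{-1} \in \ker(s)$ and $i(t(y))^{-1}y \in \ker(t)$), so this part is fine.

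The genuine gap is your treatment of the existence of the fibre product $\Gamma_1 \times_{t,s}\Gamma_1$, which you correctly flag as the delicate point but then dismiss with an argument that does not work. In the Fr\'echet category there is no implicit function theorem, so the chain ``$s$ and $t$ admit smooth sections, hence behave as submersions, hence the fibre product exists as a submanifold'' is not valid; this is precisely why the paper states, before the lemma, that in the infinite-dimensional setting the existence of the fibre product is an additional condition that must be established. The paper's proof spends most of its length on exactly this: it constructs the fibre product \emph{explicitly} as $U := \ker(s)\times\ker(s)\times\Gamma_0$ (a Fr\'echet manifold because assumption (b) includes that $\ker(s)$ is a Lie subgroup), equips $U$ with a group structure, defines the two projections $f(x,y,z)=x\,i(z)$ and $g(x,y,z)=y\,i(t(x)z)$, and verifies the universal property by exhibiting the unique mediating homomorphism $w \mapsto (w_1 i(s(w_1))^{-1}, w_2 i(s(w_2))^{-1}, s(w_1))$. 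The underlying idea --- that the global section $i$ trivializes $\Gamma_1$ as $\ker(s)\times\Gamma_0$ and thereby allows one to build the fibre product by hand --- is close to what you gesture at, but it must be carried out using hypothesis (b), not deduced from a submersion principle that is unavailable here. Without this construction the lemma, as a statement about Fr\'echet Lie 2-groups, is not proved.
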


\begin{proof}
First of all we prove that the fibre product $\Gamma_1 \times_{t,s} \Gamma_1$ exists in the category of Fr\'echet Lie groups.  
We consider $U := \mathrm{ker}(s) \times \mathrm{ker}(s) \times \Gamma_0$ equipped with the maps $f, g:U \to \Gamma_1$ defined by $f(x, y, z) :=x i(z) $ and $g(x, y, z) := yi(t(x)z)$. Then we have
\begin{equation*}
(s \circ g)(x, y, z) =s(yi(t(x)z))=t(x)z=t(xi(z))=(t \circ f)(x, y, z)\text{.}
\end{equation*}
By \eqref{cor:MinimalData2Group:b} we see that  $U$ is a Fr\'echet manifold, and  the maps $f$ and $g$ are clearly smooth. 
\begin{comment}
Thus, the idea is to use $U$ as a candidate for the fibre product  $\Gamma_1 \times_{t,s} \Gamma_1$, identified via the maps
\begin{equation*}
(f, g):U \to  \Gamma_1 \times_{t,s} \Gamma_1 : (x,y,z) \mapsto (xi(z), yi(t(x)z))
\end{equation*}  
Indeed, this map is a bijection with inverse
\begin{equation*}
(\gamma_1, \gamma_2) \mapsto (\gamma_1i(s(\gamma_1))^{-1}, \gamma_2i(s(\gamma_2))^{-1}, s(\gamma_1))\text{.}
\end{equation*}

We equip $U$ with a group structure such that it reflects the group structure on $\Gamma_1 \times_{t,s} \Gamma_1$, which is given component-wise. Thus, we put
\begin{align*}
(x_1, y_1, z_1) \cdot (x_2, y_2, z_2) \mapsto& (x_1i(z_1), y_1i(t(x_1)z_1))\cdot (x_2i(z_2), y_2i(t(x_2)z_2))
\\&= (x_1i(z_1)x_2i(z_2), y_1i(t(x_1)z_1)y_2i(t(x_2)z_2))
\\\mapsto& (x_1i(z_1)x_2i(z_2)i(s(x_1i(z_1)x_2i(z_2)))^{-1},
\\&\qquad y_1i(t(x_1)z_1)y_2i(t(x_2)z_2)i(s(y_1i(t(x_1)z_1)y_2i(t(x_2)z_2)))^{-1},
\\&\qquad s(x_1i(z_1)x_2i(z_2)) )
\\&= (x_1i(z_1)x_2i(z_1)^{-1},
  y_1x_1i(z_1)y_2i(z_1)^{-1}x_1^{-1}
, z_1z_2 )
\end{align*}
The last step uses the fact that 
\begin{equation*}
i(t(x))\cdot y\cdot i(t(x))^{-1} = xyx^{-1}
\end{equation*}
which follows from (b).
\end{comment}
Moreover, we turn $U$ into a Fr\'echet Lie group,  and $f$ and $g$ into group homomorphisms, by declaring
\begin{equation*}
(x_1, y_1, z_1) \cdot (x_2, y_2, z_2) := (x_1 i(z_1)x_2i(z_1)^{-1}, y_1x_1i(z_1)y_2i(z_1)^{-1}x_1^{-1}, z_1z_2)\text{.}
\end{equation*}
\begin{comment}
Indeed,  we have
\begin{align*}
g((x_1, y_1, z_1) \cdot (x_2, y_2, z_2) ) 
&= g(x_1 i(z_1)x_2i(z_1)^{-1}, y_1x_1i(z_1)y_2i(z_1)^{-1}x_1^{-1}, z_1z_2)
\\&=y_1x_1i(z_1)y_2i(z_1)^{-1}x_1^{-1}i(t(x_1 i(z_1)x_2i(z_1)^{-1})z_1z_2)
\\&=y_1x_1\underbrace{i(z_1)y_2i(z_1)^{-1}}_{\in \mathrm{ker}(s)}x_1^{-1}i(t(x_1 )z_1)i(t(x_2)z_2)
\\&= y_1i(t(x_1)z_1)\cdot y_2i(t(x_2)z_2)
\\&= g(x_1, y_1, z_1) \cdot g(x_2, y_2, z_2)
\\
f((x_1, y_1, z_1) \cdot (x_2, y_2, z_2) ) 
&= f(x_1 i(z_1)x_2i(z_1)^{-1}, y_1x_1i(z_1)y_2i(z_1)^{-1}x_1^{-1}, z_1z_2)
\\&= x_1 i(z_1)x_2i(z_1)^{-1} i(z_1z_2)
\\&= x_1 i(z_1)\cdot x_2 i(z_2) 
\\&= f(x_1, y_1, z_1) \cdot f(x_2, y_2, z_2)
\end{align*}
\end{comment}
Now we assume that
\begin{equation*}
\xymatrix{W \ar@/_1pc/[ddr]_{\tilde f} \ar@/^1pc/[rrd]^{\tilde g} \\ & U \ar[d]_{f} \ar[r]^{g} & \Gamma_1 \ar[d]^{t} \\ & \Gamma_1 \ar[r]_{s} & \Gamma_0 }
\end{equation*}
is a commutative diagram in the category of Fr\'echet Lie groups. 
We define
\begin{equation*}
h: W \to U;\quad h(w) := (w_1i(s(w_1))^{-1}, w_2i(s(w_2))^{-1}, s(w_1)) 
\end{equation*}
where $w_1 := \tilde f(w)$ and $w_2 := \tilde g(w)$. This is a smooth group homomorphism such that $g \circ h = \tilde g$ and $f \circ h = \tilde f$. 
\begin{comment}
Indeed,  note that
\begin{align*}
f(h(w)) &= f(w_1i(s(w_1))^{-1}, w_2i(s(w_2))^{-1}, s(w_1))
\\&= w_1i(s(w_1))^{-1} i(s(w_1)) 
\\&= w_1
\\&= \tilde f(w)\text{.} 
\end{align*}
and
\begin{align*}
g(h(w)) &= g(w_1i(s(w_1))^{-1}, w_2i(s(w_2))^{-1}, s(w_1)) 
\\&= w_2i(s(w_2))^{-1}i(t(w_1)) 
\\&= w_2
\\&= \tilde g(w)
\end{align*}
\end{comment}
It is straightforward to check that this map is unique with this property. This shows that $U$ is the required fibre product; in particular, it exists. It is then  easy to see that   the composition defined by \cref{FormulaForComposition} is smooth and that -- using \eqref{cor:MinimalData2Group:a} -- turns $\Gamma$ into a Fr\'echet Lie
groupoid.

        The commutativity in condition \eqref{cor:MinimalData2Group:b} is used in order to show that composition is a group homomorphism:
        Let  $x_1,  x_2,  y_1,  y_2 \in \Gamma_1$ with $s(x_1) = t(y_1)$, $s(x_2) = t(y_2)$.
        Observe that $x_2 \, i(s(x_2)) \in \ker(s)$ and $i(t(y_1))^{-1} y_1 \in \ker(t)$.
        Therefore, we can calculate
        \begin{align*}
                x_1 x_2 \circ y_1 y_2 &= x_1 x_2 \, i(t(y_1 y_2))^{-1} y_1 y_2 \\
                                      &= x_1 x_2 \, i(t( y_2))^{-1} i(t(y_1))^{-1}  y_1 y_2\\
                                      &= x_1 x_2 \, i(s( x_2))^{-1} i(t(y_1))^{-1}  y_1 y_2\\
                                      &= x_1  i(t(y_1))^{-1}  y_1 x_2 \, i(s( x_2))^{-1} y_2\\
                                      &= (x_1 \circ y_1) \cdot (x_2 \circ y_2),
        \end{align*}
        where in the second last step, we used (b). 
%
%
%        \nkw{Since $i$, $s$, and $t$ are smooth group homomorphisms, it is clear that \cref{FormulaForComposition}, \cref{FormulaForInversion} define smooth group homomorphisms.}
%        
\end{proof}

%\begin{definition}[Strict homomorphism] \label{DefinitionStrictHom}
%        Let $\Gamma$ and $\Omega$ be strict  2-groups in $\mathcal{C}$.
%        A \emph{strict homomorphism} $F: \Gamma \to \Omega$ 
% is a functor whose mappings
%        $F_0: \Gamma_0 \to \Omega_0$ and $F_1: \Gamma_1 \to \Omega_1$
% are group homomorphisms       in $\mathcal{C}$. 
%\end{definition}
%
%
%Lemma \ref{cor:MinimalData2Group} has the following immediate consequence:
%
%\begin{lemma}
%\label{lem:strict2grouphomomorphism}
%        Let $\Gamma$ and $\Omega$ be strict 2-groups in $\mathcal{C}$.
%        Suppose $F_0: \Gamma_0 \to \Omega_0$ and $F_1: \Gamma_1 \to \Omega_1$ are  group homomorphisms in $\mathcal{C}$ such that 
%        \begin{equation*}
%                F_0 \circ s_{\Gamma} = s_{\Omega} \circ F_1,
%                F_0 \circ t_{\Gamma} = t_{\Omega} \circ F_1,
%                F_1 \circ i_{\Gamma} = i_{\Omega} \circ F_1
%        \end{equation*}
%        hold. Then, $F_0$ and $F_1$ form a strict homomorphism $F: \Gamma \to \Omega$.
%\end{lemma}

        Another way to present (Fr\'echet) Lie 2-groups is in terms of   crossed modules of (Fr\'echet) Lie groups. 
Recall that a \emph{crossed module} $X$  of Fr\'echet Lie groups consists of a pair of Fr\'echet Lie groups $G$ and $H$ together with a smooth group homomorphism $t : H \to G$ and \emph{crossed module action} of $G$ on $H$, i.e., a smooth map $\alpha : G \times H \to H$, such that $\alpha$ is an action of $G$ on $H$ by group homomorphisms, and
\begin{equation} \label{CrossedModuleAxioms}
t(\alpha_g(h))=gt(h)g^{-1} \qquad \text{and} \qquad
\alpha_{t(h)}(k)=hkh^{-1}
\end{equation}
hold for all $g\in G$ and $h, k\in H$, where $\alpha_g(h):=\alpha(g, h)$.  
The first property means that $t$ is $G$-equivariant for the $G$-action $\alpha$ on $H$ and the conjugation action of $G$ on itself.
The second property is called the \emph{Peiffer identity}.

Observe that for a crossed module $X$, the Peiffer identity implies that  $A:=\ker(t)$ lies in the center of $H$ and, in particular, is abelian.
By $G$-equivariance of $t$, the $G$-action $\alpha$ restricts to an action on $A$.
\begin{comment}
By the Peiffer identity, this action is trivial when restricted to $t(H)$, hence descends to an action of $G/t(H)$.
We remark here that $t(H) \subseteq G$ is normal by the $G$-equivariance of $t$, but generally not a closed subgroup, so that $G/t(H)$ does not possess the structure of a Lie group.
We have $g t(h) g^{-1} = t(\alpha_g(h))$, for any $h \in H$ and $g \in G$.
\end{comment}
The crossed module is called \emph{central} if this action of $G$ on $A$ is trivial.

There is an adjoint equivalence
\begin{equation} 
\label{AdjunctionLieCross}
\xymatrix{
  \mathcal{X} : \Lietwogroups \ar@<3pt>[r] & \ar@<3pt>[l] \XMod : \mathcal{G}
}
\end{equation}
between the category $\Lietwogroups$ of Fr\'echet Lie 2-groups and the category $\XMod$ of  crossed modules of Fr\'echet Lie groups, when both are equipped with the obvious notion of strict morphisms. 
For plain crossed modules of sets, this is the Brown-Spencer theorem \cite{BrownSpencerCrossed}, which has been generalized to crossed modules ambient to another category  by Janelidze \cite{Janelidze2003}; here we use it in the Fr\'echet Lie group setting.
Explicitly, the equivalence \cref{AdjunctionLieCross} is given by:
\begin{equation*}
\mathcal{G}(H, G, t, \alpha) :=\left(
%\begin{minipage}{15em}
%\begin{minipage}{1em}
\quad
\begin{aligned}
   \Gamma_1 := &\; H \rtimes_\alpha G \\
   \Gamma_0 := &\; G
   \\\strut
\end{aligned}
%\end{minipage}
\quad
%\begin{minipage}{1em}
\begin{aligned}
   s(h, g) :=& g~~~~~ \\
    t(h, g) :=& t(h) g \\ 
    i(g) :=& (e, g)
\end{aligned}
\quad
%\end{minipage}
%\end{minipage}
\right)
\end{equation*}
\begin{equation*}
\mathcal{X}(\Gamma, s, t, i) :=
\left(\quad
%\begin{minipage}{13em}
\begin{aligned}
 H &:=  \ker(s)\subseteq \Gamma_1 \\ G &:= \Gamma_0\\
 t &:= (t: \Gamma_1 \to \Gamma_0)_{|\ker(s)}\\
 \alpha_g(h) &:= i(g) h\, i(g)^{-1}
\end{aligned}
%\end{minipage}
\quad
 \right)
\end{equation*}
The above description of $\mathcal{G}$ uses \cref{LemmaMinimalData2Group}, which applies here since the Lie subgroups $\mathrm{ker}(s)=\{(h, 1) \,|\, h\in H\}\cong H$ and $\mathrm{ker}(t)=\{(h^{-1},t(h))\,|\,h\in H\}\cong H$ commute. 
\begin{comment}
We have $\mathrm{ker}(s)=H$ embedded via $h\mapsto (h, 1)$ and $\mathrm{ker}(t)=H$ embedded via $h\mapsto (h^{-1}, t(h))$. The second is a group homomorphism:
\begin{align*}
(h_1^{-1}, t(h_1))\cdot (h_2^{-1}, t(h_2)) &= (h_1^{-1}\alpha(t(h_1), h_2^{-1}), t(h_1)t(h_2))
\\&=  (h_1^{-1}h_1h_2^{-1}h_1^{-1}, t(h_1)t(h_2))
\\&=  (h_2^{-1}h_1^{-1}, t(h_1h_2))
\\&=  ((h_1h_2)^{-1}, t(h_1h_2))\text{.}
\end{align*}
These two subgroups commute:
\begin{align*}
(h_1, 1)\cdot (h_2^{-1}, t(h_2)) &= (h_1h_2^{-1}, t(h_2))
\\& = (h_2^{-1}h_2h_1h_2^{-1}, t(h_2))
\\& = (h_2^{-1}\alpha(t(h_2),h_1), t(h_2))
\\&= (h_2^{-1}, t(h_2))\cdot (h_1, 1)\text{.} 
\end{align*}
\end{comment}
It is worthwhile to look at the unit and counit maps
\begin{equation*}
\epsilon : \mathcal{X}\mathcal{G} \Rightarrow \id_{\XMod} \qquad \eta : \id_{\Lietwogroups} \Rightarrow \mathcal{G} \mathcal{X}
\end{equation*}
of the adjunction \cref{AdjunctionLieCross}.
While the formula for the unit $\epsilon$ is obvious, the counit $\eta$ is given at a Lie 2-group $\Gamma$ by the strict Lie 2-group isomorphism
\begin{equation*}
  \eta_\Gamma = \left( \begin{aligned} \Gamma_1 &\rightarrow \ker(s) \rtimes_\alpha \Gamma_0, & h &\mapsto (h \, i(s(h))^{-1}, s(h)) \\ \Gamma_0 &\to \Gamma_0, &  g &\mapsto g\end{aligned}\right).
\end{equation*}

\begin{example}
\label{ex:2groupsAbelian}
Given any abelian  Lie group $A$, setting $\Gamma_0 = \{e\}$, $\Gamma_1 = A$ (and trivial $s, t, i$) give a strict  Lie 2-group denoted by $BA$.
The corresponding crossed module is $A \to \{e\}$, with the (necessarily trivial) action.
Observe that $A$ is forced to be abelian by the requirement of \cref{LemmaMinimalData2Group} \eqref{cor:MinimalData2Group:b}.
\end{example}

\begin{example}
\label{ex:2-groupsOrdinaryGroups}
Any  Lie group $G$ can be viewed as a strict  Lie 2-group, denoted $G\dis$, by setting $\Gamma_0 = \Gamma_1 = G$ and $s=t=i=\id_G$.
The corresponding crossed module is $\{e\} \to G$.
%
\begin{comment}
For any short exact sequence
\begin{equation*}
  K \to G^\prime \to G
\end{equation*}
of Lie groups, witnessing $G \cong G^\prime/K$, we obtain another crossed module $K \to G^\prime$, with the conjugation action of $G^\prime$ on the normal subgroup $K$.
There is moreover an evident strict homomorphism from $K \to G^\prime$ to $G$.
This morphism is not strictly invertible, but it is a weak equivalence in the larger 2-category of crossed modules.
Here one can use the notion of  a \emph{butterfly}, which implement the correct notion of \emph{weak  isomorphisms} between crossed modules, see \cite{Aldrovandi2009};
The butterfly is
\begin{equation*}
\xymatrix{
 K \ar[dr] \ar[dd] & & \{e\} \ar[dl] \ar[dd]\\
 & G^\prime \ar[dl] \ar[dr]& \\
 G^\prime   & & G
}
\end{equation*}
\end{comment}
%
\end{example}

\subsection{Crossed modules from  loop group extensions}

\label{SectionCrossedModulesFromCExt}

Let again $G$ be a connected (finite-dimensional) Lie group and let 
\begin{equation*}
1 \to \U(1) \to \widetilde{\Omega G} \stackrel\pi\to \Omega G \to 1
\end{equation*}
be a Fr\'echet central extension of the based loop group $\Omega G$.
We will now describe how to use this central extension to produce crossed module of Fr\'echet Lie groups.
For a Lie subgroup $H \subset \Omega G$, we write 
\begin{equation*}
\widetilde{H} := H \times_{\Omega G} \widetilde{\Omega G}
\end{equation*}
for the pullback of $\widetilde{\Omega G}$ to $H$, and address an element $(h,\Phi)\in \widetilde H$ by just $\Phi$.

We identify $P_eG^{[2]}$ with a subgroup of $\Omega G$ using the injective map $\cup: P_eG^{[2]} \to \Omega G$, and hence consider, in the above notation, the pullback 
\begin{equation*}
\widetilde{P_eG^{[2]}} = P_eG^{[2]} \times_{\Omega G} \widetilde{\Omega G}\text{.}    
\end{equation*}
 To begin with, we have canonical maps
\begin{equation}
\label{SourceTargetMaps}
        \xymatrix{ 
        \widetilde{P_e G^{[2]}} \ar@<3pt>[r]^{s} \ar@<-3pt>[r]_{t} & P_e G
        }
        , \qquad 
        \begin{aligned} s(\Phi) &= \gamma_2, \\ t(\Phi) &= \gamma_1, \end{aligned}
        \qquad \text{whenever} \quad \pi(\Phi) = \gamma_1 \cup \gamma_2.
\end{equation}
We note  that 
\begin{equation}
\label{kernel-of-s}
\ker(s) = \widetilde{\Omega_{(0, \pi)}G}.
\end{equation}

As for any central extension, the conjugation action of $\widetilde{\Omega G}$ on itself descends to a smooth action of $\Omega G$.
This action is trivial on $\U(1) \subset \widetilde{\Omega G}$ and restricts to the subgroups $\widetilde{\Omega_I G}$, for any subset $I \subseteq S^1$.
Pulling back along the \quot{diagonal} group homomorphism 
\begin{equation*}
P_e G \to  P_eG^{[2]} \stackrel\cup\to \Omega G\text{,}
\end{equation*}
where $\cup$ is defined in \eqref{CupMap}, we obtain an action of $P_e G$ on $\widetilde{\Omega G}$. 
The restriction of this action to $\widetilde{\Omega_{(0, \pi)} G}$ will be denoted by $\alpha$, and will be called the \emph{canonical action} associated to $\widetilde{\Omega G}$.
Explicitly, it is given by
\begin{equation} \label{CanonicalAction}
  \alpha: P_eG \times \widetilde{\Omega_{(0, \pi)} G} \longrightarrow \widetilde{\Omega_{(0, \pi)} G}, \qquad \alpha_\gamma(\Phi)  = \widetilde{\gamma\cup\gamma} \cdot \Phi \cdot (\widetilde{\gamma\cup\gamma})^{-1},
\end{equation}
where $\widetilde{\gamma\cup\gamma}$ is any lift of $\gamma \cup \gamma$ to $\widetilde{\Omega G}$.
As the choice of lift is unique up to an element in the center of $\widetilde{\Omega G}$, the right hand side of \cref{CanonicalAction} is independent of the choice of lift.

\begin{remark}
We emphasize that the construction of the canonical action $\alpha$ is \emph{much} simpler than the construction in \cite[Lemma~24]{BCSS} and, in particular, that it does not depend on any additional data or a particular model for the central extension (compare also \cite[Prop.~4.3.2]{PressleySegal}).
That the canonical action  coincides with the action from  \cite[Lemma~24]{BCSS} will be discussed in detail in \cref{Comparison-with-BCSS}.
\end{remark}

The map $t$ intertwines the canonical action $\alpha$ with the conjugation action of $P_e G$ on itself,
\begin{equation*}
t (\alpha_\gamma(\Phi)) = \gamma \cdot t(\Phi) \cdot \gamma^{-1}.
\end{equation*}
However, the canonical action $\alpha$ does \emph{not} generally satisfy the Peiffer identity 
\begin{equation} \label{PeifferIdentity}
 \alpha_{t(\Psi)}(\Phi) = \Psi \cdot \Phi \cdot \Psi^{-1}.
\end{equation}
Instead, we have the following lemma.

\begin{lemma}
  If the central extension $\widetilde{\Omega G}$ is disjoint commutative, then the canonical action $\alpha$ of \cref{CanonicalAction} satisfies the Peiffer identity.
\end{lemma}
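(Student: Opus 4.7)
The plan is to verify the Peiffer identity $\alpha_{t(\Psi)}(\Phi) = \Psi \, \Phi \, \Psi^{-1}$ for arbitrary $\Phi, \Psi \in \widetilde{\Omega_{(0,\pi)} G}$. Setting $\phi := \pi(\Phi)$ and $\psi := \pi(\Psi)$, both are loops supported in $(0, \pi)$. Under the identification of $\Omega_{(0,\pi)} G$ with a subgroup of $P_e G^{[2]}$ via $\cup$, one reads off $t(\Psi) = \psi|_{[0, \pi]} \in P_e G$, and \cref{CanonicalAction} gives
\begin{equation*}
\alpha_{t(\Psi)}(\Phi) \;=\; \widetilde{\hat\psi} \cdot \Phi \cdot \widetilde{\hat\psi}^{-1},
\end{equation*}
where $\hat\psi := \psi|_{[0, \pi]} \cup \psi|_{[0, \pi]}$ is the loop that equals $\psi(t)$ on $[0, \pi]$ and $\psi(2\pi - t)$ on $[\pi, 2\pi]$, and $\widetilde{\hat\psi}$ is any lift to $\widetilde{\Omega G}$ (the choice is irrelevant because $\U(1) \subset \widetilde{\Omega G}$ is central).

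The next step is to reformulate the desired identity $\widetilde{\hat\psi} \, \Phi \, \widetilde{\hat\psi}^{-1} = \Psi \, \Phi \, \Psi^{-1}$ as the assertion that the element $\Psi^{-1} \widetilde{\hat\psi} \in \widetilde{\Omega G}$ commutes with $\Phi$, and to compute the projection $\pi(\Psi^{-1} \widetilde{\hat\psi}) = \psi^{-1} \hat\psi$ pointwise on $S^1$. For $t \in [0, \pi]$, one has $\psi(t)^{-1} \hat\psi(t) = \psi(t)^{-1} \psi(t) = e$; for $t \in [\pi, 2\pi]$, the fact that $\psi$ is supported in $(0, \pi)$ gives $\psi(t) = e$, so the product equals $\hat\psi(t) = \psi(2\pi - t)$. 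Thus $\psi^{-1} \hat\psi$ vanishes on $[0, \pi]$ and agrees with the ``reverse'' of $\psi$ on $[\pi, 2\pi]$; in particular, it lies in $\Omega_{(\pi, 2\pi)} G$.

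Finally, the supports $(0, \pi)$ of $\phi$ and $(\pi, 2\pi)$ of $\psi^{-1} \hat\psi$ are disjoint subsets of $S^1$, so disjoint commutativity of $\widetilde{\Omega G}$ directly implies that $\Phi$ and any lift of $\psi^{-1} \hat\psi$ commute in $\widetilde{\Omega G}$. Choosing $\Psi^{-1} \widetilde{\hat\psi}$ as such a lift and rearranging gives the Peiffer identity.

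The argument amounts to bookkeeping of supports, and I do not expect a substantial obstacle. The only points requiring a modicum of care are (i) that flatness of $\psi$ at $0$ and $\pi$ is what ensures $\psi^{-1} \hat\psi$ is genuinely an element of $\Omega_{(\pi, 2\pi)} G$ (and not merely a continuous loop supported there), and (ii) that the statement of disjoint commutativity should be applied to the central extension $\widetilde{\Omega G}$ in the evident sense; both are clear from the definitions.
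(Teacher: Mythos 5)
Your proof is correct and follows essentially the same route as the paper's: both arguments hinge on the observation that $\Psi^{-1}\widetilde{\gamma\cup\gamma}$ (equivalently, $\widetilde{\gamma\cup\gamma}\,\Psi^{-1}$) lies over a loop supported in $(\pi,2\pi)$ and therefore, by disjoint commutativity, commutes with $\Phi\in\widetilde{\Omega_{(0,\pi)}G}$. The paper merely phrases the same computation by inserting $\Psi^{-1}\Psi$ and identifying the outer factors as elements of $\ker(t)=\widetilde{\Omega_{(\pi,2\pi)}G}$ and the middle factor $\Psi\Phi\Psi^{-1}$ as an element of $\ker(s)=\widetilde{\Omega_{(0,\pi)}G}$, whereas you verify the support claim pointwise.
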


\begin{proof}
Let $\Psi, \Phi \in \ker(s) = \widetilde{\Omega_{(0, \pi)} G}$ and write $\gamma = t(\Psi)$. 
Then
\begin{equation*}
\begin{aligned}
  \alpha_{t(\Psi)}(\Phi) &= \widetilde{\gamma \cup \gamma} \cdot \Phi \cdot \widetilde{\gamma \cup \gamma}^{-1}\\
  &= (\widetilde{\gamma \cup \gamma} \cdot \Psi^{-1}) \cdot (\Psi \Phi \Psi^{-1}) \cdot (\Psi \cdot \widetilde{\gamma \cup \gamma}^{-1}).
\end{aligned}
\end{equation*}
The middle term is contained in $\ker(s) = \widetilde{\Omega_{(0, \pi)} G}$, while the outer terms are contained in $\ker(t) = \widetilde{\Omega_{(\pi, 2\pi)} G}$.
Hence, by disjoint commutativity, these terms commute, leading to the desired result.
\end{proof}

Finally, we observe that the canonical action $\alpha$ is trivial on the central subgroup $\U(1)\subset \widetilde{\Omega_{(0, \pi)} G}$.
Thus, we obtain the following result.
\begin{theorem}
\label{CanonicalCrossedModule} 
If $\widetilde{\Omega G}$ is a disjoint commutative central extension of $\Omega G$, then the Lie group homomorphism $t: \widetilde{\Omega_{(0, \pi)} G} \to P_e G$ and the canonical action $\alpha$ of \cref{CanonicalAction} form a central crossed module of Fr\'echet Lie groups, denoted by $X(\widetilde{\Omega G})$. %\begin{equation} 
%X(\widetilde{\Omega G}) := \left\{
%\begin{tikzcd} 
%\widetilde{\Omega_{(0, \pi)} G} \ar[r, "t"] & P_e G.  \ar[l, bend right=30, "\alpha"']
% \end{tikzcd}
% \right\}
%\end{equation}
\end{theorem}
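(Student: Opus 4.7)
My plan is to assemble the result by verifying the crossed module axioms one at a time, since almost every ingredient has been established in the preceding discussion. First I would confirm that the canonical action $\alpha$ in \cref{CanonicalAction} is a well-defined smooth action of $P_eG$ on $\widetilde{\Omega_{(0,\pi)}G}$ by group automorphisms. Independence of the chosen lift $\widetilde{\gamma\cup\gamma}$ follows because two such lifts differ by an element of $\U(1)\subset Z(\widetilde{\Omega G})$, which cancels in the conjugation. Smoothness is inherited from the existence of smooth local sections of $\pi$. That $\alpha_\gamma$ preserves the subgroup $\widetilde{\Omega_{(0,\pi)}G}$ is immediate: its image in $\Omega G$ is the conjugate of a loop in $\Omega_{(0,\pi)}G$ by $\gamma\cup\gamma$, which preserves the support condition.

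Next I would verify the two identities in \cref{CrossedModuleAxioms}. The equivariance identity $t(\alpha_\gamma(\Phi))=\gamma\, t(\Phi)\,\gamma^{-1}$ has already been recorded in the text and is obtained by applying $\pi$ to the definition of $\alpha_\gamma(\Phi)$ and restricting to the first half $[0,\pi]$, using that the projection of $\widetilde{\gamma\cup\gamma}$ acts on $\pi(\Phi)=t(\Phi)\cup\const_e$ by conjugating the first half with $\gamma$. The Peiffer identity is exactly the content of the lemma immediately preceding the theorem, and this is the only step where disjoint commutativity enters: writing $\alpha_{t(\Psi)}(\Phi) = (\widetilde{\gamma\cup\gamma}\cdot\Psi^{-1})(\Psi\Phi\Psi^{-1})(\Psi\cdot\widetilde{\gamma\cup\gamma}^{-1})$, the outer factors lie in $\ker(t)=\widetilde{\Omega_{(\pi,2\pi)}G}$ by \cref{kernel-of-s} applied to $t$, while the middle factor lies in $\ker(s)=\widetilde{\Omega_{(0,\pi)}G}$, and disjoint commutativity forces these to commute.

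Finally, for centrality I would identify $\ker(t)\subset\widetilde{\Omega_{(0,\pi)}G}$. An element $\Phi$ lies in $\ker(t)$ precisely when the first half of $\pi(\Phi)\in\Omega_{(0,\pi)}G$ is the constant path at $e$; but an $\Omega_{(0,\pi)}G$-loop is already trivial on $[\pi,2\pi]$, so $\pi(\Phi)=\const_e$ and hence $\Phi\in\U(1)$. Since $\U(1)$ is central in $\widetilde{\Omega G}$, every conjugation fixes it pointwise, so $\alpha$ is trivial on $\ker(t)=\U(1)$, which is the definition of centrality for the crossed module.

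I do not anticipate any real obstacle: the one nontrivial axiom, the Peiffer identity, has been reduced to disjoint commutativity in the preceding lemma, and the remaining work amounts to bookkeeping of the various identifications (in particular that $\ker(t)$ really collapses to $\U(1)$ and that $\alpha$ is well defined and smooth as a map of Fr\'echet Lie groups).
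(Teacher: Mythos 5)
Your proposal is correct and follows essentially the same route as the paper, which likewise assembles the theorem from the well-definedness and smoothness of $\alpha$, the stated equivariance of $t$, the preceding lemma reducing the Peiffer identity to disjoint commutativity via the decomposition into $\ker(s)$- and $\ker(t)$-factors, and the observation that $\ker(t)=\U(1)$ is fixed by $\alpha$. Your slightly more explicit verification that $\ker(t)$ collapses to $\U(1)$ is a welcome piece of bookkeeping that the paper leaves implicit.
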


Next we study the question if there are other options for the crossed module action $\alpha$.

\begin{theorem}
\label{ThmUniqueAction}
Let $G$ be a semisimple Lie group and let $\widetilde{\Omega G}$ is a disjoint commutative central extension of $\Omega G$.
 Let moreover $\alpha^\prime$ be an  action of $P_e G$ on $\widetilde{\Omega_{(0, \pi)} G}$ turning 
\begin{equation*} 
t: \widetilde{\Omega_{(0, \pi)} G} \to P_e G
\end{equation*}
into a central crossed module. 
Then $\alpha^\prime$ coincides with the canonical action $\alpha$ of \cref{CanonicalAction}.
\end{theorem}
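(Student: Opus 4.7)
The plan is to measure the difference between $\alpha'$ and the canonical action $\alpha$ by the map
\begin{equation*}
 \beta : P_e G \times \widetilde{\Omega_{(0,\pi)} G} \to \widetilde{\Omega_{(0,\pi)} G}, \qquad \beta(\gamma, \Phi) := \alpha'_\gamma(\Phi) \alpha_\gamma(\Phi)^{-1},
\end{equation*}
and to prove $\beta \equiv 1$. The first move is to check that $\beta$ is actually $\U(1)$-valued: both actions make $t$ equivariant with respect to inner conjugation in $P_e G$, forcing $t(\beta(\gamma, \Phi)) = e$, and a short inspection shows $\ker(t) \cap \widetilde{\Omega_{(0,\pi)} G} = \U(1)$, since a loop supported in $(0, \pi)$ whose restriction to $[0, \pi]$ is constantly $e$ must itself be constant.

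For each fixed $\gamma$, the slice $\beta(\gamma, \cdot) : \widetilde{\Omega_{(0,\pi)} G} \to \U(1)$ is a smooth group homomorphism (because $\alpha_\gamma$ and $\alpha'_\gamma$ both are), and by centrality of both crossed modules it is trivial on the central $\U(1)$, hence descends to a Lie group homomorphism $\Omega_{(0,\pi)} G \to \U(1)$. \Cref{TheoremSemisimpleNoHoms} kills this homomorphism on the identity component, so $\beta(\gamma, \Phi)$ depends only on the class $[\pi(\Phi)] \in \pi_0(\Omega_{(0,\pi)} G) = \pi_1(G)$; choosing a smooth representative for each class produces a smooth map $\tilde\beta : P_e G \times \pi_1(G) \to \U(1)$.

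It remains to show that $\tilde\beta(\cdot, k) : P_e G \to \U(1)$ is itself a group homomorphism, so that a second application of \cref{TheoremSemisimpleNoHoms} finishes the job. Expanding $\alpha'_{\gamma_1 \gamma_2} = \alpha'_{\gamma_1} \circ \alpha'_{\gamma_2}$ via the definition of $\beta$, and commuting $\U(1)$-valued factors past $\alpha'_{\gamma_1}$ using centrality, yields the cocycle identity
\begin{equation*}
\beta(\gamma_1 \gamma_2, \Phi) = \beta(\gamma_2, \Phi) \, \beta(\gamma_1, \alpha_{\gamma_2}(\Phi)).
\end{equation*}
Because $P_e G$ is path-connected (in fact contractible, via the diffeomorphism $P_e G \cong P_0 \mathfrak{g}$ obtained from the logarithmic derivative), the family $\gamma_2 \mapsto \pi(\alpha_{\gamma_2}(\Phi))$ is a continuous path in $\Omega_{(0,\pi)} G$ passing through $\pi(\Phi)$, so $[\pi(\alpha_{\gamma_2}(\Phi))] = [\pi(\Phi)]$. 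Substituting into the cocycle collapses it to $\tilde\beta(\gamma_1 \gamma_2, k) = \tilde\beta(\gamma_1, k) \tilde\beta(\gamma_2, k)$, as desired.

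The main obstacle I anticipate is the cocycle computation in the final step: centrality of $\alpha'$ must be used at precisely the right moment, and one needs the mild topological input that conjugation by elements of the connected group $P_e G$ fixes $\pi_0$-classes in $\Omega_{(0,\pi)} G$. Everything else—the two identifications via \cref{TheoremSemisimpleNoHoms} and the bookkeeping of how $\beta$ varies with $\Phi$ and $\gamma$—is routine once these points are in place.
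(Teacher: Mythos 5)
Your proof is correct, and its core coincides with the paper's: both measure the discrepancy $\alpha'_\gamma(\Phi)\,\alpha_\gamma(\Phi)^{-1}$, observe that the equivariance of $t$ under both actions forces it into $\U(1)$, use centrality to make it descend to a Lie group homomorphism $\Omega_{(0,\pi)}G \to \U(1)$ for each fixed $\gamma$, and then invoke \cref{TheoremSemisimpleNoHoms}. The difference lies in the final step, and it is in your favour. The paper concludes directly that this homomorphism is trivial, but \cref{TheoremSemisimpleNoHoms} only kills homomorphisms on the \emph{identity component} of $\Omega_{(0,\pi)}G$; since $\pi_0(\Omega_{(0,\pi)}G)\cong\pi_1(G)$ can be nontrivial for a semisimple $G$ that is not simply connected, the induced homomorphism $\pi_1(G)\to\U(1)$ is left unaccounted for in the paper's argument as written. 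Your additional step---the cocycle identity $\beta(\gamma_1\gamma_2,\Phi)=\beta(\gamma_2,\Phi)\,\beta(\gamma_1,\alpha_{\gamma_2}(\Phi))$ obtained from centrality of $\alpha'$, the observation that conjugation by the connected group $P_eG$ fixes $\pi_0$-classes in $\Omega_{(0,\pi)}G$, and a second application of \cref{TheoremSemisimpleNoHoms} to the connected group $P_eG$ (where the theorem applies in full)---closes exactly this gap. All the individual verifications you flag as delicate do go through: $\beta$ is $\U(1)$-valued because $\ker(t)$ in $\widetilde{\Omega_{(0,\pi)}G}$ is precisely $\U(1)$, it is a homomorphism in $\Phi$ because $\U(1)$ is central, and the cocycle computation uses only that $\alpha'$ acts by group homomorphisms and trivially on $\U(1)$. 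So your argument is not merely an alternative route but a strictly more complete version of the paper's proof.
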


\begin{proof}
For $\gamma \in P_e G$ and $\eta \in \Omega_{(0, \pi)} G$, we define a map $\kappa_{\gamma}: \Omega_{(0, \pi)} G \to \widetilde{\Omega_{(0, \pi)} G}$ by 
  \begin{equation} \label{OmegaDefinition}
   \kappa_\gamma(\eta) := \alpha^\prime_{\gamma}(\tilde{\eta})\alpha_{\gamma}(\tilde{\eta})^{-1},
\end{equation}
where $\tilde{\eta}$ is any lift of $\eta$.
This is well-defined, as any two lifts of $\eta$ differ only by an element $z \in \U(1)$ and both actions are central, so $\alpha^\prime_\gamma(z) = z = \alpha_\gamma(z)$.
It is moreover smooth as $\widetilde{\Omega G}$ possesses smooth local sections.
As both $\alpha^\prime$ and $\alpha$ intertwine $t$ with the conjugation action of $P_eG$ on $\Omega_{(0,\pi)}G$, we have $t(\kappa_{\gamma}(\eta)) = \const_e$ for all $\gamma \in P_e G$, $\eta \in \Omega_{(0, \pi)} G$, hence $\kappa_\gamma$ takes values in $\U(1)$. Moreover,
$\kappa_{\gamma}$ is a group homomorphism: 
\begin{equation*}
\begin{aligned}
  \kappa_{\gamma}(t(\Phi)t(\Psi)) &= \alpha^\prime_\gamma(\Phi \Psi)\alpha_\gamma(\Phi \Psi)^{-1}\\
  &= \alpha^\prime_\gamma(\Phi)\underbrace{\alpha^\prime_\gamma( \Psi)\alpha_\gamma(\Psi )^{-1}}_{\in \U(1)}\alpha_\gamma(\Phi)^{-1}\\
  &= \alpha^\prime_\gamma(\Phi)\alpha_\gamma(\Phi)^{-1} \alpha^\prime_\gamma( \Psi)\alpha_\gamma(\Psi)^{-1}\\
  &= \kappa_{\gamma}(t(\Phi))\kappa_{\gamma}(t(\Psi)).
\end{aligned}
\end{equation*}
By \cref{TheoremSemisimpleNoHoms}, $\kappa_\gamma: \Omega_{(0,\pi)}G \to \U(1)$ must be the trivial group homomorphism for each $\gamma \in P_e G$.
Hence  $\alpha^\prime$ coincides with $\alpha$.
\end{proof}

Let $\XExt G$  be the subcategory of $\XMod$ consisting of those central crossed modules $(\widetilde{\Omega_{(0, \pi)} G}, P_e G, t, \alpha)$ in which $\widetilde{\Omega_{(0, \pi)} G}$ is a disjoint commutative central extension of $\Omega_{(0, \pi)} G$, and $t:\widetilde{\Omega_{(0, \pi)} G} \to P_eG$ is given as before; i.e., if $\Phi \in \widetilde{\Omega_{(0, \pi)} G}$ projects to $\gamma \cup \const_e$, then $t(\Phi)=\gamma$.
The morphisms are crossed module morphisms whose  map $P_eG \to P_eG$ is the identity, and whose map $\widetilde{\Omega_{(0, \pi)} G} \to \widetilde{\Omega_{(0, \pi)} G}'$ is a morphism of central extensions of $\Omega_{(0, \pi)} G$.  On the other side, we let $\DCCExt(\Omega G)$  denote the full subcategory of $\CExt(\Omega G)$ over all disjoint commutative central extensions of $\Omega G$.
\cref{CanonicalCrossedModule} establishes a functor
\begin{equation} \label{CrossedModuleAssignment}
 X : \DCCExt(\Omega G) \longrightarrow \XExt G.
\end{equation}
In order to see this, it suffices to observe  that any automorphism of a central extension $\widetilde{\Omega G}$ provides an automorphism of the restricted central extension $\widetilde{\Omega_{(0, \pi)} G}$ that intertwines the action $\alpha$.
%
\begin{comment}
 Because automorphisms have the form $\Phi \mapsto f(\pi(\Phi)) \Phi$ for a function $\Omega G \to \U(1)$, and $\alpha$ acts trivially on $\U(1)$.
\end{comment}

\begin{corollary} 
If $G$ is simply connected and semisimple, the functor $X$ is an equivalence of categories,  $\DCCExt(\Omega G) \cong \XExt G$.
\end{corollary}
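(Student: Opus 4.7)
The strategy is to show that $X$ is fully faithful and essentially surjective, relying on three results already at our disposal: \cref{LemmaRestrictionIso}, which shows that restriction $R: \CExt(\Omega G) \to \CExt(\Omega_{(0,\pi)}G)$ is an equivalence of categories when $G$ is semisimple; \cref{CorollarySemisimpleSimplyConnectedDisjointCommutative} together with \cref{LemmaEquivalenceLGOmegaG}, which transfer to give that under our hypotheses every central extension of $\Omega G$ is automatically disjoint commutative, so that $\DCCExt(\Omega G) = \CExt(\Omega G)$ and $X$ is in fact defined on all of $\CExt(\Omega G)$; and \cref{ThmUniqueAction}, which gives uniqueness of the crossed-module action compatible with a given disjoint commutative central extension.

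For full faithfulness, I would first observe that $X$ factors as the composition $\CExt(\Omega G) \xrightarrow{R} \CExt(\Omega_{(0,\pi)}G) \to \XExt G$, where the second functor equips a central extension with the target map $t$ of \cref{SourceTargetMaps} and the canonical action $\alpha$ of \cref{CanonicalAction}. Since $R$ is an equivalence, it suffices to check that the second functor is fully faithful. Any morphism in $\XExt G$ is by definition a morphism of central extensions of $\Omega_{(0,\pi)}G$; conversely, given such a morphism $f: \widetilde{\Omega_{(0,\pi)}G} \to \widetilde{\Omega_{(0,\pi)}G}'$, one must verify that $f$ automatically commutes with $t$ and intertwines the canonical actions. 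The former is immediate from the definition of $t$. For the latter, lift $f$ via $R$ to a morphism $F: \widetilde{\Omega G} \to \widetilde{\Omega G}'$ of central extensions of $\Omega G$; since $F$ is a group homomorphism trivial on $\U(1)$, it sends any lift of $\gamma \cup \gamma$ to a lift of $\gamma \cup \gamma$ in the target, so $F$ intertwines the conjugation that defines $\alpha$. Restricting back to $\Omega_{(0,\pi)}G$ yields the claim for $f$.

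For essential surjectivity, let $(\widetilde{\Omega_{(0,\pi)}G}, P_eG, t, \alpha')$ be an object of $\XExt G$. By \cref{LemmaRestrictionIso}, there is a central extension $\widetilde{\Omega G}$ of $\Omega G$ whose restriction is isomorphic to $\widetilde{\Omega_{(0,\pi)}G}$, and since $G$ is simply connected and semisimple this $\widetilde{\Omega G}$ is automatically disjoint commutative. Then $X(\widetilde{\Omega G})$ is a central crossed module with underlying central extension $\widetilde{\Omega_{(0,\pi)}G}$ and canonical action $\alpha$. By \cref{ThmUniqueAction}, the given action $\alpha'$ must coincide with $\alpha$, so $X(\widetilde{\Omega G})$ is isomorphic to the original object. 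Combined with full faithfulness, this proves the equivalence.

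The step I expect to require most care is the intertwining verification in the full-faithfulness argument, since the canonical action is defined extrinsically via lifts in $\widetilde{\Omega G}$ rather than intrinsically on $\widetilde{\Omega_{(0,\pi)}G}$. However, once $f$ has been lifted to $F$ via the restriction equivalence, the claim reduces to the formal observation that group-extension morphisms preserve conjugation; no genuine computation is needed. Alternatively, one can bypass this verification entirely by invoking \cref{ThmUniqueAction} on both sides and noting that there is a unique compatible action, so any morphism of underlying extensions is automatically a morphism of crossed modules.
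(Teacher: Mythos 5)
Your proof is correct, but it is organized differently from the paper's. The paper first invokes \cref{LemmaAutoCExt} to observe that, under the hypotheses, both $\DCCExt(\Omega G)$ and $\XExt G$ are groupoids with \emph{trivial} automorphism groups, which collapses the whole problem to showing that $X$ is a bijection on isomorphism classes; injectivity and surjectivity on classes then both follow from \cref{LemmaRestrictionIso}. You instead run the standard fully-faithful-plus-essentially-surjective check directly: you factor $X$ through the restriction equivalence and verify by hand that a morphism of central extensions automatically intertwines the canonical actions (because a bundle map trivial on $\U(1)$ sends lifts of $\gamma\cup\gamma$ to lifts of $\gamma\cup\gamma$, and the conjugation defining $\alpha$ in \cref{CanonicalAction} is independent of the lift). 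This is more work than the paper's shortcut, but it is more robust -- it does not lean on the vanishing of automorphism groups, and your closing remark that \cref{ThmUniqueAction} makes any morphism of underlying extensions automatically a crossed-module morphism (by transporting the action along the isomorphism and invoking uniqueness) is a clean alternative. On essential surjectivity you are actually \emph{more} complete than the paper: the paper's argument only produces a preimage of the underlying central extension via \cref{LemmaRestrictionIso} and leaves implicit that the given action $\alpha'$ must agree with the canonical one, whereas you explicitly close this gap with \cref{ThmUniqueAction}. Your justification that the extension of $\widetilde{\Omega_{(0,\pi)}G}$ to $\Omega G$ is disjoint commutative (via \cref{CorollarySemisimpleSimplyConnectedDisjointCommutative}) also differs from the paper's, which extracts this from the proof of \cref{LemmaRestrictionIso}; both are valid.
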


\begin{proof}
%By Lemma~\ref{LemmaCExtExtends}, any central extension of $\Omega_{(0, \pi)} G$ extends to a central extension of $\widetilde{\Omega G}$.
%Hence the functor \cref{CrossedModuleAssignment} is essentially surjective.
%We show that it is fully faithful.
%By definition, the functor \cref{CrossedModuleAssignment} factors as
%\begin{equation*}
%   \CExt_{\mathrm{d.c.}}(\Omega G) \longrightarrow \CExt_{\mathrm{d.c.}}(\Omega_{(0, \pi)} G) \longrightarrow 
%\end{equation*}
By \cref{LemmaAutoCExt}, the assumptions on $G$ imply that both $\DCCExt(\Omega G)$ and $\XExt G$ are groupoids with trivial automorphism groups. 
Therefore, we only have to show that the functor $X$ is a bijection on isomorphism classes of objects. 

If two crossed modules $X(\widetilde{\Omega G})$ and $X(\widetilde{\Omega G}^\prime)$ are isomorphic via an isomorphism in $\XExt G$, then this in particular implies that the restricted central extensions $\widetilde{\Omega_{(0, \pi)} G}$ and $\widetilde{\Omega_{(0, \pi)} G}^\prime$ are isomorphic.
But, by  \cref{LemmaRestrictionIso}, this implies that $\widetilde{\Omega G}$ and $\widetilde{\Omega G}$ are themselves isomorphic.
Hence the functor $X$ is injective.

Conversely, by the same \cref{LemmaRestrictionIso}, any central extension $\widetilde{\Omega_{(0, \pi)} G}$ of $\Omega_{(0, \pi)} G$ is the restriction of a central extension $\widetilde{\Omega G}$ of $\Omega G$. 
From the  proof of that lemma it is clear  that $\widetilde{\Omega G}$ is disjoint commutative if $\widetilde{\Omega_{(0, \pi)} G}$ is.
%
%Let $H$ be a disjoint commutative central extension of $\Omega_{(0, \pi)} G$. 
%Because $G$ is compact, it is the restriction of a central extension of $L G$, which is also disjoint commutative (\cref{CorollarySemisimpleSimplyConnectedDisjointCommutative}).
%This follows from the fact that the classification of central extensions of $\Omega G$ and $\Omega_{(0, \pi)} G \cong \Omega G$ agree\todo{KW: why is this true, and how does compactness enter?}.
%Hence, there is the canonical crossed module action $\alpha$ of $P_e G$ on $H$, as in \cref{CanonicalAction}.
%\todo[inline]{KW: Wollen wir nicht lieber in der Definition von $\XExt G$ reinschreiben, dass $H$ von einer Erweiterung von $\Omega G$ kommt? Dann k\"onnten wir hier auf die Kompaktheit verzichten, und auf simply connected glaube ich auch! }
%
%In other words, $\alpha$ is the only crossed module action of $P_e G$ on $\Omega_{(0, \pi)} G$.
%This shows that the functor is essentially surjective.
\end{proof}

\begin{remark}
The group homomorphism $\kappa_{\gamma}$ from the proof of \cref{ThmUniqueAction}, defined in \cref{OmegaDefinition}, can be defined for any two central crossed module actions $\alpha$ and $\alpha^\prime$ for the homomorphism $t:\widetilde{\Omega_{(0, \pi)} G} \to P_eG$, for any central extension $\widetilde{\Omega G}$ and without assuming that $G$ is semisimple.
As both $\alpha$ and $\alpha^\prime$ satisfy the Peiffer identity,  $\kappa_\gamma$ depends on $\gamma$ only through the endpoint $g = \gamma(\pi)$.
%
\begin{comment}
$\alpha^\prime_{t(\Psi)}(\Phi)\alpha_{t(\Psi)}(\Phi)^{-1} = \Psi \Phi \Psi^{-1} (\Psi \Phi \Psi^{-1})^{-1} = 1$.
\end{comment}
%
Varying $g$, we obtain a map
\begin{equation*}
\kappa : G \to \Hom(\Omega_{(0, \pi)} G, \U(1)).
\end{equation*}
The group $\Hom(\Omega_{(0, \pi)} G, \U(1))$ carries a right action of $P_e G$ given by pre-composition with the conjugation action on $\Omega_{(0, \pi)} G$, which descends to an action of $G$ as $\Omega_{(0, \pi)} G$ acts trivially.
One can then show that $\kappa$ is a diffeological group 1-cocycle with values in the right $G$-module $\Hom(\Omega_{(0, \pi)} G, \U(1))$, equipped with the functional diffeology.
%
\begin{comment}
We calculate
\begin{equation*}
\begin{aligned}
  \kappa_{g_1g_2}(t(\Phi)) 
  &= \alpha^\prime_{\gamma_1}(\alpha^\prime_{\gamma_2}(\Phi))\alpha_{\gamma_1}(\alpha_{\gamma_2}(\Phi))^{-1}\\
  &=\alpha^\prime_{\gamma_1}(\alpha^\prime_{\gamma_2}(\Phi)\alpha_{\gamma_2}(h)^{-1} \alpha_{\gamma_2}(\Phi))\alpha_{\gamma_1}(\alpha_{\gamma_2}(\Phi))^{-1}\\
  &=\alpha^\prime_{\gamma_1}(\underbrace{\alpha^\prime_{\gamma_2}(\Phi)\alpha_{\gamma_2}(\Phi)^{-1}}_{= \kappa_{g_2}(t(\Phi) \in \U(1)}) \alpha^\prime_{\gamma_1}( \alpha_{\gamma_2}(\Phi))\alpha_{\gamma_1}(\alpha_{\gamma_2}(\Phi))^{-1}\\
  &= \kappa_{g_2}(t(\Phi)) \kappa_{g_1}(t(\alpha_{\gamma_2}(\Phi)))\\
  &=  \kappa_{g_2}(t(\Phi)) \kappa_{g_1}(\gamma_2 \cdot t(\Phi)\cdot \gamma_2^{-1}),
\end{aligned}
\end{equation*}
where $\gamma_1, \gamma_2 \in P_eG$ satisfy $\gamma_i(\pi) = g_i$.
\end{comment}
%

Conversely, modifying $\alpha$ by a general $\Hom(\Omega_{(0, \pi)} G, \U(1))$-valued diffeological group cocycle $\kappa$ on $G$ according to formula \cref{OmegaDefinition} gives another crossed module action of $P_e G$ on $\Omega_{(0, \pi)} G$, and the resulting crossed module is isomorphic in $\XExt G$ to the previous one if and only if $\kappa$ is a coboundary.
\end{remark}

\subsection{Fusion factorizations}

\label{SectionFusionFactorizations}

Let $\widetilde{\Omega G}$ be a disjoint commutative central extension of $\Omega G$. 
In \cref{CanonicalCrossedModule} we have constructed a canonical crossed module $X(\widetilde{\Omega G})$   associated to $\widetilde{\Omega G}$.
The functor $\mathcal{G}$ from the adjunction \cref{AdjunctionLieCross} turns it into a  strict Lie 2-group. Explicitly, this Lie 2-group, $\mathcal{G}(X(\widetilde{\Omega G}))$, has the underlying groupoid 
\begin{equation} \label{StrictFromCrossed}
\xymatrix@M=5pt{ 
\widetilde{\Omega_{(0, \pi)} G} \rtimes_\alpha P_e G \ar@<5pt>[r]^-{s} \ar@<-5pt>[r]_-{t} & P_eG, \ar[l]|-{i} 
}
\end{equation}
where $i(\gamma) = (1, \gamma)$, $s(\Phi, \gamma) = \gamma$ and $t(\Phi, \gamma) = t(\Phi)\gamma$.

However, a more natural form for a strict Lie 2-group constructed from a central extension $\widetilde{\Omega G}$ would be
\begin{equation} \label{2GroupWithCentralExtension}
\xymatrix@M=5pt{ 
\widetilde{P_e G^{[2]}} \ar@<5pt>[r]^-{s} \ar@<-5pt>[r]_-{t} & P_eG, \ar[l]|{i} 
}
\end{equation}
i.e., its Lie group of morphisms is $\widetilde{P_e G^{[2]}} \subset \widetilde{\Omega G}$, and the maps $s$ and $t$ are as in  \cref{SourceTargetMaps}.
We claim that the missing ingredient to obtain such a form is the identity map $i$. It can be provided by a so-called fusion factorization, see \cite[Definition~5.5]{KristelWaldorf1}. 
 A \emph{fusion factorization} for a central extension $\widetilde{\Omega G}$ is a Lie group homomorphism
\begin{equation} \label{FusionFactorizationProperty}
 i: P_e G \to \widetilde{P_e G^{[2]}} \qquad \text{such that} \qquad \pi (i(\gamma)) = \gamma \cup \gamma.
\end{equation}

\begin{lemma}
\label{Lie-2-group-from-fusion-factorization}
Let $\widetilde{\Omega G}$ be a disjoint commutative central extension of $\Omega G$. Then, any fusion factorization $i$ for $\widetilde{\Omega G}$ provides an identity map  completing \cref{2GroupWithCentralExtension} to a strict Lie 2-group $\TwoGroupFusFac{\widetilde{\Omega G}}{i}$, together with a canonical isomorphism  $\TwoGroupFusFac{\widetilde{\Omega G}}{i} \cong \mathcal{G}(X(\widetilde{\Omega G}))$.
\end{lemma}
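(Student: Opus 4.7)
The plan is to verify the two hypotheses of \cref{LemmaMinimalData2Group} to get the Lie 2-group structure, and then exhibit an explicit group-level isomorphism with $\mathcal{G}(X(\widetilde{\Omega G}))$.

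First I would check that the maps $s, t: \widetilde{P_eG^{[2]}} \to P_eG$ from \cref{SourceTargetMaps}, together with the candidate identity map $i$ given by the fusion factorization, satisfy condition \eqref{cor:MinimalData2Group:a} of \cref{LemmaMinimalData2Group}. This is immediate from \cref{FusionFactorizationProperty}: since $\pi(i(\gamma)) = \gamma \cup \gamma$, we have $s(i(\gamma)) = \gamma = t(i(\gamma))$ by definition of $s$ and $t$. Smoothness of $s, t, i$ is part of the data. For condition \eqref{cor:MinimalData2Group:b}, I would identify $\ker(s) = \widetilde{\Omega_{(0,\pi)}G}$ (see \cref{kernel-of-s}) and observe in the same way that $\ker(t) = \widetilde{\Omega_{(\pi,2\pi)}G}$. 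Since $(0,\pi)$ and $(\pi,2\pi)$ are disjoint, the disjoint commutativity assumption on $\widetilde{\Omega G}$ gives exactly that these two subgroups commute. \cref{LemmaMinimalData2Group} then produces the strict Fr\'echet Lie 2-group $\TwoGroupFusFac{\widetilde{\Omega G}}{i}$.

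Next, to construct the canonical isomorphism $\mathcal{G}(X(\widetilde{\Omega G})) \cong \TwoGroupFusFac{\widetilde{\Omega G}}{i}$, I would take the identity on objects and, on morphisms, the map
\begin{equation*}
F : \widetilde{\Omega_{(0,\pi)}G} \rtimes_\alpha P_eG \longrightarrow \widetilde{P_eG^{[2]}}, \qquad (\Phi, \gamma) \longmapsto \Phi \cdot i(\gamma).
\end{equation*}
That $F$ is a group homomorphism is the key computation: since $i(\gamma)$ is by construction a lift of $\gamma \cup \gamma$, conjugation by $i(\gamma)$ on $\widetilde{\Omega_{(0,\pi)}G}$ agrees with the canonical action $\alpha_\gamma$ of \cref{CanonicalAction}, so
\begin{equation*}
F(\Phi_1,\gamma_1)\, F(\Phi_2,\gamma_2) = \Phi_1 i(\gamma_1)\Phi_2 i(\gamma_1)^{-1}\, i(\gamma_1)i(\gamma_2) = \Phi_1 \alpha_{\gamma_1}(\Phi_2)\, i(\gamma_1\gamma_2) = F((\Phi_1,\gamma_1)(\Phi_2,\gamma_2)).
\end{equation*}
Smoothness of $F$ follows from smoothness of $i$ and of the group product of $\widetilde{\Omega G}$. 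Its inverse is $\Xi \mapsto (\Xi \cdot i(s(\Xi))^{-1},\, s(\Xi))$, which is well-defined into the semidirect product because $\Xi \cdot i(s(\Xi))^{-1}$ projects to $(t(\Xi)\cdot s(\Xi)^{-1}) \cup e \in \Omega_{(0,\pi)}G$, hence lies in $\ker(s) = \widetilde{\Omega_{(0,\pi)}G}$; it is smooth since $s$ admits smooth local sections are not needed—$s$ itself is smooth. Finally I would check compatibility with $s$, $t$, $i$: from $\pi(\Phi \cdot i(\gamma)) = (t(\Phi) \cup e)(\gamma \cup \gamma) = (t(\Phi)\gamma) \cup \gamma$ we read off $s(F(\Phi,\gamma)) = \gamma$ and $t(F(\Phi,\gamma)) = t(\Phi)\gamma$, matching the source and target maps of \cref{StrictFromCrossed}, while $F(1,\gamma) = i(\gamma)$ matches the identity maps. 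This exhibits the required strict isomorphism of Lie 2-groups.

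The only mildly subtle step is the identification of the conjugation action by $i(\gamma)$ with the canonical action $\alpha_\gamma$; this, however, is essentially built into the definition of $\alpha$ in \cref{CanonicalAction}, since $i(\gamma)$ is a concrete lift of the doubled loop $\gamma \cup \gamma$ and $\alpha$ was defined by conjugation with an arbitrary such lift. Once that observation is in place, everything else is a direct unpacking of the semidirect-product structure on $\mathcal{G}(X(\widetilde{\Omega G}))_1$ against the product on $\widetilde{P_eG^{[2]}}$.
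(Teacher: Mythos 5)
Your proof is correct and follows essentially the same route as the paper: the first half (verifying conditions (a) and (b) of \cref{LemmaMinimalData2Group} via \cref{FusionFactorizationProperty} and disjoint commutativity) is identical, and your explicit isomorphism $F(\Phi,\gamma)=\Phi\cdot i(\gamma)$ is precisely the inverse of the counit $\eta_{\TwoGroupFusFac{\widetilde{\Omega G}}{i}}$ that the paper invokes after observing the equality of crossed modules $X(\widetilde{\Omega G})=\mathcal{X}(\TwoGroupFusFac{\widetilde{\Omega G}}{i})$. The key observation in both arguments is the same: conjugation by the concrete lift $i(\gamma)$ of $\gamma\cup\gamma$ realizes the canonical action $\alpha_\gamma$ of \cref{CanonicalAction}.
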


\begin{proof}
In order to show that $i$ turns \cref{2GroupWithCentralExtension} into a Lie 2-group, we use \cref{LemmaMinimalData2Group}:
The requirement that $\ker(s)$ and $\ker(t)$ commute is the assumption that $\widetilde{\Omega G}$ is disjoint commutative,
and the property \cref{FusionFactorizationProperty} implies that both  $t \circ i$ and $s \circ i$ are the identity on $P_eG$.

In order to construct the isomorphism
 $\TwoGroupFusFac{\widetilde{\Omega G}}{i} \cong \mathcal{G}(X(\widetilde{\Omega G}))$ we observe that 
 \begin{equation}
 \label{coincidence-of-crossed-modules}
X(\widetilde{\Omega G})=\mathcal{X}(\TwoGroupFusFac{\widetilde{\Omega G}}{i})
\end{equation}
where on the left is the crossed module of \cref{CanonicalCrossedModule} and $\mathcal{X}$ is the functor from the adjunction \cref{AdjunctionLieCross}.
Indeed, the crossed module on the left is $\widetilde{\Omega_{(0, \pi)} G} \to P_e G$ with the canonical action  given by \cref{CanonicalAction},
and the crossed module on the right is $\mathrm{ker}(s) \stackrel{t}{\to} P_eG$ with the action given by
\begin{equation} \label{AlphaViai}
  \alpha_\gamma(\Phi) = i(\gamma) \Phi\, i(\gamma)^{-1}.
\end{equation}
First, we recall from \cref{kernel-of-s} that $\mathrm{ker}(s)= \widetilde{\Omega_{(0, \pi)} G}$, and observe that the Lie group homomorphisms to $P_eG$ coincide. Second, for $\gamma \in P_e G$, the fusion factorization $i(\gamma)$ provides a concrete choice for a lift of $\gamma \cup \gamma$, which means that the formulas \cref{CanonicalAction} and \cref{AlphaViai} coincide. This shows the equality in \cref{coincidence-of-crossed-modules}.
Now, applying the functor $\mathcal{G}$ to \cref{coincidence-of-crossed-modules} and using the counit 
\begin{equation*}
\eta_{\TwoGroupFusFac{\widetilde{\Omega G}}{i}}:\TwoGroupFusFac{\widetilde{\Omega G}}{i} \to \mathcal{GX}(\TwoGroupFusFac{\widetilde{\Omega G}}{i})
\end{equation*}
 establishes the claimed isomorphism. 
 \end{proof}

\begin{comment}
\begin{remark}
We remark that a fusion factorization provides a splitting of the short exact sequence
\begin{equation*}
\xymatrix{
  \widetilde{\Omega_{(0, \pi)}G} \ar[r] & \widetilde{P_e G^{[2]}} \ar[r]^-{s} & P_e G. \ar@/_1pc/[l]_-{i}
}
\end{equation*}
This identifies $\widetilde{P_e G^{[2]}}$ with the semidirect product $\widetilde{\Omega_{(0, \pi)} G} \rtimes_\alpha P_eG$, with action $\alpha$ is given by \cref{AlphaViai}.
\end{remark}
\end{comment}

Next we study existence and uniqueness of fusion factorizations.

\begin{lemma} \label{LemmaFusionFactorization}
Let $\widetilde{\Omega G}$ be a central extension of $\Omega G$.
If $G$ is semisimple, there exists at most one fusion factorization for $\widetilde{\Omega G}$.
\end{lemma}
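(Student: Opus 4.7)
The plan is to start with two fusion factorizations $i, i' : P_eG \to \widetilde{P_eG^{[2]}}$ for $\widetilde{\Omega G}$ and show they must coincide by producing a single $\U(1)$-valued homomorphism measuring their difference, which then has to be trivial by the earlier result \cref{TheoremSemisimpleNoHoms} on the absence of non-trivial abelian Lie group homomorphisms out of $P_eG$.

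First, I would define $\varphi : P_eG \to \widetilde{\Omega G}$ by $\varphi(\gamma) := i(\gamma)\, i'(\gamma)^{-1}$. Since by \cref{FusionFactorizationProperty} both $i(\gamma)$ and $i'(\gamma)$ project to the same element $\gamma \cup \gamma \in \Omega G$, we get $\pi(\varphi(\gamma)) = \const_e$, so $\varphi$ lands in $\U(1) \subset \widetilde{\Omega G}$. Smoothness of $\varphi$ is automatic from smoothness of $i$, $i'$ and of the Lie group operations on $\widetilde{\Omega G}$.

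Next, I would check that $\varphi$ is a group homomorphism. Using the homomorphism property of $i$ and $i'$ together with centrality of $\U(1) \subset \widetilde{\Omega G}$, I would compute
\begin{equation*}
\varphi(\gamma_1\gamma_2) = i(\gamma_1)i(\gamma_2)i'(\gamma_2)^{-1}i'(\gamma_1)^{-1} = i(\gamma_1)\varphi(\gamma_2)i'(\gamma_1)^{-1} = \varphi(\gamma_2)\,i(\gamma_1)i'(\gamma_1)^{-1} = \varphi(\gamma_1)\varphi(\gamma_2),
\end{equation*}
where in the last step I use that $\U(1)$ is abelian. Hence $\varphi : P_eG \to \U(1)$ is a smooth Lie group homomorphism to an abelian Lie group.

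Finally, since $G$ is semisimple, \cref{TheoremSemisimpleNoHoms} forces $\varphi$ to be trivial, i.e., $i(\gamma) = i'(\gamma)$ for all $\gamma \in P_eG$. This establishes uniqueness. The argument is quite short and there is no real obstacle: the only subtlety is making sure $\varphi$ is a homomorphism, which works cleanly because $\varphi$ takes values in the central $\U(1)$, and then the heavy lifting is already done by \cref{TheoremSemisimpleNoHoms}.
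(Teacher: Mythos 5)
Your proof is correct and follows essentially the same route as the paper: both define $\varphi(\gamma) = i(\gamma)\,i'(\gamma)^{-1}$, observe it takes values in the central $\U(1)$, verify it is a homomorphism using centrality, and then invoke \cref{TheoremSemisimpleNoHoms} to conclude $\varphi$ is trivial. No gaps.
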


\begin{proof}
Let $i$ and $i^\prime$ be two fusion factorizations.
We define a map $\varphi: P_e G \to \widetilde{\Omega G}$ by
\begin{equation*}
  \varphi(\gamma) = i(\gamma) i^\prime(\gamma)^{-1}.
\end{equation*}
As both $i(\gamma)$ and $i^\prime(\gamma)$ lie over $\gamma \cup \gamma$, $\varphi$ takes values in $\U(1) \subset \widetilde{\Omega G}$.
$\varphi$ is a group homomorphism, because
\begin{equation*}
  \varphi(\gamma_1)\varphi(\gamma_2) = i(\gamma_1) i^\prime(\gamma_1)^{-1} \varphi(\gamma_2) = i(\gamma_1)\varphi(\gamma_2) i^\prime(\gamma_1)^{-1} = \varphi(\gamma_1 \gamma_2).
\end{equation*}
By \cref{TheoremSemisimpleNoHoms}, $\varphi$ is trivial.
Hence $i = i^\prime$.
\end{proof}

\begin{remark}
The proof above shows that in the general (not necessarily semisimple) case, if a fusion factorization exists, then the Poincar\'e dual $(P_e G)^* = \Hom(P_e G, \U(1))$ acts freely and transitively on the set of fusion factorizations.
\end{remark}

We denote by $\sigma: \Omega G \to \Omega G$ the group homomorphism obtained by pullback with the \quot{flip} diffeomorphism $t \mapsto -t$.

\begin{lemma} \label{LemmaExistenceFusionFactorization}
Let $\widetilde{\Omega G}$ be a central extension of $\Omega G$.
Suppose there exists a group homomorphism $\tilde{\sigma} : \widetilde{\Omega G} \to \widetilde{\Omega G}$ covering $\sigma$ which is $\U(1)$-anti-equivariant in the sense that $\tilde{\sigma}(z\Phi) = \overline{z}\tilde\sigma(\Phi)$ for all $z \in \U(1)$ and $\Phi\in \widetilde{\Omega G}$.
Then there exists a unique fusion factorization $i$ such that $\tilde{\sigma} \circ i = i$.
\end{lemma}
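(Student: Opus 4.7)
My plan is to reinterpret the desired $\tilde\sigma$-invariant fusion factorization as a $\tilde\sigma$-invariant, smooth, group-homomorphic section of a pulled-back central extension over $P_e G$, and then to combine Neeb's classification with an averaging trick. To set this up, consider the pullback central extension $E$ of $\widetilde{\Omega G}$ along the group homomorphism $P_e G \to \Omega G$, $\gamma \mapsto \gamma \cup \gamma$. Since $\sigma$ fixes each doubled loop $\gamma \cup \gamma$, the homomorphism $\tilde\sigma$ restricts to a $\U(1)$-anti-equivariant automorphism of $E$ covering $\id_{P_e G}$, and a $\tilde\sigma$-invariant fusion factorization is precisely a $\tilde\sigma$-invariant smooth group-homomorphic section of $E \to P_e G$. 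Recall that $P_e G$ is contractible (by a standard reparametrization argument), hence simply connected.

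I would first produce some smooth group-homomorphic section $i_0 : P_e G \to E$, not necessarily $\tilde\sigma$-invariant. After choosing a continuous linear splitting, the Lie algebra of $E$ takes the form $P_0\mathfrak{g} \oplus \R$ with bracket $[(X,\lambda),(Y,\mu)] = ([X,Y],\omega(X,Y))$ for a continuous cocycle $\omega$, and $d\tilde\sigma$ necessarily has the form $(X,\lambda) \mapsto (X, f(X) - \lambda)$ for a continuous linear functional $f : P_0\mathfrak{g} \to \R$ (since it covers $\id_{P_0\mathfrak{g}}$ and restricts to $-\id$ on the central $\R$). Imposing that $d\tilde\sigma$ respects the Lie bracket forces $\omega(X,Y) = \tfrac{1}{2} f([X,Y])$, so $\omega$ is a coboundary. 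Since $P_e G$ is simply connected, \cite[Thm.~7.12]{NeebCentral} implies that $E$ is trivial as a central extension, yielding the desired section $i_0$.

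To upgrade $i_0$ to a $\tilde\sigma$-invariant section, set $c(\gamma) := \tilde\sigma(i_0(\gamma)) \cdot i_0(\gamma)^{-1}$; a short computation, using centrality of $\U(1)$ and the fact that $\tilde\sigma, i_0$ are homomorphisms, shows that $c : P_e G \to \U(1)$ is a smooth group homomorphism. By simple-connectedness of $P_e G$ it lifts along the covering $\R \to \U(1)$, $t \mapsto e^{it}$, to a smooth group homomorphism $\tilde c : P_e G \to \R$, and $\chi := e^{i\tilde c /2}$ is then a smooth homomorphism with $\chi^2 = c$. The modified section $i := \chi \cdot i_0$ is still a smooth group-homomorphic section of $E$, and
\begin{equation*}
\tilde\sigma(i(\gamma)) \;=\; \overline{\chi(\gamma)}\, c(\gamma)\, i_0(\gamma) \;=\; \chi(\gamma)^{-1}\chi(\gamma)^{2}\, i_0(\gamma) \;=\; i(\gamma),
\end{equation*}
as required. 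For uniqueness, any other such $i'$ satisfies $i' = \chi'\cdot i$ for a smooth homomorphism $\chi' : P_e G \to \U(1)$; $\tilde\sigma$-invariance then forces $\chi'(\gamma)^2 = 1$, so $\chi'$ takes values in $\{\pm 1\}$ and is trivial by connectedness of $P_e G$. The main obstacle in this plan is the Lie algebra bookkeeping that identifies $\omega$ as a coboundary; once this is in place, the appeal to Neeb's theorem, the averaging, and the uniqueness argument are all routine.
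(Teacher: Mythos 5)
Your proposal is correct, but it takes a genuinely different route from the paper. The paper never produces a preliminary section $i_0$ and never invokes classification theory: it works directly with the fixed-point set of $\tilde\sigma$, namely the kernel of $w(\gamma,\Phi)=\Phi^{-1}\tilde\sigma(\Phi)$ on the pulled-back extension, observes that anti-equivariance gives $w(\gamma,z\Phi)=\overline{z}^2w(\gamma,\Phi)$ so that $\ker(w)\to P_eG$ is a double cover, and then uses contractibility of $P_eG$ to split off the identity component as the graph of $i$. You instead first trivialize the pulled-back extension $E$ over $P_eG$ — via the Lie-algebra computation showing that the existence of $\tilde\sigma$ forces $2\omega = f\circ[\cdot,\cdot]$, hence $[\omega]=0$, combined with Neeb's injectivity theorem over the simply connected $P_eG$ — and then correct the resulting section $i_0$ by an explicit square root $\chi$ of the defect homomorphism $c=\tilde\sigma(i_0)\,i_0^{-1}$. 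Both arguments ultimately exploit the same two inputs (the \quot{squaring} caused by anti-equivariance, and the triviality of $\pi_0$ and $\pi_1$ of $P_eG$), and your uniqueness step is essentially identical to the paper's (a $\{\pm1\}$-valued homomorphism on a connected group is trivial). What your route buys is the explicit byproduct that $E$ is trivial as a central extension of $P_eG$ and a concrete formula for $i$ in terms of $i_0$; what it costs is reliance on Neeb's Theorem~7.12 and on a continuous linear splitting of the Lie algebra extension — both of which the paper uses elsewhere, so your argument is consistent with its framework, but the paper's own proof of this particular lemma is more elementary and self-contained.
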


\begin{proof}
Consider the map
\begin{equation*}
w : P_e G \times_{\Omega G} \widetilde{\Omega G} \longrightarrow \U(1), \qquad w(\gamma, \Phi) = \Phi^{-1} \tilde{\sigma}(\Phi),
\end{equation*}
where the fibre product is taken over the diagonal map $P_e G \to \Omega G$, $\gamma \mapsto \gamma \cup \gamma$.
Since $\pi(\Phi) = \pi(\tilde{\sigma}(\Phi)) = \gamma \cup \gamma$, we have $\pi(w(\gamma, \Phi)) = \const_e$; hence, $w(\gamma, \Phi)$ takes values in $\U(1)$. 
Moreover, $w$ is a group homomorphism:
\begin{equation*}
\begin{aligned}
w(\gamma, \Phi)w(\eta, \Psi) &= w(\gamma, \Phi) \Psi^{-1} \tilde{\sigma}(\Psi)\\
&= \Psi^{-1} w(\gamma, \Phi) \tilde{\sigma}(\Psi)\\
&= \Psi^{-1} \Phi^{-1} \tilde{\sigma}(\Phi)\tilde{\sigma}(\Psi) \\
&= w(\gamma\eta, \Phi\Psi).
\end{aligned}
\end{equation*}
For $z \in \U(1)$, we have
\begin{equation*}
  w(\gamma, z\Phi) = (z \Phi)^{-1} \tilde{\sigma}(z \Phi) = \overline{z}^2 \Phi^{-1} \tilde{\sigma}(z \Phi) = \overline{z}^2 w(\gamma, \Phi).
\end{equation*}
Hence, if $(\gamma, \Phi) \in \ker(w)$, then we have $(\gamma, z\Phi) \in \ker(w)$ if and only if $\overline{z}^2 = 1$, that is, $z = \pm 1$.
We obtain that $\pr_1:\ker(w) \to P_eG$ is a double cover.
Since $P_e G$ is contractible, this double cover is necessarily trivial.
Therefore, its restriction  to the identity component $\ker(w)_0$ is an isomorphism of Lie groups $\pr_1{|_{\ker(w)_0}}:\ker(w)_0 \to P_eG$.
Then, $i := \pr_2 \circ (\pr_1{|_{\ker(w)_0}})^{-1}$ is a fusion factorization.

Conversely, any fusion factorization $i$ such that $\widetilde{\sigma} \circ i = i$ gives a section of $\pr_1:\ker(w) \to P_eG$ with $i(\const_e) = 1$. 
But since the fibres of $\ker(w)$ are discrete, there is at most one such section.
\end{proof}

\begin{theorem} \label{ThmExistenceFusionFactorization}
Let $\widetilde{\Omega G}$ be a central extension of $\Omega G$, where $G$ is simply connected and semisimple.
Then, there exists a unique fusion factorization for $\widetilde{\Omega G}$.
\end{theorem}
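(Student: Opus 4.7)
The plan is to split the theorem into existence and uniqueness. Uniqueness is immediate from \cref{LemmaFusionFactorization}, which already handles the semisimple case. For existence, I will invoke \cref{LemmaExistenceFusionFactorization}: it suffices to construct a group homomorphism $\tilde{\sigma} : \widetilde{\Omega G} \to \widetilde{\Omega G}$ covering the flip $\sigma$ and satisfying $\tilde{\sigma}(z\Phi) = \bar{z}\,\tilde{\sigma}(\Phi)$ for all $z \in \U(1)$.

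To produce such a $\tilde{\sigma}$, my plan is to reinterpret it as an isomorphism of central extensions. Define $\widetilde{\Omega G}^{\sigma}$ to be the central extension of $\Omega G$ whose underlying group coincides with $\widetilde{\Omega G}$, but equipped with projection $\sigma \circ \pi$ and central $\U(1)$-embedding $z \mapsto z^{-1}$. A direct inspection shows that a homomorphism $\tilde{\sigma}$ of the desired form is precisely the same data as an isomorphism of central extensions $\widetilde{\Omega G} \to \widetilde{\Omega G}^{\sigma}$.

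The next step is to show that these two central extensions are isomorphic, which I would verify by comparing their Lie algebra cocycle classes. By \cref{lemma:equivariantcocycle}, any cocycle class on $\Omega\mathfrak{g}$ is represented by one of the standard form $\omega(X, Y) = \int_{S^{1}} b(X(t), Y'(t))\, dt$. The substitution $s = -t$, using that $(\sigma Y)'(t) = -Y'(-t)$, then shows that $\sigma^{*}\omega = -\omega$ for such $\omega$. The reversal of the $\U(1)$-embedding contributes a further sign change at the level of cocycles, so the class of $\widetilde{\Omega G}^{\sigma}$ is $-\sigma^{*}\omega = \omega$, agreeing with that of $\widetilde{\Omega G}$. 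Since $G$ is simply connected, both $\pi_{0}(\Omega G) = \pi_{1}(G)$ and $\pi_{1}(\Omega G) = \pi_{2}(G)$ vanish, so $\Omega G$ is simply connected and the classifying map \cref{CanonicalMap} is injective. This yields the desired isomorphism $\widetilde{\Omega G} \cong \widetilde{\Omega G}^{\sigma}$ of central extensions, and hence the required $\tilde{\sigma}$.

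The main obstacle I expect is the careful bookkeeping of the three sign conventions at play: the cocycle-level sign under $\sigma^{*}$, the sign contributed by reversing the $\U(1)$-embedding, and the precise translation between an abstract isomorphism of central extensions and the concrete anti-equivariance property required by \cref{LemmaExistenceFusionFactorization}. Once these are correctly aligned, the theorem follows by combining \cref{LemmaExistenceFusionFactorization} with the uniqueness from \cref{LemmaFusionFactorization}.
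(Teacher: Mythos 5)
Your proposal is correct and follows essentially the same route as the paper: uniqueness via \cref{LemmaFusionFactorization}, and existence by producing the anti-equivariant $\tilde{\sigma}$ of \cref{LemmaExistenceFusionFactorization} from an isomorphism of central extensions detected on Lie algebra cocycles of the standard form \cref{FormulaForCocycle}, using that $\Omega G$ is simply connected. Your $\widetilde{\Omega G}^{\sigma}$ (same group, projection $\sigma\circ\pi$, reversed $\U(1)$-embedding) is exactly the paper's $\sigma^{*}\widetilde{\Omega G}{}^{*}$, and your sign bookkeeping $-\sigma^{*}\omega=\omega$ matches theirs.
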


\begin{proof}
Uniqueness was shown in \cref{LemmaFusionFactorization}, so it remains to show existence.
We claim that  our assumptions on $G$ imply the conditions of \cref{LemmaExistenceFusionFactorization}.
To see this, consider the dual (inverse) central extension $\widetilde{\Omega G}{}^*$.
Then $\sigma^*\widetilde{\Omega G}{}^*$ is another central extension, which comes with a canonical Lie group homomorphism 
\begin{equation*}
\tilde{\sigma}^\prime : \widetilde{\Omega G} \to \sigma^*\widetilde{\Omega G}{}^*
\end{equation*}
that covers $\sigma$ and is $\U(1)$-anti-equivariant.
%
\begin{comment}
The map $\tilde\sigma'$ is $\Phi\mapsto (\sigma(\pi(\Phi)),\Phi)$.

It satisfies $\tilde\sigma'(z\Phi)=(\sigma(\pi(\Phi)),z\Phi)=\bar z\tilde\sigma'(\Phi)$. 

As the $\U(1)$-torsor dual to $\U(1)$ is $\overline{\U(1)}$.
\end{comment}
%
By our assumptions, the homomorphism $h_0(\CExt(\Omega G)) \longrightarrow H^2_c(L\mathfrak{g}, \R)$ of \cref{CanonicalMap} is injective, so that central extensions are determined their 2-cocycles.
Now, if $\omega$ is the 2-cocycle classifying $\widetilde{\Omega G}$, then the dual extension $\widetilde{\Omega G}{}^*$ is classified by $-\omega$.
By \cref{lemma:equivariantcocycle} we may assume that $\omega$ is $G$-equivariant, hence of the form \cref{FormulaForCocycle}. For such a cocycle $\omega$ the action of $\sigma$ on $H^2_c(L\mathfrak{g}, \R)$ replaces $\omega$ by $-\omega$, so that $\sigma^*\widetilde{\Omega G}{}^*$ is again classified by $\omega$.
By \cref{LemmaAutoCExt}, $\sigma^*\widetilde{\Omega G}{}^*$ is, as a central extension,  isomorphic to $\widetilde{\Omega G}$. 
The post-composition of this isomorphism with $\tilde{\sigma}^\prime$ provides an anti-linear bundle map $\tilde{\sigma}$ covering $\sigma$, and \cref{LemmaExistenceFusionFactorization} completes the proof.
\end{proof}

\begin{remark}
Observe that the proof of \cref{ThmExistenceFusionFactorization} actually shows that under the assumptions of \cref{ThmExistenceFusionFactorization}, there exists a map $\tilde{\sigma}$ as in \cref{LemmaExistenceFusionFactorization}, and the unique fusion factorization $i$ satisfies additionally $\tilde{\sigma} \circ i = i$.
\end{remark}

\subsection{Classification of the  Lie 2-groups}

\label{classification-of-Lie-2-groups}

In this section we prove that -- in case of a simple and simply connected Lie group $G$ and for a \quot{basic} central extension -- our canonical Lie 2-group $\mathcal{G}=\mathcal{G}(X(\widetilde{\Omega G}))$ of \cref{SectionCrossedModulesFromCExt} becomes under geometric realization a 3-connected cover of $G$. For this purpose we will   use the methods developed in \cite{BaezStevenson,BCSS}. 
We start by recalling some notions and basic facts about Lie 2-groups (as used, e.g., in \cite[\S4.2]{BCSS}).
A \emph{strict homomorphism} between strict Lie 2-groups  consists of two Lie group homomorphisms (one between the morphism groups and one between the object groups), which intertwine all structure maps.
The \emph{strict kernel} of such a strict homomorphism is the 2-group obtained by taking the level-wise kernels.
It is a Lie 2-group if both kernels are submanifolds (which is automatic in the finite-dimensional case).
A sequence
\begin{equation*}
0 \longrightarrow \mathcal{K} \longrightarrow \mathcal{G} \longrightarrow \mathcal{H} \longrightarrow 0
\end{equation*}
of strict Lie 2-groups and strict homomorphisms is called \emph{strictly exact} if it is exact on both object and morphism level.

Taking the nerve of a strict Lie 2-group $\mathcal{G}$ and forgetting the smooth structure, we obtain a simplicial space $N \mathcal{G}$, where $(N\mathcal{G})_0 = \mathrm{Ob}(\mathcal{G})$ and whose $n$-th space, $n \geq 1$, is the space of  $n$-strings of composable morphisms, 
\begin{equation*}
  (N\mathcal{G})_n = \{ (x_1, \dots, x_n) \in \mathrm{Mor}(\mathcal{G})^n \mid s(x_j) = t(x_{j-1}), j=2, \dots, n\}.
\end{equation*}
Applying the geometric realization functor, we obtain a CW complex $|\mathcal{G}|$,  the \emph{geometric realization} of $\mathcal{G}$.
Pointwise multiplication in $\mathrm{Mor}(\mathcal{G})$ endows each of the spaces $(N \mathcal{G})_n$ with the structure of a topological group (in fact, a Lie group) for which the simplicial structure maps are homomorphisms.
Put differently, we have a group object in the category of simplicial spaces, and since the geometric realization functor preserves finite products, it sends group objects to group objects, so that $|\mathcal{G}|$ acquires the structure of a topological group (see also Lemma~1 in \cite{BaezStevenson}).
It is moreover a fact that geometric realization takes a short strictly exact sequence of Lie 2-groups to an exact sequence of topological groups \cite[\S4.2]{BCSS}.

\medskip

Let $G$ be a finite-dimensional, connected, and semisimple Lie group and let $\widetilde{\Omega G}$ be a disjoint commutative central extension of the loop group $\Omega G$. 
Let 
\begin{equation*}
\mathcal{G} = \mathcal{G}(X(\widetilde{\Omega G}))
\end{equation*}
be the  Lie 2-group corresponding to the crossed module constructed in \cref{SectionCrossedModulesFromCExt}.
Since two objects $\beta_1,\beta_2\in P_eG$ are isomorphic in $\mathcal{G}$ if and only if they have the same end point, $\mathcal{G}$ comes with a canonical strict morphism to the strict Lie 2-group $G\dis$  (see \cref{ex:2-groupsOrdinaryGroups}), given by end point evaluation.
Let $G{\dis'}$ be the strict 2-group with objects $P_e G$, morphisms $P_e G^{[2]}$, and the obvious structure maps.
Then we have a factorization,
\begin{equation}
\label{Factorization}
  \mathcal{G} \longrightarrow G\dis^\prime \longrightarrow G\dis,
\end{equation}
where the first map is the identity on objects and the footpoint projection on morphisms, while the morphism $G\dis^\prime \to G\dis$ is end point evaluation, both on objects and morphisms. It is straightforward to show that the  second arrow in \cref{Factorization} is a weak equivalence.
\begin{comment}
$G'\dis$ is the \v Cech groupoid of the surjective submersion $P_eG \to G$. Its geometric realization is $G$. 

There is also a butterfly between the corresponding crossed modules:
\begin{equation*}
\xymatrix{\Omega_eG \ar[dd] \ar[dr] && 1 \ar[dd] \ar[dl] \\ & P_eG \ar[dr]\ar[dl] \\ P_eG && G  }
\end{equation*}
\end{comment}
%
By construction, the strict kernel of the first homomorphism in \cref{Factorization} is the trivial group on objects and $\U(1) \subset \widetilde{\Omega G}$ on morphisms; in other words, it is the strict 2-group $B\U(1)$ (\cref{ex:2groupsAbelian}).
We therefore get a strict short exact sequence of Lie 2-groups
\begin{equation}
\label{ShortExactSequence}
B\U(1) \longrightarrow \mathcal{G} \longrightarrow G^\prime\dis.
\end{equation}
Geometric realization takes the short exact sequence of Lie 2-groups to a short exact sequence of topological groups.
Here we have $|B\U(1)| \simeq K(\Z, 2)$, and  $|G^\prime_{\mathrm{dis}}| \simeq |G_{\mathrm{dis}}| \simeq G$, so we obtain a homotopy fibre sequence
\begin{equation*}
  K(\Z, 2) \longrightarrow |\mathcal{G}| \longrightarrow G.
\end{equation*}
As $\pi_k(K(\Z, 2)) = 0$ for $k \neq 2$, we obtain that it induces an isomorphism on $\pi_k$ for all $k \notin \{2, 3\}$.
In the latter range, we obtain the exact sequence
\begin{equation}
\label{ExactSequence}
 0 \longrightarrow \pi_3(|\mathcal{G}|) \longrightarrow \pi_3(G) \stackrel{\varphi}{\longrightarrow} \Z \longrightarrow \pi_2(|\mathcal{G}|) \longrightarrow 0,
\end{equation}
where clearly it is crucial to understand the connecting homomorphism $\varphi$.
Let $[\overline{\omega}] \in H^2(\Omega G, \Z)$ be the class corresponding to the central extension $\widetilde{\Omega G}$.

\begin{lemma}
The homomorphism $\varphi$ in \cref{ExactSequence} is given by
  \begin{equation*}
     \varphi(f) = \langle \hat{f}^*\overline{\omega}, [S^2]\rangle,
  \end{equation*}
  where $\hat{f} \in \pi_2(\Omega G)$ is the image of $f$ under the isomorphism $\pi_3(G) \cong \pi_2(\Omega G)$.
\end{lemma}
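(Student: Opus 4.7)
The plan is to reduce $\varphi$ to a Chern-class pairing by looping the fibration $K(\Z,2) \to |\mathcal{G}| \to G$. Since each term is a topological group and geometric realization preserves the short exact sequence structure, looping produces a principal $\U(1)$-bundle (up to homotopy)
\begin{equation*}
\U(1) = \Omega K(\Z, 2) \longrightarrow \Omega|\mathcal{G}| \longrightarrow \Omega G,
\end{equation*}
classified by some $c' \in H^2(\Omega G, \Z)$. Under the loop-suspension isomorphisms $\pi_3(G) \cong \pi_2(\Omega G)$ and $\pi_2(K(\Z,2)) \cong \pi_1(\U(1)) = \Z$, the map $\varphi$ is identified with the connecting homomorphism for this bundle, which by the standard Chern-class description equals $\hat{f} \mapsto \langle \hat{f}^*c', [S^2]\rangle$. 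Hence it suffices to show that $c' = [\overline{\omega}]$.

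For this, I would construct a natural $\U(1)$-equivariant continuous map $\widetilde{\Omega G} \to \Omega|\mathcal{G}|$ covering the identity on $\Omega G$, and verify it is a weak equivalence of $\U(1)$-bundles. By density (as in the proof of \cref{LemmaRestrictionIso}), it suffices to define such a map on $\Phi \in \widetilde{P_eG^{[2]}} \subset \widetilde{\Omega G}$. Any such $\Phi$ is, by \cref{SourceTargetMaps}, a morphism $s(\Phi) \to t(\Phi)$ in the groupoid $\mathcal{G}$, hence a $1$-simplex in the nerve $N\mathcal{G}$. Contractibility of the object space $P_eG$ supplies canonical (up to homotopy) paths from $e$ to $s(\Phi)$ and from $t(\Phi)$ back to $e$; concatenating these identity $2$-morphisms with $\Phi$ produces a based loop in $|\mathcal{G}|$. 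The $\U(1)$-equivariance follows because the subgroup $\U(1) \subset \widetilde{\Omega G}$ acts by automorphisms of the constant loop $e$ and hence contributes precisely the loops in the fiber $K(\Z,2) \subset |\mathcal{G}|$ representing the generator of $\pi_1(\U(1))$.

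The main obstacle is carrying out the auxiliary path choices continuously in $\Phi$ and confirming that the resulting map is indeed a weak equivalence of $\U(1)$-bundles over $\Omega G$; this is plausible since the space of such auxiliary choices is contractible. A cleaner alternative appeals to transgression: the $K(\Z,2)$-extension $|\mathcal{G}| \to G$ is classified by some $c \in H^3(G, \Z)$, and $c'$ is its transgression. When $G$ is simply connected, transgression $H^2(\Omega G, \Z) \to H^3(G, \Z)$ is an isomorphism, and functoriality of both sides in the central extension $\widetilde{\Omega G}$ forces $c' = [\overline{\omega}]$. Either way, combining the identification $c' = [\overline{\omega}]$ with the Chern-class formula of the first paragraph yields the claimed equality $\varphi(f) = \langle \hat{f}^*\overline{\omega}, [S^2]\rangle$.
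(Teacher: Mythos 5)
Your overall strategy is the same as the paper's: identify the fibration $K(\Z,2)\to|\mathcal{G}|\to G$ with the one coming from the central extension $\U(1)\to\widetilde{\Omega G}\to\Omega G$, and then quote the standard fact that the connecting map of a principal $\U(1)$-bundle is pairing with the first Chern class. The difference is in how the identification is made. The paper works one delooping up: it invokes the Baez--Stevenson model $|\mathcal{G}|\cong(EH\rtimes\mathcal{G}_0)/H$, notes that the object groups in \cref{ShortExactSequence} are contractible so that each realization is a $BH$, and uses functoriality of that construction to identify the realized sequence with $B\U(1)\to B\widetilde{\Omega G}\to B\Omega G$ on the nose. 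You instead loop down to $\U(1)\to\Omega|\mathcal{G}|\to\Omega G$ and propose to build an explicit comparison map $\widetilde{\Omega G}\to\Omega|\mathcal{G}|$ out of $1$-simplices of the nerve. That is a legitimate alternative in principle, and your fiberwise degree argument (a $z\in\U(1)$, viewed as an automorphism of the identity object, gives the standard generator of $\pi_1(B\U(1))$) is the right way to conclude once the map exists.

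However, the step you yourself flag as ``the main obstacle'' is exactly the content of the lemma, and it is not discharged. Two specific points. First, ``by density it suffices to define the map on $\widetilde{P_eG^{[2]}}$'' is not a valid reduction: a continuous map does not extend from a dense subgroup; what you actually need is that the inclusion $P_eG^{[2]}\hookrightarrow\Omega G$ (hence $\widetilde{P_eG^{[2]}}\hookrightarrow\widetilde{\Omega G}$) is a weak homotopy equivalence, so that it suffices to compare bundles after restriction --- this is fine, but it is a different argument, and you still must make the auxiliary path choices (the ``caps'' from $e$ to $s(\Phi)$ and from $t(\Phi)$ to $e$) continuously and multiplicatively enough to get a map of fibrations; this is precisely what the functorial $EH\rtimes\mathcal{G}_0$ construction packages for you. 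Second, the ``cleaner alternative'' via transgression is circular as stated: asserting that the class $c\in H^3(G,\Z)$ classifying $|\mathcal{G}|\to G$ transgresses to $[\overline{\omega}]$ is equivalent to the identification you are trying to prove, and ``functoriality of both sides forces $c'=[\overline{\omega}]$'' is not an argument. So the proposal has the right shape, but the crucial identification of $\Omega|\mathcal{G}|\to\Omega G$ with $\widetilde{\Omega G}\to\Omega G$ remains a sketch; the paper's route through $|\mathcal{G}|\simeq B\widetilde{\Omega G}$ is the efficient way to close it.
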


\begin{proof}
We use the following construction of \cite{BaezStevenson}, see Lemma~1 and \S5.3:
Let $\mathcal{G}$ be a Lie 2-group and $(H, \mathcal{G}_0, \alpha, t)$ is the corresponding crossed module.
Then there exists a weakly contractible topological group $EH$ containing $H$ as a normal subgroup, together with an action of $\mathcal{G}_0$ on $EH$ extending the action of $\mathcal{G}_0$ on $H$.
Moreover, $H$ is embedded as a normal subgroup of the semidirect product $EH \rtimes \mathcal{G}_0$, and we have a short exact sequence of topological groups 
\begin{equation*}
 H \longrightarrow EH \rtimes \mathcal{G}_0 \longrightarrow |\mathcal{G}|,
\end{equation*}
whitnessing $\mathcal{G}$ as the quotient
\begin{equation*}
  |\mathcal{G}| \cong (EH \rtimes \mathcal{G}_0)/H.
\end{equation*}
These constructions are functorial in $\mathcal{G}$, so we can apply it to the strict short exact sequence \cref{ShortExactSequence}.
The object group $\mathcal{G}_0$ is contractible in each case (being either trivial or the path group $P_e G$), hence the geometric realization is isomorphic to $BH$ in each case.
Identifying $\Omega_{(0, \pi)} G \cong \Omega G$ and $\widetilde{\Omega_{(0, \pi)} G} \cong \widetilde{\Omega G}$ (see \cref{LemmaRestrictionIso}), we obtain that under geometric realization, the strict short exact sequence \cref{ShortExactSequence} corresponds to the short exact sequence of topological groups
\begin{equation*}
 B\U(1) \longrightarrow B\widetilde{\Omega G} \longrightarrow B{\Omega G}.
\end{equation*}
An inspection of the construction in \cite[\S5.3]{BaezStevenson} reveals that, as expected, this sequence is just the one obtained from applying the classifying space functor $B$ to the short exact sequence corresponding to the central extension $\widetilde{\Omega G}$.

It is now a general fact that for a principal $\U(1)$-bundle $\U(1) \to P \to B$, the boundary map $\pi_2(B) \to \pi_1(\U(1)) \cong \Z$ of the corresponding long exact sequence of homotopy groups is the map that sends $[f] \in \pi_2(B)$ to the first Chern number $\langle c_1(f^*P), [S^2]\rangle$ of the bundle $f^* P \to S^2$.
In our case, the first Chern class of $\widetilde{\Omega G}$ is represented by the left-invariant 2-form $\overline{\omega}$, and so the result follows.
\end{proof}

We summarize the results of this section as the following theorem.

\begin{theorem}
\label{th:classification}
Let $G$ be a simple Lie group, and let $\widetilde{\Omega G}$ be a basic central extension of $\Omega G$, i.e., one whose classifying cocycle  $\overline{\omega}$ is a generator of $H^2(\Omega \mathfrak{g}, \Z) \cong \Z$. Let 
$\mathcal{G} = \mathcal{G}(X(\widetilde{\Omega G}))$
be the  Lie 2-group corresponding to the crossed module constructed in \cref{SectionCrossedModulesFromCExt}.
Then,  $\pi_3(|\mathcal{G}|) = \pi_2(|\mathcal{G}|) = 0$.
In particular, if $G$ is simple and simply connected, then $|\mathcal{G}|$ is the 3-connected cover of $G$.
\end{theorem}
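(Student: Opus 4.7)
The plan is to invoke the lemma immediately preceding the theorem, which already provides the exact sequence
\begin{equation*}
0 \longrightarrow \pi_3(|\mathcal{G}|) \longrightarrow \pi_3(G) \stackrel{\varphi}{\longrightarrow} \Z \longrightarrow \pi_2(|\mathcal{G}|) \longrightarrow 0
\end{equation*}
and identifies $\varphi$ as $f \mapsto \langle \hat{f}^*\overline{\omega}, [S^2]\rangle$, where $\hat f \in \pi_2(\Omega G)$ corresponds to $f \in \pi_3(G)$ under the loop-space adjunction. The entire content of the theorem then reduces to showing that $\varphi$ is an isomorphism of infinite cyclic groups, and then bookkeeping of homotopy groups in the remaining degrees.

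To establish that $\varphi$ is an isomorphism, I would unravel it through standard identifications. Since $\pi_1(\Omega G) = \pi_2(G) = 0$, the identity component of $\Omega G$ is simply connected, so the Hurewicz map gives an isomorphism $\pi_2(\Omega G) \cong H_2(\Omega G, \Z)$. Composed with the transgression isomorphism $\pi_3(G) \cong \pi_2(\Omega G)$, the map $\varphi$ becomes the Kronecker pairing with $[\overline{\omega}] \in H^2(\Omega G, \Z)$. For any simple Lie group $G$ one has $\pi_3(G) \cong \Z$ (since this is true for its universal cover and $\pi_3$ is unchanged by covering maps), hence $\pi_2(\Omega G) \cong \Z$, and by universal coefficients $H^2(\Omega G, \Z) \cong \Z$ with a perfect Kronecker pairing. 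The basic hypothesis on $\widetilde{\Omega G}$ means precisely that $[\overline{\omega}]$ is a generator of this group, so $\varphi$ is multiplication by $\pm 1$, hence an isomorphism. The exact sequence then immediately yields $\pi_2(|\mathcal{G}|) = \pi_3(|\mathcal{G}|) = 0$.

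To conclude, I would combine this with the observation already recorded before the preceding lemma, namely that $|\mathcal{G}| \to G$ induces isomorphisms on $\pi_k$ for all $k \notin \{2, 3\}$, coming from the long exact sequence associated to the homotopy fibre sequence $K(\Z,2) \to |\mathcal{G}| \to G$. If $G$ is additionally simply connected, then $\pi_0(G) = \pi_1(G) = 0$, and $\pi_2(G) = 0$ holds for any finite-dimensional Lie group, so $|\mathcal{G}|$ is $3$-connected, while the projection induces isomorphisms on $\pi_k$ for $k \geq 4$. This is exactly the statement that $|\mathcal{G}| \to G$ is the $3$-connected cover of $G$, and for $G = \Spin(d)$ this realizes $|\mathcal{G}|$ as a model for $\String(d)$.

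I expect the main obstacle to be bookkeeping the identifications of the various groups of rank one: matching the Lie-algebraic class in $H^2(\Omega\mathfrak{g},\Z)$ (appearing in the definition of \emph{basic}) with the integral cohomology class $[\overline{\omega}] \in H^2(\Omega G,\Z)$ (the first Chern class of the $\U(1)$-bundle $\widetilde{\Omega G}\to \Omega G$) and confirming that the Kronecker pairing of this generator with the generator of $\pi_2(\Omega G)$ yields $\pm 1$. Once these identifications are set up consistently, the remainder is just a diagram chase in the exact sequence.
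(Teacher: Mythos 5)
Your proposal follows exactly the paper's route: the theorem is stated there as a summary of the section, so its proof consists precisely of combining the exact sequence \cref{ExactSequence}, the preceding lemma identifying $\varphi$, and the observation that pairing the generator $[\overline{\omega}]$ of $H^2(\Omega G,\Z)\cong\Z$ against the Hurewicz image of a generator of $\pi_2(\Omega G)\cong\pi_3(G)\cong\Z$ yields $\pm 1$ --- which is what you do. The only slight overreach is your claim that $\pi_3(G)\cong\Z$ holds for \emph{every} simple Lie group together with the justification via the universal cover (this fails, e.g., for $\SL(2,\R)$, whose universal cover is contractible); but in the situation of the theorem this isomorphism is already forced by the hypothesis $H^2(\Omega\mathfrak{g},\Z)\cong\Z$ via Hurewicz and universal coefficients on the simply connected identity component of $\Omega G$, and the paper makes the same implicit assumption.
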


\section{Comparison with other constructions}

In this section we carry out the comparison between our constructions of  \cref{section-3} and the constructions of Baez et al.\ and the second-named author. 

\subsection{The BCSS string 2-group}

\label{Comparison-with-BCSS}

We start be reviewing the main construction of Baez et al.\ \cite[Prop. 25]{BCSS}. 
We remark that their construction is presented as if it results into as that of a Fr\'echet Lie 2-group, but in fact it results into a crossed module of Fr\'echet Lie groups, to which then the functor $\mathcal{G}$ from \cref{AdjunctionLieCross} is applied without mention. 
So we better describe that crossed module directly.  

Let $P_eG\BCSS \subset C^{\infty}([0,  2\pi], G)$ be the Fr\'echet submanifold of paths starting at $e\in G$. Note that -- in contrast to our setting -- there is no flatness assumption; moreover,  paths are parameterized by $[0, 2\pi]$ instead of $[0, \pi]$. 
We denote by $\Omega G\BCSS \subset P_eG\BCSS$ the Fr\'echet manifold of closed paths,  and assume that 
\begin{equation*}
1 \to \U(1) \to \widetilde{\Omega G}\BCSS \to \Omega G\BCSS \to 1
\end{equation*}
is a central extension.
A Lie group homomorphism 
\begin{equation*}
t\BCSS:\widetilde{\Omega G}\BCSS \to P_eG\BCSS
\end{equation*}
is defined by projection and inclusion. Under certain assumptions on the central extension,  including the condition that $G$ is of Cartan type and classified by a level $k\in \Z$,  a central crossed module action
\begin{equation*}
\alpha\BCSS: P_eG\BCSS \times  \widetilde{\Omega G}\BCSS \to \widetilde{\Omega G}\BCSS
\end{equation*} 
can be defined (in a difficult way,  using Lie-algebraic methods). It  will not be necessary to review this construction here,  as we will prove below that it restricts to our canonical action. We denote the crossed module defined this way by $X\BCSS(G, k)$; it is precisely the one described in \cite[Prop.\ 25]{BCSS}.

In the following we will show that $X\BCSS(G, k)$ is weakly equivalent to our canonical crossed module $X(\widetilde{\Omega G})$ from \cref{CanonicalCrossedModule}.
In order to do so,  we first have to specify the disjoint commutative central extension $\widetilde{\Omega G}$ required there. 
We consider the maps
\begin{equation*}
\xymatrix{\Omega_{(0, \pi)}G \ar[r]^-{\res}  & P_eG \ar[r]^-{\rep} & P_eG\BCSS}
\end{equation*}
defined by $\rep(\gamma)(x) := \gamma(\frac{1}{2}x)$,  for $\gamma\in P_eG$, $x\in [0, 2\pi]$,  and $\res(\eta)(x) := \eta(x)$,  for $\eta\in \Omega_{(0, \pi)}G$ and $x\in [0, \pi]$. Note that $\rep$ and $\res$ are Lie group homomorphisms. Their composition will be denoted by $r := \rep\circ \res$. We let 
\begin{equation*}
\widetilde{\Omega_{(0, \pi)} G} := r^{*}\widetilde{\Omega G}\BCSS
\end{equation*}
be the pullback central extension. By \cref{LemmaRestrictionIso},  this is the restriction of a central extension $\widetilde{\Omega G}$,  as required. 
Note that $\widetilde{\Omega G}$ is disjoint commutative since $G$ is semisimple and simply connected,  due to \cref{CorollarySemisimpleSimplyConnectedDisjointCommutative}.
We obtain -- by construction -- a commutative diagram:
\begin{equation*}
\xymatrix{\widetilde{\Omega_{(0, \pi)} G} \ar@/_3pc/[dd]_{t} \ar[d]_{p} \ar[r]^-{\tilde r} & \widetilde{\Omega G}\BCSS \ar@/^3pc/[dd]^{t\BCSS} \ar[d]\\ \Omega_{(0, \pi) G} \ar[d]_{\res} \ar[r]_{r} & \Omega G\BCSS \ar@{^(->}[d] \\ P_eG \ar[r]_-{\rep} & P_eG\BCSS }
\end{equation*}

\begin{lemma}
\label{stricthomomorphismR}
The maps $\tilde r$ and $\rep$ constitute a strict homomorphism 
\begin{equation*}
R:X(\widetilde{\Omega G}) \longrightarrow X\BCSS(G, k)
\end{equation*}
of crossed modules. 
\end{lemma}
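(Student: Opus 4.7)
The plan is to verify the three conditions that define a strict morphism of crossed modules: (i) $\tilde r$ and $\rep$ are Lie group homomorphisms; (ii) the target square $t\BCSS \circ \tilde r = \rep \circ t$ commutes; and (iii) $\tilde r$ intertwines the two crossed-module actions, i.e., $\tilde r(\alpha_\gamma(\Phi)) = \alpha\BCSS_{\rep(\gamma)}(\tilde r(\Phi))$ for all $\gamma \in P_eG$ and $\Phi \in \widetilde{\Omega_{(0,\pi)}G}$. Conditions (i) and (ii) are essentially bookkeeping: $\rep$ is a homomorphism because group multiplication on path spaces is pointwise and unaffected by re-parametrization; $\tilde r$ is the canonical pullback projection $r^{*}\widetilde{\Omega G}\BCSS \to \widetilde{\Omega G}\BCSS$ and is therefore a group homomorphism covering $r$; and the commutativity of the square is precisely the diagram displayed immediately before the lemma (for $\Phi$ over $\eta$, both sides equal $r(\eta)$, viewed as an element of $\Omega G\BCSS \subset P_eG\BCSS$).

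The substantive step is (iii), which I plan to attack through the uniqueness statement \cref{ThmUniqueAction}. First observe that $\tilde r$ is a fibrewise $\U(1)$-bijection onto the restriction $\widetilde{\Omega G}\BCSS|_{r(\Omega_{(0,\pi)}G)}$, and in particular is injective. For $\gamma \in P_eG$ and $\Phi \in \widetilde{\Omega_{(0,\pi)}G}$ over $\eta$, the $t\BCSS$-equivariance of $\alpha\BCSS$ forces $\alpha\BCSS_{\rep(\gamma)}(\tilde r(\Phi))$ to project to the pointwise conjugate $\rep(\gamma) \cdot r(\eta) \cdot \rep(\gamma)^{-1} \in \Omega G\BCSS$; a direct computation identifies this loop with $r(\eta')$, where $\eta'(y) := \gamma(y)\eta(y)\gamma(y)^{-1}$. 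One checks that $\eta' \in \Omega_{(0,\pi)}G$, since its support remains inside $(0,\pi)$ and flatness at $0$ and $\pi$ is preserved (because $\eta$ is flat there). Consequently $\alpha\BCSS_{\rep(\gamma)}(\tilde r(\Phi))$ lies in the image of $\tilde r$, and setting
\begin{equation*}
  \alpha'_\gamma(\Phi) := \tilde r^{-1}\bigl(\alpha\BCSS_{\rep(\gamma)}(\tilde r(\Phi))\bigr)
\end{equation*}
defines a smooth action of $P_eG$ on $\widetilde{\Omega_{(0,\pi)}G}$, smoothness and the group law being inherited from those of $\alpha\BCSS$ and $\rep$ together with the fact that $\tilde r$ is a diffeomorphism onto its image.

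It remains to confirm that $\alpha'$ is a central crossed-module action for $t: \widetilde{\Omega_{(0,\pi)}G} \to P_eG$; once this is secured, the hypotheses of \cref{ThmUniqueAction} are met ($G$ is of Cartan type, hence simple, semisimple and simply connected, and $\widetilde{\Omega G}$ is disjoint commutative by \cref{CorollarySemisimpleSimplyConnectedDisjointCommutative}), so $\alpha' = \alpha$, and applying $\tilde r$ gives the intertwining relation. The homomorphism and $t$-equivariance axioms transfer from $\alpha\BCSS$ to $\alpha'$ through injectivity of $\tilde r$ together with step (ii); centrality on $\U(1)$ is inherited from $\alpha\BCSS$; and the Peiffer identity for $\alpha'$ reduces to that for $\alpha\BCSS$ via $\rep(t(\Phi)) = t\BCSS(\tilde r(\Phi))$ and the chain
\begin{equation*}
\alpha\BCSS_{t\BCSS(\tilde r(\Phi))}(\tilde r(\Psi)) = \tilde r(\Phi)\tilde r(\Psi)\tilde r(\Phi)^{-1} = \tilde r(\Phi\Psi\Phi^{-1}).
\end{equation*}
The main technical obstacle is thus the well-definedness of $\alpha'$, namely that $\alpha\BCSS_{\rep(\gamma)}$ preserves the subbundle $\tilde r(\widetilde{\Omega_{(0,\pi)}G}) \subset \widetilde{\Omega G}\BCSS$; but as explained above, this reduces by $t\BCSS$-equivariance to stability of $r(\Omega_{(0,\pi)}G)$ under pointwise conjugation by $\rep(\gamma)$, which in turn is a short check on preservation of support and flatness.
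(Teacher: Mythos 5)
Your proposal is correct and follows essentially the same route as the paper: both define a transported action $\alpha'_\gamma(\Phi)=\tilde r^{-1}\bigl(\alpha\BCSS_{\rep(\gamma)}(\tilde r(\Phi))\bigr)$ (the paper writes this as the pair $(\eta(\gamma,\Phi),\alpha\BCSS_{\rep(\gamma)}(\tilde r(\Phi)))$ in the pullback), verify via the $t\BCSS$-equivariance of $\alpha\BCSS$ that the conjugated loop lies in $r(\Omega_{(0,\pi)}G)$ so that this is well defined, check that $\alpha'$ is a smooth central crossed-module action, and then invoke \cref{ThmUniqueAction} to conclude $\alpha'=\alpha$. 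Your write-up merely supplies a bit more detail than the paper on why the crossed-module axioms transfer along the injective homomorphism $\tilde r$.
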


\begin{proof}
Since the diagram is commutative,  it remains to prove that the crossed module actions are exchanged,  i.e.,  that
\begin{equation}
\label{coincidence-of-actions}
\alpha\BCSS_{\rep(\gamma)}(\tilde r(\Phi))=\tilde r(\alpha_{\gamma}(\Phi))
\end{equation}
for all $\gamma\in P_eG$ and $\Phi \in \widetilde{\Omega_{(0, \pi)} G}$. We note that
\begin{align*}
t\BCSS(\alpha\BCSS_{\rep(\gamma)}(\tilde r(\Phi))) &=\rep(\gamma)\cdot t\BCSS(\tilde r(\Phi))) \cdot \rep(\gamma)^{-1}
\\ &=\rep(\gamma)\cdot \rep(t(\Phi)) \cdot \rep(\gamma)^{-1}
\\ &=\rep(\gamma\cdot t(\Phi) \cdot \gamma^{-1})
\\ &= r(\eta(\gamma, \Phi))\text{, } 
\end{align*}
where $\eta(\gamma, \Phi) :=(\gamma\cdot t(\Phi) \cdot \gamma^{-1}) \cup \const_e \in \Omega_{(0, \pi)}G$. This shows that we obtain a well-defined element
\begin{equation*}
\alpha_{\gamma}(\Phi):=(\eta(\gamma, \Phi), \alpha\BCSS_{\rep(\gamma)}( \tilde r(\Phi)))\in\widetilde{\Omega_{(0, \pi)} G}\text{.}
\end{equation*}
The map $\alpha_{\gamma}$ defined like this is a smooth,  central crossed module action for $t: \widetilde{\Omega G} \to P_eG$; moreover,  by construction,  it satisfies \cref{coincidence-of-actions}. Since $G$ is semisimple,  it coincides with our canonical action  by \cref{ThmUniqueAction}. 
\end{proof}

We may thus say that our canonical action $\alpha$ is the restriction of the action $\alpha\BCSS$ along the homomorphism $R$. 

\begin{theorem}
\label{equivalence-to-BCSS}
The homomorphism $R$ of \cref{stricthomomorphismR} establishes a weak equivalence of  crossed modules of Fr\'echet Lie groups, 
\begin{equation*}
X(\widetilde{\Omega G}) \cong X\BCSS(G, k).
\end{equation*}
\end{theorem}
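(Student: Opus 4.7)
The plan is to prove the weak equivalence by verifying the standard criterion: a strict morphism of crossed modules of Fr\'echet Lie groups is a weak equivalence if and only if it induces isomorphisms on both groups $\pi_0 := \coker(t)$ and $\pi_1 := \ker(t)$. Under the adjunction \cref{AdjunctionLieCross} this corresponds precisely to $\mathcal{G}(R)$ being a weak equivalence of Lie 2-groups in the usual 2-categorical sense.

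For the $\pi_1$ level, I observe that on both sides $\ker(t)$ coincides with the central $\U(1)$: if $t(\Phi) = \const_e$ (respectively $t\BCSS(\Phi) = \const_e$), then $\Phi$ projects to the constant loop and hence lies in the central circle of the corresponding central extension. Because $\widetilde{\Omega_{(0,\pi)} G}$ was defined as the pullback $r^*\widetilde{\Omega G}\BCSS$, the homomorphism $\tilde r$ is by construction a morphism of central extensions, and in particular restricts to the identity on $\U(1) \subset \widetilde{\Omega_{(0,\pi)}G}$. Hence the induced map on $\pi_1$ is $\id_{\U(1)}$.

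For the $\pi_0$ level, endpoint evaluation fits into short exact sequences
\begin{equation*}
\Omega_{(0,\pi)} G \longrightarrow P_e G \stackrel{\mathrm{ev}_\pi}{\longrightarrow} G
\quad \text{and} \quad
\Omega G\BCSS \longrightarrow P_e G\BCSS \stackrel{\mathrm{ev}_{2\pi}}{\longrightarrow} G
\end{equation*}
of Fr\'echet Lie groups. The image of $t$ coincides with $\Omega_{(0,\pi)} G$: indeed, by the computation in \cref{SourceTargetMaps}, $t(\Phi) = \pi(\Phi)|_{[0,\pi]}$ for $\Phi \in \widetilde{\Omega_{(0,\pi)}G}$, and these are exactly the paths in $P_e G$ that return to $e$ at time $\pi$. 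Analogously, the image of $t\BCSS$ is $\Omega G\BCSS$. Thus both cokernels are canonically identified with $G$ via endpoint evaluation. Since $\rep(\gamma)(2\pi) = \gamma(\pi)$ by definition of $\rep$, the map $\rep$ intertwines the two endpoint evaluations, and so the induced map on $\pi_0$ is $\id_G$.

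The main technical point is ensuring that the two cokernels are really isomorphic to $G$ \emph{as Fr\'echet Lie groups}, which requires $\mathrm{ev}_\pi$ and $\mathrm{ev}_{2\pi}$ to be surjective submersions with the images of $t$ and $t\BCSS$ as their respective kernels. Surjectivity follows from connectedness of $G$, while smooth local sections can be built from any chart near $g \in G$ by smoothly choosing a flat path from $e$ ending at the corresponding point (using regularity as in \cref{RemarkRegular}). With these identifications in place, the two level-wise isomorphisms computed above combine to give the desired weak equivalence.
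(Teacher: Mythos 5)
Your proposal is correct in substance but follows a different route from the paper. The paper does not compute $\pi_0$ and $\pi_1$ directly; instead it forms the butterfly associated to the strict homomorphism $R$ (following Aldrovandi--Noohi) and invokes the criterion that a butterfly is a weak equivalence if and only if it is \emph{reversible}, i.e.\ its NW-SE sequence
\begin{equation*}
\widetilde{\Omega_{(0,\pi)}G} \stackrel{\kappa}{\longrightarrow} \widetilde{\Omega G}\BCSS \rtimes P_eG \stackrel{j}{\longrightarrow} P_eG\BCSS
\end{equation*}
is short exact as a sequence of groups \emph{and} $j$ is a locally trivial principal bundle. Unwinding this, injectivity of $\kappa$ is your injectivity of $\pi_1(R)$, surjectivity of $j$ is your surjectivity of $\pi_0(R)$, exactness in the middle encodes injectivity of $\pi_0(R)$ together with surjectivity of $\pi_1(R)$, and the local sections of $j$ are supplied exactly by your observation that the endpoint evaluations admit smooth local sections (the paper builds them from a fixed reparametrization $\varphi$ and local sections of the central extension; your construction via a local section of $\ev_\pi$ composed with a lift of $\rep(g)^{-1}\gamma$ works equally well). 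So the two arguments verify the same list of facts; yours is arguably more transparent about \emph{why} the map is an equivalence, while the paper's has the advantage that the reversibility criterion is an already-established theorem in the Fr\'echet/topological setting.

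The one soft spot in your write-up is the opening sentence: the assertion that a strict morphism of crossed modules of \emph{Fr\'echet Lie} groups is a weak equivalence if and only if it induces isomorphisms on $\ker(t)$ and $\coker(t)$ is not true as stated --- in the smooth category the purely group-theoretic criterion must be supplemented by the local-triviality conditions you mention only at the end, and ``weak equivalence'' itself needs a definition (the paper's is via reversible butterflies). You do supply the needed smooth input, but to make the argument citable you should either reduce explicitly to the reversibility of the associated butterfly (as the paper does) or quote a version of the $\pi_0$/$\pi_1$ criterion that is stated for Fr\'echet Lie (or topological) crossed modules with the submersion hypotheses built in. Two small points worth making explicit in your $\pi_0$ computation: the identification $\im(t)=\ker(\ev_\pi)$ uses that elements of $P_eG$ are flat at $\pi$, so that a path ending at $e$ genuinely extends by the constant to a smooth loop in $\Omega_{(0,\pi)}G$; and both identifications of images use that the bundle projections $\widetilde{\Omega_{(0,\pi)}G}\to\Omega_{(0,\pi)}G$ and $\widetilde{\Omega G}\BCSS\to\Omega G\BCSS$ are surjective.
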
  

\begin{proof}
Every strict homomorphism between crossed modules determines a weak homomorphism,  a.k.a.\ a butterfly,  see \cite[\S 4.5]{Aldrovandi2009}. In the case of $R$,  this butterfly is
\begin{equation*}
\xymatrix{\widetilde{\Omega_{(0, \pi)}G} \ar[dr]^{\kappa}\ar[dd]_{t} && \widetilde{\Omega G}\BCSS \ar[dl]\ar[dd]^{t\BCSS} \\ & \widetilde{\Omega G}\BCSS \!\!\!\!\! \rtimes P_eG \ar[dl]\ar[dr]^{j} \\ P_eG &&  P_eG\BCSS }
\end{equation*}
where the group in the middle is the semi-direct product w.r.t.\ the action $\alpha\BCSS$ induced along $\rep: P_eG \to P_eG\BCSS$,  
and the NE-SW-sequence is the corresponding split extension. Moreover,  
\begin{equation*}
\begin{aligned}
\kappa(\Phi) &:=(\tilde r(\Phi)^{-1},  t(\Phi)) \qquad\text{and}\qquad  \\
  j(\Phi, \gamma) &:= \rep(\gamma)\cdot t\BCSS(\Phi).
  \end{aligned}
  \end{equation*}
By \cite[\S 5.2]{Aldrovandi2009},  a butterfly establishes a weak equivalence if it is reversible,  meaning that its NW-SE-sequence
\begin{equation*}
\widetilde{\Omega_{(0, \pi)}G} \stackrel\kappa\longrightarrow \widetilde{\Omega G}\BCSS \!\!\!\!\! \rtimes P_eG \stackrel j\longrightarrow P_eG\BCSS
\end{equation*}
is also short exact. Since that sequence is always a complex (for any butterfly),  it remains to prove that it is an exact sequence of groups and a locally trivial principal bundle.

Since $r$ is injective,  the covering map $\tilde r$ is also injective,  and hence $\kappa$ is injective. In order to show the surjectivity of $j$,  we consider  $\gamma\in P_eG^{\BCSS}$ and choose a smooth map $\varphi: [0, 2\pi] \to [0, \pi]$ with $\varphi(0)=0$ and $\varphi(2\pi)=\pi$ that is flat at its end points. 
Then, for any lift $\Phi\in \widetilde{\Omega G}\BCSS$ of $\gamma\cdot \rep(\gamma\circ \varphi)^{-1} \in \Omega G\BCSS$, we have $j(\Phi, \gamma\circ \varphi)=\gamma$, hence $j$ is surjective.
The fact that $\varphi$ can be chosen to be the same for all $\gamma\in P_eG\BCSS$ and the fact that $\Phi$ can be chosen in a locally smooth way shows that $j$ has local sections,  and hence is a principal bundle. 

It remains to show exactness in the middle. Let $(\Phi, \gamma) \in \widetilde{\Omega G}\BCSS \rtimes P_eG$ be in the kernel of $j$,  i.e.,  $\rep(\gamma)\cdot t\BCSS(\Phi)=\const_e$. 
Then 
\begin{equation*}
(\gamma \cup \const_e, \Phi^{-1}) \in \widetilde{\Omega_{(0, \pi)} G}
\end{equation*}
 is sent to $(\Phi, \gamma)$ under $\kappa$.
\end{proof}

\subsection{The diffeological string 2-group}

\label{diffeological-model}

The following construction of a diffeological 2-group is implicit in \cite{WaldorfStringGroupModels,WaldorfString, WaldorfTransgressive},  but has not been described explicitly.  It takes as input data  a \emph{fusion extension},  i.e.   central extension 
\begin{equation}
        \label{eq:bce}
        1\to\U(1) \to \widetilde{LG} \to LG\to 1
\end{equation}
of Fr\'echet Lie groups that is equipped with a \emph{multiplicative fusion product}.

In the following we use without further notice the fully faithful functor from Fr\'echet manifolds to diffeological spaces in order to embed everything into that setting.
We let $P_eG\si$ be the diffeological space of paths in $G$ with sitting instants (constant in neighborhoods of its end points) starting at $e\in G$,  and by $P_eG\si ^{[k]}$ its $k$-fold fibre products along the endpoint evaluation $\ev: P_eG\si \to G$. As before,  we have a smooth map $\cup: P_eG\si^{[2]} \to LG$. A \emph{fusion product} is a bundle morphism
\begin{equation*}
\lambda: \pr_{12}^{*}\cup^{*}\widetilde{LG} \otimes \pr_{23}^{*}\cup^{*}\widetilde{LG} 
\longrightarrow
 \pr_{13}^{*}\cup^{*}\widetilde{LG}
\end{equation*}  
over $P_eG\si^{[2]}$ that satisfies the evident associativity condition over $P_eG\si^{[4]}$. Moreover,  it is called \emph{multiplicative} if it is a group homomorphism,  see \cite{WaldorfStringGroupModels,WaldorfString, WaldorfTransgressive} for more details.

\begin{remark}
Fusion extensions may -- on first view -- look odd and involved,  but in fact  appear very naturally.
Indeed,  there are at least the following three ways to obtain a fusion extension of the loop group $LG$ of a Lie group $G$:
\begin{enumerate}[(1)]

        \item 
                Transgression of any multiplicative bundle gerbe over $G$ results in a fusion extension of $LG$; this is explained in  \cite[\S 2]{WaldorfString}. 

        \item
                The \emph{Mickelsson model} produces a canonical fusion extension for any simply connected Lie group $G$; this is explained  in \cite[Example 2.6]{WaldorfTransgressive}. 

        \item
                The operator-algebraic  \emph{implementer model} \cite{KristelWaldorf1} produces a canonical fusion extension for $L\Spin(d)$.

\end{enumerate}
\end{remark}

We note that every fusion extension comes equipped with a  \emph{fusion factorization},   uniquely characterized by the property that is neutral with respect to fusion \cite[Prop. 3.1.1]{WaldorfTransgressive}. 
The following result,  which is nothing but a reformulation of the given conditions,  constructs from a fusion extension  a strict diffeological 2-group.

\begin{proposition}\label{lem:2groupFromFusionExtension}
Given a fusion extension as above,  the following structure yields a central strict diffeological 2-group $\mathcal{S}(\widetilde{LG}, \lambda)$:

        \begin{itemize}

                \item 
                        The diffeological group of objects is $P_eG\si$.

                \item
                        The diffeological group of morphisms is 
                        \begin{equation*}
                                \widetilde{\Omega G}{}\diff := P_eG\si^{[2]} \times_{LG} \widetilde{LG}\text{, }
                        \end{equation*}
                        where the fibre product is taken along the map $\cup: P_eG\si^{[2]} \to LG$.

                        \item
                        Source and target maps are $s(\gamma_1, \gamma_2, \Phi) := \gamma_2$ and $t(\gamma_1, \gamma_2, \Phi) := \gamma_1$.

                \item
                        Composition is the fusion product $\lambda$ of $\widetilde{LG}$:
                        \begin{equation*}
                                (\gamma_0, \gamma_1, \Phi')\circ (\gamma_1, \gamma_2, \Phi) := (\gamma_0, \gamma_2, \lambda(\Phi' \otimes \Phi))\text{.}
                        \end{equation*}

                \item
                        The identity morphism of $\gamma\in P_eG\si$ is $(\gamma, \gamma, i(\gamma))$,  where $i$ is the fusion factorization associated to $\lambda$.

        \end{itemize}
\end{proposition}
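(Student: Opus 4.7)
The plan is to verify directly each axiom of a strict diffeological 2-group, matching it to one of the defining conditions of a fusion extension: being a central extension gives a group structure on $\widetilde{\Omega G}{}\diff$; associativity of the fusion product $\lambda$ gives associativity of composition; neutrality of the fusion factorization $i$ gives the unit laws; and multiplicativity of $\lambda$ gives the interchange law that makes composition a group homomorphism. Since composition here is specified by $\lambda$ directly, rather than derived from commutativity of kernels, I would not attempt to use an analog of \cref{LemmaMinimalData2Group}, but instead check everything by hand.

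I would begin by checking that $\widetilde{\Omega G}{}\diff = P_eG\si^{[2]} \times_{LG} \widetilde{LG}$ carries the structure of a diffeological group under the product inherited pointwise from $P_eG\si^{[2]}$ and $\widetilde{LG}$; well-definedness of this fibre product group structure uses that $\cup: P_eG\si^{[2]} \to LG$ is a group homomorphism. The source and target maps are projections, hence smooth group homomorphisms, and the identity map $\gamma \mapsto (\gamma, \gamma, i(\gamma))$ is a group homomorphism because the fusion factorization $i$ is one by definition. The equations $s \circ i = \id = t \circ i$ are immediate from the formulas.

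For the groupoid axioms, associativity of composition follows directly from the associativity axiom of $\lambda$ over $P_eG\si^{[4]}$, while the unit laws amount to the statement that $i$ is neutral with respect to $\lambda$, which is precisely the characterizing property of the fusion factorization recalled just before the proposition. Inverses in the groupoid are obtained by swapping source and target, together with the appropriate inverse lift with respect to the fusion product; the corresponding maps are smooth because $\widetilde{LG}$ is a principal $\U(1)$-bundle and $\lambda$ is smooth.

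The main remaining axiom, and the crux of the proof, is that composition is a group homomorphism, i.e., the interchange law
\begin{equation*}
(x_1 \cdot x_2) \circ (y_1 \cdot y_2) = (x_1 \circ y_1) \cdot (x_2 \circ y_2)
\end{equation*}
for composable pairs $(x_1, y_1)$ and $(x_2, y_2)$ in $\widetilde{\Omega G}{}\diff$. Unwinding the definitions, this reduces exactly to the condition that $\lambda$ intertwines the pointwise multiplication on (the tensor product of pullbacks of) $\widetilde{LG}$, which is precisely the multiplicativity assumption on the fusion product. Finally, centrality of the 2-group is clear from the fact that $\U(1) \subset \widetilde{LG}$ embeds centrally into $\widetilde{\Omega G}{}\diff$ via $z \mapsto (\const_e, \const_e, z)$, so that $\ker(s) \cap \ker(t)$ is central.
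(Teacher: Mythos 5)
Your proposal is correct and matches the paper's treatment: the paper offers no separate argument, introducing the proposition as ``nothing but a reformulation of the given conditions,'' and your axiom-by-axiom verification (associativity of $\lambda$ gives associativity of composition, neutrality of $i$ gives the unit laws, multiplicativity of $\lambda$ gives the interchange law) is precisely the unwinding the paper has in mind. Your choice to check everything by hand rather than invoke an analog of \cref{LemmaMinimalData2Group} is harmless, since in the diffeological category the required fibre products exist automatically.
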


\begin{remark}
It is easy to check that $\pi_1\mathcal{S}(\widetilde{LG}, \lambda) =\U(1)$ and $\pi_0\mathcal{S}(\widetilde{LG}, \lambda) \cong G$,  so that $\mathcal{S}(\widetilde{LG}, \lambda)$ is a diffeological Lie 2-group extension
\begin{equation*}
B\U(1) \longrightarrow \mathcal{S}(\widetilde{LG}, \lambda)\longrightarrow G\dis\text{.}
\end{equation*}  
\end{remark}

\begin{remark}
As noticed in \cite[\S 5.2]{KristelWaldorf1} and  deduced in general in \cref{SectionStrict2GroupsCrossedModules},  the fusion product $\lambda$ is  already determined by its fusion factorization $i$; moreover,  the subgroups 
\begin{equation*}
\widetilde{\Omega_{(0, \pi)}G}{}\diff =\mathrm{ker}(s) \subset \widetilde{\Omega G}{}\diff \qquad \text{and} \qquad \widetilde{\Omega_{(\pi, 2\pi)}G}{}\diff =\mathrm{ker}(t) \subset \widetilde{\Omega G}{}\diff
\end{equation*}
 commute with each other. 
\end{remark}

The goal of this section is to compare the diffeological Lie 2-group $\mathcal{S}(\widetilde{LG}, \lambda)$ with our constructions from \cref{section-3},  and it is best to do this on the level of crossed modules. The diffeological crossed module $\mathcal{X}(\mathcal{S}(\widetilde{LG}, \lambda))$ is
\begin{equation*}
t:\widetilde{\Omega_{(0, \pi)}G}{}\diff \to P_eG\si\text{, }
\end{equation*}
with the central crossed module action
\begin{equation*}
\alpha\diff: P_eG\si \times\widetilde{\Omega_{(0, \pi)}G}{}\diff \to \widetilde{\Omega_{(0, \pi)}G}{}\diff 
\end{equation*}  
given by $\alpha\diff(\gamma, \Phi):= i(\gamma)\cdot\Phi\cdot i(\gamma)^{-1}$. 

\begin{remark}
As in \cref{SectionCrossedModulesFromCExt},  we can observe here immediately that this action does not even depend on the fusion factorization,  and hence,  that the crossed module $\mathcal{X}(\mathcal{S}(\widetilde{LG}, \lambda))$ is completely independent of the fusion product $\lambda$. 
However,  the condition that the subgroups $\widetilde{L_{(0, \pi)}G}$ and $\widetilde{L_{(\pi, 2\pi)}}G$ commute has to be imposed (it is slightly weaker than disjoint commutativity). 
\end{remark}

In order to explore the relation between the diffeological crossed module $\mathcal{X}(\mathcal{S}(\widetilde{LG}, \lambda))$ and our  crossed module $X(\widetilde{\Omega G})$ from \cref{SectionCrossedModulesFromCExt},  we assume that $\widetilde{LG}$ is a disjoint commutative central extension of a Lie group $G$; then,  both crossed modules are defined. 
We obtain a commutative diagram
\begin{equation*}
\xymatrix{ \widetilde{\Omega_{(0, \pi)}G}{}\diff\ar@{^(->}[r] \ar[d]_{t} & \widetilde {\Omega_{(0, \pi)}G} \ar[d]^{t} \\ P_eG\si \ar@{^(->}[r]  & P_eG }
\end{equation*}
whose horizontal arrows are inclusions (paths with sitting instants are flat). Moreover,  we observe that the action $\alpha\diff$ and or canonical action $\alpha$ are defined in exactly the same way. Hence,  above diagram constitutes a strict homomorphism of diffeological crossed modules
\begin{equation}
\label{inclusion-of-diffeological-model}
\mathcal{X}(\mathcal{S}(\widetilde{LG}, \lambda)) \to X(\widetilde{\Omega G})\text{.} 
\end{equation}

\begin{theorem}
\label{weak-equivalence-diffeological}
The homomorphism \cref{inclusion-of-diffeological-model} is a weak equivalence, 
\begin{equation*}
\mathcal{X}(\mathcal{S}(\widetilde{LG}, \lambda)) \cong X(\widetilde{\Omega G})\text{.}
\end{equation*}
In particular,  there is a canonical weak equivalences of diffeological 2-groups
\begin{equation*}
\mathcal{S}(\widetilde{LG}, \lambda) \cong \mathcal{G}(X(\widetilde{\Omega G}))\cong \mathcal{G}(\widetilde{\Omega G}, i)\text{.}
\end{equation*} 
\end{theorem}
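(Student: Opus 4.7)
The plan is to mimic the butterfly argument already used in the proof of \cref{equivalence-to-BCSS}. The strict homomorphism $R:\mathcal{X}(\mathcal{S}(\widetilde{LG}, \lambda)) \to X(\widetilde{\Omega G})$ from \cref{inclusion-of-diffeological-model}, whose two components are inclusions, gives rise to the butterfly
\begin{equation*}
\xymatrix{\widetilde{\Omega_{(0, \pi)}G}{}\diff \ar[dr]^{\kappa}\ar[dd]_{t} && \widetilde{\Omega_{(0, \pi)}G} \ar[dl]\ar[dd]^{t} \\ & \widetilde{\Omega_{(0, \pi)}G} \rtimes P_eG\si \ar[dl]\ar[dr]^{j} \\ P_eG\si && P_eG}
\end{equation*}
in which the semi-direct product uses the canonical action $\alpha$ restricted along the inclusion $P_eG\si \hookrightarrow P_eG$, and where $\kappa(\Phi) := (\Phi^{-1}, t(\Phi))$ and $j(\Psi, \gamma_0) := \gamma_0 \cdot t(\Psi)$. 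By \cite[\S 5.2]{Aldrovandi2009}, it suffices to show that the NW--SE sequence
\begin{equation*}
\widetilde{\Omega_{(0, \pi)}G}{}\diff \stackrel\kappa\longrightarrow \widetilde{\Omega_{(0, \pi)}G} \rtimes P_eG\si \stackrel j\longrightarrow P_eG
\end{equation*}
is short exact as a sequence of diffeological groups and a locally trivial principal bundle.

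Injectivity of $\kappa$ is immediate from injectivity of the inclusion $\widetilde{\Omega_{(0,\pi)}G}{}\diff \hookrightarrow \widetilde{\Omega_{(0,\pi)}G}$. For surjectivity of $j$ together with local sections, I would fix once and for all a smooth reparametrization $\varphi:[0, \pi] \to [0, \pi]$ with $\varphi(0)=0$, $\varphi(\pi)=\pi$, and sitting instants at both endpoints. Given $\gamma \in P_eG$, setting $\gamma_0 := \gamma\circ\varphi \in P_eG\si$ produces a path $\eta := \gamma_0^{-1}\gamma$ that takes the value $e$ near both endpoints and is flat there, so $\eta\cup\const_e$ lies in $\Omega_{(0,\pi)}G$. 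A smooth local lift $\Psi$ of this loop to $\widetilde{\Omega_{(0,\pi)}G}$ then satisfies $j(\Psi, \gamma_0) = \gamma_0\cdot\eta = \gamma$, and the whole construction depends smoothly on $\gamma$. For exactness in the middle, if $(\Psi, \gamma_0)\in\ker(j)$ then $\Psi^{-1}$ projects to $\gamma_0 \cup \const_e$, which is a loop with sitting instants since $\gamma_0$ has them, so $\Psi^{-1} \in \widetilde{\Omega_{(0,\pi)}G}{}\diff$ and one verifies directly that $\kappa(\Psi^{-1})=(\Psi, \gamma_0)$.

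This establishes that $R$ is a weak equivalence of diffeological crossed modules. Applying the adjoint equivalence $\mathcal{G}$ from \cref{AdjunctionLieCross} and composing with the canonical isomorphism $\mathcal{S}(\widetilde{LG}, \lambda) \cong \mathcal{G}\mathcal{X}(\mathcal{S}(\widetilde{LG}, \lambda))$ then gives the first claimed weak equivalence $\mathcal{S}(\widetilde{LG}, \lambda) \cong \mathcal{G}(X(\widetilde{\Omega G}))$, while the second isomorphism $\mathcal{G}(X(\widetilde{\Omega G})) \cong \TwoGroupFusFac{\widetilde{\Omega G}}{i}$ is exactly \cref{Lie-2-group-from-fusion-factorization}. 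The main subtle point I expect is the surjectivity of $j$ together with the existence of diffeologically smooth local sections: it is the sitting-instants reparametrization trick that allows an arbitrary $\gamma \in P_eG$, which is merely flat at the endpoints, to be written smoothly as a product $\gamma_0 \cdot \eta$ with $\gamma_0 \in P_eG\si$ and $\eta \in \Omega_{(0,\pi)}G$, rather than only pointwise.
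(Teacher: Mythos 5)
Your proof is correct and takes essentially the same route as the paper: the identical butterfly with $\kappa(\Phi)=(\Phi^{-1},t(\Phi))$ and $j(\Phi,\gamma_0)=\gamma_0\cdot t(\Phi)$, the reparametrization trick for surjectivity of $j$ with local sections, and the sitting-instants observation for exactness in the middle, followed by applying $\mathcal{G}$ and \cref{Lie-2-group-from-fusion-factorization}. (One harmless slip: your $\eta=\gamma_0^{-1}\gamma$ is not constant equal to $e$ near the endpoints, only equal to $e$ and flat there, which is all that is needed for $\eta\cup\const_e\in\Omega_{(0,\pi)}G$.)
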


\begin{proof}
We proceed as in the proof of \cref{equivalence-to-BCSS} and consider the butterfly
\begin{equation*}
\xymatrix{\widetilde{\Omega_{(0, \pi)}G}{}\diff \ar[dr]^{\kappa}\ar[dd]_{t} && \widetilde{\Omega_{(0, \pi)} G} \ar[dl]\ar[dd]^{t} \\ & \widetilde{\Omega_{(0, \pi)} G} \rtimes P_eG\si \ar[dl]\ar[dr]^{j} \\ P_eG\si &&  P_eG\text{, } }
\end{equation*}
where now $\kappa(\Phi):=(\Phi^{-1},  t(\Phi))$  and $j(\Phi, \gamma):= \gamma t(\Phi)$. We use again \cite[\S 5.2]{Aldrovandi2009} and have to prove that the NW-SE-sequence is short exact.
The proofs that $\kappa$ is injective and that $j$ is surjective and has local sections go as for \cref{equivalence-to-BCSS}. 
For exactness in the middle,  we observe that an equality $\gamma t(\Phi)=\const_e$ implies that $t(\Phi)$ has sitting instants,  and hence $\Phi \in \widetilde{\Omega_{(0, \pi)}G}{}\diff$.   
 \end{proof}

\addcontentsline{toc}{section}{\refname}

\bibliography{literature}
\bibliographystyle{kobib}

\end{document}